\newcommand{\nc}{\newcommand}
\numberwithin{equation}{section}
\newtheorem{thm}{Theorem}[section]
\newtheorem{prop}[thm]{Proposition}
\newtheorem{lem}[thm]{Lemma}
\newtheorem{cor}[thm]{Corollary}
\theoremstyle{definition}
\newtheorem{dfn}[thm]{Definition}
\theoremstyle{remark}
\newtheorem{rem}[thm]{Remark}
\nc{\la}{\lambda}
\nc{\al}{\rho }
\nc{\be}{\beta }
\nc{\ve}{\varepsilon }
\nc{\om}{\omega }
\nc{\hk}{\twoheadrightarrow}
\nc{\msl}{\mathfrak{sl}}
\newcommand{\fb}{{\mathfrak b}}
\newcommand{\fr}{{\mathfrak r}}
\newcommand{\fgg}{{\mathfrak g}}
\newcommand{\Max}{\text{Max}}
\newcommand{\Omax}{\text{Omax}}
\newcommand{\omax}{\text{omax}}
\begin{document}

\title[Symmetric Dellac configurations]
{Symmetric Dellac configurations}

\author{Ange Bigeni}
\address{Ange Bigeni:\newline
Department of Mathematics,\newline
National Research University Higher School of Economics,\newline
Usacheva str. 6, 119048, Moscow, Russia}
\email{ange.bigeni@gmail.com}

\author{Evgeny Feigin}
\address{Evgeny Feigin:\newline
Department of Mathematics,\newline
National Research University Higher School of Economics,\newline
Usacheva str. 6, 119048, Moscow, Russia,\newline
{\it and }\newline
Skolkovo Institute of Science and Technology, Nobelya Ulitsa 3, Moscow 121205, Russia}
\email{evgfeig@gmail.com}

\begin{abstract}
We define symmetric Dellac configurations as the Dellac configurations that are symmetric with respect to their centers. The symmetric Dellac configurations whose lengths are even were previously introduced by Fang and Fourier under the name of symplectic Dellac configurations, to parametrize the torus fixed points of symplectic degenerate flag varieties. In general, symmetric Dellac configurations generate the Poincar\'e polynomials of (odd or even) symplectic or orthogonal versions of
the degenerate flag varieties. In this paper, we give several combinatorial interpretations of the polynomial extensions $(D_n(x))_{n \geq~0}$ of median Euler numbers, defined by Randrianarivony and Zeng, in terms of objects that we name extended Dellac configurations and which generate symmetric Dellac configurations. As a consequence, the cardinalities of the odd and even symmetric Dellac configurations are respectively given by the two adjoining sequences $(l_n)_{n \geq~0} = (1, 1, 3, 21, 267,\dots)$ and $(r_n)_{n \geq~0} = (1,2,10,98,1594,\dots)$, defined as specializations of the polynomials $(D_n(x))_{n \geq~0}$.
\end{abstract}

\maketitle

\section*{Notation}
For a pair of integers $n < m$, the set $\{n,n+1,\dots,m\}$ is denoted by $[n,m]$, and the set $[1,n]$ by $[n]$. If $T$ is a rectangular tableau, then its $j$-th column (from left to right) is denoted by $C_j^T$, and its $i$-th row (from bottom to top) is denoted by $L_i^T$. The intersection $C_j^T \cap L_i^T$ is a box that we denote by $(j:i)$. If this box contains a point $p$, we also write $p = (j:i)$. If it is known than $L_i^T$ contains one unique point, then this point is denoted by $p_i^T$.

\section{Introduction}

Let $N$ be a positive integer. Recall that a Dellac configuration~\cite{De} $D \in DC_N$ is a tableau made of $N$ columns and $2N$ rows, which contains $2N$ points (one per row, two per column) between the lines $y = x$ and $y = n+x$ (in other words, if a box $(j:i)$ of $T$ contains a point, then $j \leq i \leq n+j$). The cardinality of $DC_N$ is $h_N$ where $(h_N)_{N \geq~0} = (1,1,2,7,38,295,\dots)$~\cite{112738} is the sequence of normalized median Genocchi numbers
\cite{BD,B2,B3,B4,F1,F2,HZ1,HZ2,K}. For $D \in DC_N$, let $\text{inv}(D)$ be the number of pairs $(p_1,p_2)$ of points of $D$ such that $p_1$ is located to the left of $p_2$ and above $p_2$ (such a pair is named an \textit{inversion} of $D$). For example, we depict in Figure~\ref{fig:DC123} the elements of $(DC_N)_{N \in [3]}$ (from top to bottom), whose inversions are represented by segments connecting the concerned points.
\begin{figure}[!htbp]

\begin{center}

\begin{tikzpicture}[scale=0.4]

\begin{scope}[yshift=1cm]

\begin{scope}[xshift=11.5cm]
\draw [color=red] (-0.25,-0.25) rectangle (1.25,2.25);

\draw (0,0) grid[step=1] (1,2);
\draw (0,0) -- (1,1);
\draw (0,1) -- (1,2);
\fill (0.5,0.5) circle (0.2);
\fill (0.5,1.5) circle (0.2);
\end{scope}

\end{scope}

\begin{scope}[yshift=-4cm]

\begin{scope}[xshift=12.5cm]

\draw [color=red] (-0.25,-0.25) rectangle (2.25,4.25);

\draw (0,0) grid[step=1] (2,4);
\draw (0,0) -- (2,2);
\draw (0,2) -- (2,4);
\fill (0.5,0.5) circle (0.2);
\fill (0.5,1.5) circle (0.2);
\fill (1.5,2.5) circle (0.2);
\fill (1.5,3.5) circle (0.2);
\end{scope}

\begin{scope}[xshift=9.5cm]

\draw [color=red] (-0.25,-0.25) rectangle (2.25,4.25);

\draw (0,0) grid[step=1] (2,4);
\draw (0,0) -- (2,2);
\draw (0,2) -- (2,4);
\fill (0.5,0.5) circle (0.2);
\fill (1.5,1.5) circle (0.2);
\fill (0.5,2.5) circle (0.2);
\fill (1.5,3.5) circle (0.2);

\draw [line width=0.25mm] (0.5,2.5) -- (1.5,1.5);
\end{scope}

\end{scope}

\begin{scope}[yshift=-11cm]

\begin{scope}[xshift=21cm]

\draw [color=red] (-0.25,-0.25) rectangle (3.25,6.25);

\draw (0,0) grid[step=1] (3,6);
\draw (0,0) -- (3,3);
\draw (0,3) -- (3,6);
\fill (0.5,0.5) circle (0.2);
\fill (0.5,1.5) circle (0.2);
\fill (1.5,2.5) circle (0.2);
\fill (1.5,3.5) circle (0.2);
\fill (2.5,4.5) circle (0.2);
\fill (2.5,5.5) circle (0.2);
\end{scope}

\begin{scope}[xshift=17.5cm]
\draw (0,0) grid[step=1] (3,6);
\draw (0,0) -- (3,3);
\draw (0,3) -- (3,6);
\fill (0.5,0.5) circle (0.2);
\fill (0.5,1.5) circle (0.2);
\fill (1.5,2.5) circle (0.2);
\fill (2.5,3.5) circle (0.2);
\fill (1.5,4.5) circle (0.2);
\fill (2.5,5.5) circle (0.2);

\draw [line width=0.25mm] (1.5,4.5) -- (2.5,3.5);
\end{scope}

\begin{scope}[xshift=14cm]
\draw (0,0) grid[step=1] (3,6);
\draw (0,0) -- (3,3);
\draw (0,3) -- (3,6);
\fill (0.5,0.5) circle (0.2);
\fill (1.5,1.5) circle (0.2);
\fill (0.5,2.5) circle (0.2);
\fill (1.5,3.5) circle (0.2);
\fill (2.5,4.5) circle (0.2);
\fill (2.5,5.5) circle (0.2);

\draw [line width=0.25mm] (0.5,2.5) -- (1.5,1.5);
\end{scope}

\begin{scope}[xshift=7cm]
\draw (0,0) grid[step=1] (3,6);
\draw (0,0) -- (3,3);
\draw (0,3) -- (3,6);
\fill (0.5,0.5) circle (0.2);
\fill (0.5,1.5) circle (0.2);
\fill (2.5,2.5) circle (0.2);
\fill (1.5,3.5) circle (0.2);
\fill (1.5,4.5) circle (0.2);
\fill (2.5,5.5) circle (0.2);

\draw [line width=0.25mm] (1.5,3.5) -- (2.5,2.5);
\draw [line width=0.25mm] (1.5,4.5) -- (2.5,2.5);
\end{scope}

\begin{scope}[xshift=10.5cm]

\draw [color=red] (-0.25,-0.25) rectangle (3.25,6.25);

\draw (0,0) grid[step=1] (3,6);
\draw (0,0) -- (3,3);
\draw (0,3) -- (3,6);
\fill (0.5,0.5) circle (0.2);
\fill (1.5,1.5) circle (0.2);
\fill (0.5,2.5) circle (0.2);
\fill (2.5,3.5) circle (0.2);
\fill (1.5,4.5) circle (0.2);
\fill (2.5,5.5) circle (0.2);

\draw [line width=0.25mm] (0.5,2.5) -- (1.5,1.5);
\draw [line width=0.25mm] (1.5,4.5) -- (2.5,3.5);
\end{scope}

\begin{scope}[xshift=3.5cm]
\draw (0,0) grid[step=1] (3,6);
\draw (0,0) -- (3,3);
\draw (0,3) -- (3,6);
\fill (0.5,0.5) circle (0.2);
\fill (1.5,1.5) circle (0.2);
\fill (1.5,2.5) circle (0.2);
\fill (0.5,3.5) circle (0.2);
\fill (2.5,4.5) circle (0.2);
\fill (2.5,5.5) circle (0.2);

\draw [line width=0.25mm] (0.5,3.5) -- (1.5,1.5);
\draw [line width=0.25mm] (0.5,3.5) -- (1.5,2.5);
\end{scope}

\begin{scope}[xshift=0cm]

\draw [color=red] (-0.25,-0.25) rectangle (3.25,6.25);

\draw (0,0) grid[step=1] (3,6);
\draw (0,0) -- (3,3);
\draw (0,3) -- (3,6);
\fill (0.5,0.5) circle (0.2);
\fill (1.5,1.5) circle (0.2);
\fill (2.5,2.5) circle (0.2);
\fill (0.5,3.5) circle (0.2);
\fill (1.5,4.5) circle (0.2);
\fill (2.5,5.5) circle (0.2);

\draw [line width=0.25mm] (0.5,3.5) -- (1.5,1.5);
\draw [line width=0.25mm] (0.5,3.5) -- (2.5,2.5);
\draw [line width=0.25mm] (1.5,4.5) -- (2.5,2.5);
\end{scope}
\end{scope}
\end{tikzpicture}

\end{center}

\caption{The elements of $DC_1,DC_2,DC_3$ (the elements of $SDC_1,SDC_2,SDC_3$ are framed in red).}
\label{fig:DC123}

\end{figure}
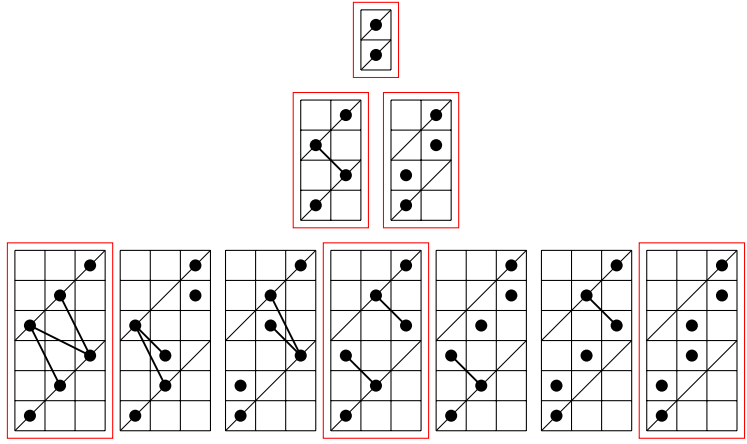
In \cite{F1} (see also \cite{F3,FFL}) it is proved that the Poincar\'e polynomial of the degenerate flag variety $\mathcal{F}_N^a$ has the following combinatorial interpretation for all $N \geq~1$~:
\begin{equation*}
P_{\mathcal{F}_N^a}(q) = \sum_{D \in DC_N} q^{\text{inv}(D)}.
\end{equation*}
For example, we can compute the Poincar\'e polynomials of $(\mathcal{F}_N^a)_{N\in[3]}$ from Figure~\ref{fig:DC123}~:
\begin{align*}
P_{\mathcal{F}_1^a}(q) &= 1,\\
P_{\mathcal{F}_2^a}(q) & = q+1,\\
P_{\mathcal{F}_3^a}(q) & = q^3+3q^2+2q+1.
\end{align*}

\subsection{Symmetric Dellac configurations}
Note that, in general, the set $DC_N$ is stable by the rotation $r_{\pi}$ of angle $\pi$ with respect to the center $(N/2,N)$ of Dellac configurations. We then define \textit{symmetric Dellac configurations} $D \in SDC_N$ as the Dellac configurations $D \in DC_N$ such that $r_{\pi}(D) = D$. In Figure~\ref{fig:DC123}, the symmetric Dellac configurations are framed in red.

Even symmetric Dellac configurations $D \in SDC_{2n}$ have been introduced in \cite{FF} under the name \textit{symplectic Dellac configurations}, to parametrize the torus fixed points of symplectic degenerate flag varieties $Sp\mathcal{F}_{2n}^a$. In \cite{BF}, we extend naturally the definition of $Sp\mathcal{F}_N^a$ for all $N \geq~1$, and we prove that the Poincar\'e polynomials of these varieties have the following combinatorial interpretation~:
\begin{equation*}
P_{Sp\mathcal{F}_{N}^a}(q) = \sum_{D \in SDC_{N}} q^{\widetilde{\text{inv}}(D)}
\end{equation*}
where $\widetilde{\text{inv}}(D)$ is the number of inversions of $D$, modulo the rotation $r_{\pi}$ (\textit{i.e.}, if two inversions of $D$ are symmetric with respect to $r_{\pi}$, that is, they are of the kind $(p_1,p_2)$ and $(r_{\pi}(p_2),r_{\pi}(p_1))$, then they represent one and only one element). For example, below are the Poincar\'e polynomials of $(Sp\mathcal{F}_N^a)_{N \in [4]}$ (the cases $N \in [3]$ are given by Figure~\ref{fig:DC123})~:
\begin{align*}
P_{Sp\mathcal{F}_1^a}(q) &= 1,\\
P_{Sp\mathcal{F}_2^a}(q) & = q+1,\\
P_{Sp\mathcal{F}_3^a}(q) & = q^2+q+1,\\
P_{Sp\mathcal{F}^a_4}(q) &= q^4 + 3q^3 + 3q^2 + 2q + 1.
\end{align*}
In \cite{BF}, we also define natural varieties $(SO\mathcal{F}_N^a)_{N\geq1}$, named \textit{orthogonal degenerate flag varieties}, whose Poincar\'e polynomials have the following combinatorial interpretation~:
\begin{equation*}
P_{SO\mathcal{F}_{N}^a}(q) = \sum_{D \in SDC_{N}} q^{\overline{\text{inv}}(D)}
\end{equation*}
where $\overline{\text{inv}}(D)$ equals $\widetilde{\text{inv}}(D)$ minus the number of inversions of $D$ of the kind $(p,r_{\pi}(p))$. For example, the Poincar\'e polynomials of $(SO\mathcal{F}_N^a)_{N\in[4]}$ (the cases $N\in[3]$ are given by Figure~\ref{fig:DC123}) are~:
\begin{align*}
P_{SO\mathcal{F}_1^a}(q) &= 1,\\
P_{SO\mathcal{F}_2^a}(q) & = 2,\\
P_{SO\mathcal{F}_3^a}(q) & = 2q+1,\\
P_{SO\mathcal{F}^a_4}(q) & = 4q^2 + 4q + 2.
\end{align*}

The primary goal of this paper is to determine the cardinalities of the sets $(SDC_N)_{N\geq~1}$. According to the parity of $N$, they appear in two adjoining sequences $(l_n)_{n \geq~0} = (1, 1, 3, 21, 267,\dots)$~\cite{11321267} and $(r_n)_{n \geq~0} = (1,2,10,98,1594,\dots)$~\cite{121098} defined in \cite{RZ} as specializations of polynomials interpolating median Euler numbers, of which we give a reminder hereafter.

\subsection{Extended median Euler numbers}
\label{sec:DnPnlnrn}

Consider the family of polynomials $(D_n(x))_{n \geq~0}$ defined by
\begin{align*}
D_0(x) &=1,\\
D_{n+1}(x) &= (x+1)(x+2)D_n(x+2) - x(x+1)D_n(x).
\end{align*}
They were introduced by Randrianarivony and Zeng~\cite{RZ} as interpolations of median Euler numbers $(L_n)_{n \geq~0} = (1, 1, 4, 46, 1024,\dots)$~\cite{114461024} and $(R_n)_{n \geq~0} = (1, 3, 24, 402,\dots)$~\cite{1324402}, through the equalities $L_n  = D_n(1/2)$ and $R_n  = D_n(-1/2)$.

It is easy to check that for all $n \geq~1$, the polynomial $D_n(x)$ is of the form $2^n(x+1)P_{n}(x+1)$ for some polynomial $P_{n}(x)$ with positive integer coefficients and degree $n-1$, whose induction formulas are
\begin{align*}
P_1(x) &=1,\\
P_{n+1}(x) &= \frac{(x+2)(x+1)}{2} P_n(x+2) - \frac{x(x-1)}{2} P_n(x).
\end{align*}
Several first values are
\begin{align*}
P_1(x) & = 1,\\
P_2(x) & = 2x+1,\\
P_3(x) & = 6x^2 + 10x + 5, \\
P_4(x) & = 24x^3 + 84x^2 + 110x + 49,\\
P_5(x) & = 120x^4 + 720x^3 + 1758x^2 + 1954x + 797.
\end{align*}
For all $n \geq~1$, let $c_{n,n-1} x^{n-1} + c_{n,n-2} x^{n-2} + \dots+ c_{n,1}x + c_{n,0}  = P_n(x)$. It is easy to check the following induction formulas : $c_{1,0} = 1$ and
\begin{align}
\label{eq:an0}
c_{n,0} &= \sum_{i = 0}^{n-2} 2^i c_{n-1,i},\\
\label{eq:ank}
c_{n,k} &= (k+1)c_{n-1,k-1} + \sum_{i = k}^{n-2} 2^{i-k} \left( \binom{i+1}{k} + 2\binom{i+1}{k-1} \right) c_{n-1,i},\\
\label{eq:anmoins1}
c_{n,n-1} &= n c_{n-1,n-2}
\end{align}
for all $n \geq~2$ and $k \in [n-2]$. In particular $c_{n,n-1} = n!$ for all $n \geq~1$.

The sequences of positive integers $(l_n)_{n \geq~0} = (1, 1, 3, 21, 267,\dots)$ and $(r_n)_{n \geq~0} = (1,2,10,98,1594\dots)$ are defined by $l_n = D_n(0)/2^n$ and $r_n = D_n(1)/2^n$ for all $n \geq~0$, which, in terms of $P_n$, is equivalent with $l_0 = r_0 = 1$ and
\begin{align*}
l_n &= P_n(1),\\
r_n &= 2P_n(2) = 2P_{n+1}(0)
\end{align*}
for all $n \geq~1$. From~\cite[Th\'eor\`eme 24]{RZ} and~\cite[Proposition 25]{RZ}, we deduce that
$$1+ \sum_{n \geq~1} xP_n(x)t^n = \dfrac{1}{1- \dfrac{xt}{1- \dfrac{(x+1)t}{1- \dfrac{2(x+2)t}{1- \dfrac{2(x+3)t}{1-\dfrac{3(x+4)t}{1-\dfrac{3(x+5)t}{\ddots}}}}}}}$$
and
\begin{equation}
\label{eq:PnintermsofSPn}
P_n(x) = \sum_{f \in SP_n} 2^{n-1-\max(f)-\text{fd}(f)} x^{\max(f)}
\end{equation}
for all $n \geq~1$, where :
\begin{itemize}
\item $SP_n$ is the set of surjective pistols \cite{Du,DR,DZ,ZZ} of size $n$ (surjective maps $f : [2n] \twoheadrightarrow \{2,4,6,\dots,2n\}$ such that 
$f(j) \geq~j$ for all $j$);
\item $\max(f)$ is the number of maximal points of $f$ (integers $j \in [2n-2]$ such that $f(j) = 2n$);
\item $\text{fd}(f)$ is the number of doubled fixed points of $f$ (integers $j \in [2n-2]$ such that there exists $j'< j$ with $f(j') = f(j) = j$).
\end{itemize}

For example $SP_2$ has three elements :
$$\begin{pmatrix}
1&2&3&4\\
2&2&4&4
\end{pmatrix},\begin{pmatrix}
1&2&3&4\\
4&2&4&4
\end{pmatrix}\text{ and }\begin{pmatrix}
1&2&3&4\\
2&4&4&4
\end{pmatrix},$$
whose numbers of maximal points are respectively 0,1 and 1, and whose number of doubled fixed points are respectively 1,0 and 0, hence $P_2(x) = 2^{1-0-1} x^0 + x^{1-1-0} x + x^{1-1-0} x = 2x+1$.

\subsection{Even symmetric Dellac configurations}
One of the goals of this paper is to give a new proof of the statement that the cardinality of $SDC_{2n}$ (for $n \geq~1)$ is the number $r_n$ from the sequence $(r_n)_{n \geq~0} = (1,2,10,98,1594,\dots)$ (another proof can be found in \cite{B1}). The first step towards this is to reduce $SDC_{2n}$ to smaller elements that we name even extended Dellac configurations, whose set is denoted by $\mathcal{T}_n^e$. The main result of this section is Theorem \ref{theo:PnintermsofTne}, which interprets $P_n(x)$ combinatorially in terms of $\mathcal{T}_n^e$, and which implies the equality $|SDC_{2n}| = 2P_n(2) = r_n$.

\subsubsection{Generation of $SDC_{2n}$}

An \textit{even extended Dellac configuration} $T \in \mathcal{T}_n^e$ is a tableau made of $n$ columns and $2n$ rows, which contains $2n$ points (one per row, two per column) above the line $y = x$ (in other words, this is the definition of $DC_n$, except that we remove the condition that the points must be under the line $y = n+x$). By considering the two points of the last column (from left to right) of a tableau $T \in \mathcal{T}_n^e$, it is easy to obtain the formula $|\mathcal{T}_n^e| = \frac{(n+1)n}{2} |\mathcal{T}_{n-1}^e| = (n+1)!n!/2^n$. For example, we represent in Figure~\ref{fig:T2e} the 3 elements of $T_2^e$. Note that the points located above the line $y = 2n-x$ (represented by a dashed line) are represented by stars ; in general, such points are named \textit{free points}, and the number of free points of $T \in \mathcal{T}_n^e$ is denoted by $\text{fr}(T)$. In Figure~\ref{fig:T2e}, the numbers of free points are 2, 1 and 2 from left to right.

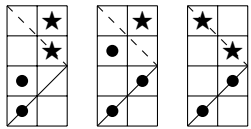
\begin{figure}[!htbp]

\begin{center}

\begin{tikzpicture}[scale=0.4]

\begin{scope}[shift={(0,0)}]

\draw (0,0) grid[step=1] (2,4);
\draw (0,0) -- (2,2);
\draw [dashed] (0,4) -- (2,2);
\fill (0.5,0.5) circle (0.2);
\fill (0.5,1.5) circle (0.2);
\draw (1.5,2.5) node[scale=0.8]{$\bigstar$};
\draw (1.5,3.5) node[scale=0.8]{$\bigstar$};

\end{scope}

\begin{scope}[shift={(3,0)}]

\draw (0,0) grid[step=1] (2,4);
\draw (0,0) -- (2,2);
\draw [dashed] (0,4) -- (2,2);
\fill (0.5,0.5) circle (0.2);
\fill (1.5,1.5) circle (0.2);
\fill (0.5,2.5) circle (0.2);
\draw (1.5,3.5) node[scale=0.8]{$\bigstar$};

\end{scope}

\begin{scope}[shift={(6,0)}]

\draw (0,0) grid[step=1] (2,4);
\draw (0,0) -- (2,2);
\draw [dashed] (0,4) -- (2,2);
\fill (0.5,0.5) circle (0.2);
\fill (1.5,1.5) circle (0.2);
\draw (1.5,2.5) node[scale=0.8]{$\bigstar$};
\draw (0.5,3.5) node[scale=0.8]{$\bigstar$};

\end{scope}

\end{tikzpicture}

\end{center}

\caption{The 3 elements of $\mathcal{T}_2^e$.}
\label{fig:T2e}

\end{figure}
There exists a one-to-one correspondence between $SDC_{2n}$ and the set of tableaux $T \in \mathcal{T}_n^e$ whose free points are labeled with the number 0 or 1 : for such a tableau $T$, the principle is :
\begin{itemize}
\item to draw an empty tableau $\widetilde{T}$ made of $2n$ columns and $4n$ rows;
\item for a box $(j:i)$ of $T$, if this box contains a point,
\begin{itemize}
\item if this point is not free, or if it is a free point whose label is $1$, then we draw a point in the box $(j:i)$ of $\widetilde{T}$;
\item otherwise this point is free and its label is $0$, and we draw a point in the box $(2n+1-j:i)$ of $\widetilde{T}$;
\end{itemize}
\item to apply the central reflexion with respect to the center $(n,2n)$ of $\widetilde{T}$ to the $2n$ points that have been drawn in $\widetilde{T}$, which indeed makes $\widetilde{T}$ an element of $SDC_{2n}$.
\end{itemize}
For example, we depict in Figure~\ref{fig:Ttilde} how a labeled tableau $T_0 \in \mathcal{T}_2^e$ is mapped to its corresponding element $\widetilde{T_0} \in SDC_4$.
\begin{figure}[!htbp]

\begin{center}

\begin{tikzpicture}[scale=0.4]

\begin{scope}[shift={(0,0)}]

\fill [color=red,opacity = 0.5] (0,3) rectangle (1,4);
\fill [color=blue,opacity = 0.5] (1,2) rectangle (2,3);

\draw (0,0) grid[step=1] (2,4);
\draw (0,0) -- (2,2);
\draw [dashed] (0,4) -- (2,2);
\fill (0.5,0.5) circle (0.2);
\fill (1.5,1.5) circle (0.2);
\draw (1.5,2.5) node[scale=1]{$0$};
\draw (0.5,3.5) node[scale=1]{$1$};

\end{scope}

\draw (3,2) node[scale=1]{$\longrightarrow$};

\begin{scope}[shift={(4,0)}]

\fill [color=red,opacity = 0.5] (0,3) rectangle (1,4);
\fill [color=blue,opacity = 0.5] (1,2) rectangle (2,3);

\fill [color=red,opacity = 0.2] (3,3) rectangle (4,4);
\fill [color=blue,opacity = 0.2] (2,2) rectangle (3,3);

\draw (0,0) grid[step=1] (2,4);
\draw (0,0) -- (4,4);
\draw (2,6) -- (4,8);
\draw [dashed] (0,4) -- (2,2);
\fill (0.5,0.5) circle (0.2);
\fill (1.5,1.5) circle (0.2);
\fill (0.5,3.5) circle (0.2);

\draw [black,opacity =1] (0,0) grid[step=1] (2,8);
\draw [black,opacity =1] (0,0) -- (2,2);
\draw [black,opacity =1] (0,4) -- (2,6);
\begin{scope}[shift={(4,8)},rotate=180]

\draw [black,opacity =1] (0,0) grid[step=1] (2,8);
\end{scope}

\fill [black,opacity = 1] (2.5,2.5) circle (0.2);

\end{scope}

\draw (9,4) node[scale=1]{$\longrightarrow$};

\begin{scope}[shift={(10,0)}]

\fill [color=red,opacity = 0.5] (0,3) rectangle (1,4);
\fill [color=blue,opacity = 0.5] (1,2) rectangle (2,3);

\fill [color=red,opacity = 0.2] (3,3) rectangle (4,4);
\fill [color=blue,opacity = 0.2] (2,2) rectangle (3,3);

\draw (0,0) grid[step=1] (2,4);
\draw (0,0) -- (2,2);
\draw [dashed] (0,4) -- (2,2);
\fill (0.5,0.5) circle (0.2);
\fill (1.5,1.5) circle (0.2);
\fill (1.5,5.5) circle (0.2);
\fill (0.5,3.5) circle (0.2);

\draw [black,opacity =1] (0,0) grid[step=1] (2,8);
\draw [black,opacity =1] (0,0) -- (2,2);
\draw [black,opacity =1] (0,4) -- (2,6);
\begin{scope}[shift={(4,8)},rotate=180]

\fill [color=red,opacity = 0.5] (0,3) rectangle (1,4);
\fill [color=blue,opacity = 0.5] (1,2) rectangle (2,3);

\fill [color=red,opacity = 0.2] (3,3) rectangle (4,4);
\fill [color=blue,opacity = 0.2] (2,2) rectangle (3,3);

\draw [black,opacity =1] (0,0) grid[step=1] (2,8);
\draw [black,opacity =1] (0,0) -- (2,2);
\draw [black,opacity =1] (0,4) -- (2,6);
\draw [black,opacity =1] [dashed] (0,4) -- (2,2);
\fill [black,opacity =1] (0.5,0.5) circle (0.2);
\fill [black,opacity =1] (1.5,1.5) circle (0.2);
\fill [black,opacity =1] (1.5,5.5) circle (0.2);
\fill [black,opacity =1] (0.5,3.5) circle (0.2);
\end{scope}

\fill [black,opacity = 1] (2.5,2.5) circle (0.2);

\end{scope}

\end{tikzpicture}

\end{center}

\caption{Construction of an even symmetric Dellac configuration.}
\label{fig:Ttilde}

\end{figure}
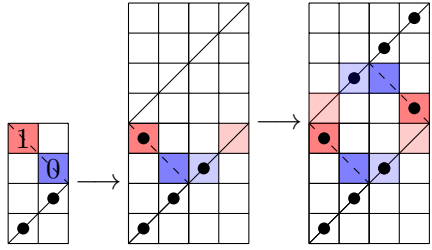

\begin{figure}[!htbp]
\begin{center}

\begin{tikzpicture}[scale=0.3]

\draw (0,0) grid[step=1] (2,4);
\draw (0,0) -- (2,2);
\draw [dashed] (0,4) -- (2,2);
\fill (0.5,0.5) circle (0.2);
\fill (0.5,1.5) circle (0.2);
\draw (1.5,2.5) node[scale=0.6]{$\bigstar$};
\draw (1.5,3.5) node[scale=0.6]{$\bigstar$};

\draw [gray,opacity = 0.7] (0,0) grid[step=1] (2,8);
\draw [gray,opacity = 0.7] (0,0) -- (2,2);
\draw [gray,opacity = 0.7] (0,4) -- (2,6);
\begin{scope}[shift={(4,8)},rotate=180]
\draw [gray,opacity = 0.7] (0,0) grid[step=1] (2,8);
\draw [gray,opacity = 0.7] (0,0) -- (2,2);
\draw [gray,opacity = 0.7] (0,4) -- (2,6);
\draw [gray,opacity = 0.7] [dashed] (0,4) -- (2,2);
\fill [gray,opacity = 0.7] (0.5,0.5) circle (0.2);
\fill [gray,opacity = 0.7] (0.5,1.5) circle (0.2);
\draw [gray,opacity = 0.7] (1.5,2.5) node[scale=0.6]{$\bigstar$};
\draw [gray,opacity = 0.7] (1.5,3.5) node[scale=0.6]{$\bigstar$};
\end{scope}

\begin{scope}[shift={(5,0)}]

\draw (0,0) grid[step=1] (2,4);
\draw (0,0) -- (2,2);
\draw [dashed] (0,4) -- (2,2);
\fill (0.5,0.5) circle (0.2);
\fill (0.5,1.5) circle (0.2);
\draw (1.5,2.5) node[scale=0.6]{$\bigstar$};
\draw (2.5,3.5) node[scale=0.6]{$\bigstar$};

\draw [gray,opacity = 0.7] (0,0) grid[step=1] (2,8);
\draw [gray,opacity = 0.7] (0,0) -- (2,2);
\draw [gray,opacity = 0.7] (0,4) -- (2,6);
\begin{scope}[shift={(4,8)},rotate=180]
\draw [gray,opacity = 0.7] (0,0) grid[step=1] (2,8);
\draw [gray,opacity = 0.7] (0,0) -- (2,2);
\draw [gray,opacity = 0.7] (0,4) -- (2,6);
\draw [gray,opacity = 0.7] [dashed] (0,4) -- (2,2);
\fill [gray,opacity = 0.7] (0.5,0.5) circle (0.2);
\fill [gray,opacity = 0.7] (0.5,1.5) circle (0.2);
\draw [gray,opacity = 0.7] (1.5,2.5) node[scale=0.6]{$\bigstar$};
\draw [gray,opacity = 0.7] (2.5,3.5) node[scale=0.6]{$\bigstar$};
\end{scope}

\end{scope}

\begin{scope}[shift={(0,-9)}]

\draw (0,0) grid[step=1] (2,4);
\draw (0,0) -- (2,2);
\draw [dashed] (0,4) -- (2,2);
\fill (0.5,0.5) circle (0.2);
\fill (0.5,1.5) circle (0.2);
\draw (2.5,2.5) node[scale=0.6]{$\bigstar$};
\draw (1.5,3.5) node[scale=0.6]{$\bigstar$};

\draw [gray,opacity = 0.7] (0,0) grid[step=1] (2,8);
\draw [gray,opacity = 0.7] (0,0) -- (2,2);
\draw [gray,opacity = 0.7] (0,4) -- (2,6);
\begin{scope}[shift={(4,8)},rotate=180]
\draw [gray,opacity = 0.7] (0,0) grid[step=1] (2,8);
\draw [gray,opacity = 0.7] (0,0) -- (2,2);
\draw [gray,opacity = 0.7] (0,4) -- (2,6);
\draw [gray,opacity = 0.7] [dashed] (0,4) -- (2,2);
\fill [gray,opacity = 0.7] (0.5,0.5) circle (0.2);
\fill [gray,opacity = 0.7] (0.5,1.5) circle (0.2);
\draw [gray,opacity = 0.7] (2.5,2.5) node[scale=0.6]{$\bigstar$};
\draw [gray,opacity = 0.7] (1.5,3.5) node[scale=0.6]{$\bigstar$};
\end{scope}

\end{scope}

\begin{scope}[shift={(5,-9)}]

\draw (0,0) grid[step=1] (2,4);
\draw (0,0) -- (2,2);
\draw [dashed] (0,4) -- (2,2);
\fill (0.5,0.5) circle (0.2);
\fill (0.5,1.5) circle (0.2);
\draw (2.5,2.5) node[scale=0.6]{$\bigstar$};
\draw (2.5,3.5) node[scale=0.6]{$\bigstar$};

\draw [gray,opacity = 0.7] (0,0) grid[step=1] (2,8);
\draw [gray,opacity = 0.7] (0,0) -- (2,2);
\draw [gray,opacity = 0.7] (0,4) -- (2,6);
\begin{scope}[shift={(4,8)},rotate=180]
\draw [gray,opacity = 0.7] (0,0) grid[step=1] (2,8);
\draw [gray,opacity = 0.7] (0,0) -- (2,2);
\draw [gray,opacity = 0.7] (0,4) -- (2,6);
\draw [gray,opacity = 0.7] [dashed] (0,4) -- (2,2);
\fill [gray,opacity = 0.7] (0.5,0.5) circle (0.2);
\fill [gray,opacity = 0.7] (0.5,1.5) circle (0.2);
\draw [gray,opacity = 0.7] (2.5,2.5) node[scale=0.6]{$\bigstar$};
\draw [gray,opacity = 0.7] (2.5,3.5) node[scale=0.6]{$\bigstar$};
\end{scope}

\end{scope}

\draw (-1,-10) rectangle (10,9);

\begin{scope}[shift={(3.5,10)}]

\draw (0,0) grid[step=1] (2,4);
\draw (0,0) -- (2,2);
\draw [dashed] (0,4) -- (2,2);
\fill (0.5,0.5) circle (0.2);
\fill (0.5,1.5) circle (0.2);
\draw (1.5,2.5) node[scale=0.6]{$\bigstar$};
\draw (1.5,3.5) node[scale=0.6]{$\bigstar$};

\end{scope}


\begin{scope}[shift={(11,0)}]

\draw (0,0) grid[step=1] (2,4);
\draw (0,0) -- (2,2);
\draw [dashed] (0,4) -- (2,2);
\fill (0.5,0.5) circle (0.2);
\fill (1.5,1.5) circle (0.2);
\fill (0.5,2.5) circle (0.2);
\draw (1.5,3.5) node[scale=0.6]{$\bigstar$};

\draw [gray,opacity = 0.7] (0,0) grid[step=1] (2,8);
\draw [gray,opacity = 0.7] (0,0) -- (2,2);
\draw [gray,opacity = 0.7] (0,4) -- (2,6);
\begin{scope}[shift={(4,8)},rotate=180]
\draw [gray,opacity = 0.7] (0,0) grid[step=1] (2,8);
\draw [gray,opacity = 0.7] (0,0) -- (2,2);
\draw [gray,opacity = 0.7] (0,4) -- (2,6);
\draw [gray,opacity = 0.7] [dashed] (0,4) -- (2,2);
\fill [gray,opacity = 0.7] (0.5,0.5) circle (0.2);
\fill [gray,opacity = 0.7] (1.5,1.5) circle (0.2);
\fill [gray,opacity = 0.7] (0.5,2.5) circle (0.2);
\draw [gray,opacity = 0.7] (1.5,3.5) node[scale=0.6]{$\bigstar$};
\end{scope}

\begin{scope}[shift={(0,-9)}]

\draw (0,0) grid[step=1] (2,4);
\draw (0,0) -- (2,2);
\draw [dashed] (0,4) -- (2,2);
\fill (0.5,0.5) circle (0.2);
\fill (1.5,1.5) circle (0.2);
\fill (0.5,2.5) circle (0.2);
\draw (2.5,3.5) node[scale=0.6]{$\bigstar$};

\draw [gray,opacity = 0.7] (0,0) grid[step=1] (2,8);
\draw [gray,opacity = 0.7] (0,0) -- (2,2);
\draw [gray,opacity = 0.7] (0,4) -- (2,6);
\begin{scope}[shift={(4,8)},rotate=180]
\draw [gray,opacity = 0.7] (0,0) grid[step=1] (2,8);
\draw [gray,opacity = 0.7] (0,0) -- (2,2);
\draw [gray,opacity = 0.7] (0,4) -- (2,6);
\draw [gray,opacity = 0.7] [dashed] (0,4) -- (2,2);
\fill [gray,opacity = 0.7] (0.5,0.5) circle (0.2);
\fill [gray,opacity = 0.7] (1.5,1.5) circle (0.2);
\fill [gray,opacity = 0.7] (0.5,2.5) circle (0.2);
\draw [gray,opacity = 0.7] (2.5,3.5) node[scale=0.6]{$\bigstar$};
\end{scope}

\end{scope}

\draw (-1,-10) rectangle (5,9);

\begin{scope}[shift={(1,10)}]

\draw (0,0) grid[step=1] (2,4);
\draw (0,0) -- (2,2);
\draw [dashed] (0,4) -- (2,2);
\fill (0.5,0.5) circle (0.2);
\fill (1.5,1.5) circle (0.2);
\fill (0.5,2.5) circle (0.2);
\draw (1.5,3.5) node[scale=0.6]{$\bigstar$};

\end{scope}

\end{scope}


\begin{scope}[shift={(17,0)}]

\draw (0,0) grid[step=1] (2,4);
\draw (0,0) -- (2,2);
\draw [dashed] (0,4) -- (2,2);
\fill (0.5,0.5) circle (0.2);
\fill (1.5,1.5) circle (0.2);
\draw (1.5,2.5) node[scale=0.6]{$\bigstar$};
\draw (0.5,3.5) node[scale=0.6]{$\bigstar$};

\draw [gray,opacity = 0.7] (0,0) grid[step=1] (2,8);
\draw [gray,opacity = 0.7] (0,0) -- (2,2);
\draw [gray,opacity = 0.7] (0,4) -- (2,6);
\begin{scope}[shift={(4,8)},rotate=180]
\draw [gray,opacity = 0.7] (0,0) grid[step=1] (2,8);
\draw [gray,opacity = 0.7] (0,0) -- (2,2);
\draw [gray,opacity = 0.7] (0,4) -- (2,6);
\draw [gray,opacity = 0.7] [dashed] (0,4) -- (2,2);
\fill [gray,opacity = 0.7] (0.5,0.5) circle (0.2);
\fill [gray,opacity = 0.7] (1.5,1.5) circle (0.2);
\draw [gray,opacity = 0.7] (1.5,2.5) node[scale=0.6]{$\bigstar$};
\draw [gray,opacity = 0.7] (0.5,3.5) node[scale=0.6]{$\bigstar$};
\end{scope}

\begin{scope}[shift={(0,-9)}]

\draw (0,0) grid[step=1] (2,4);
\draw (0,0) -- (2,2);
\draw [dashed] (0,4) -- (2,2);
\fill (0.5,0.5) circle (0.2);
\fill (1.5,1.5) circle (0.2);
\draw (2.5,2.5) node[scale=0.6]{$\bigstar$};
\draw (0.5,3.5) node[scale=0.6]{$\bigstar$};

\draw [gray,opacity = 0.7] (0,0) grid[step=1] (2,8);
\draw [gray,opacity = 0.7] (0,0) -- (2,2);
\draw [gray,opacity = 0.7] (0,4) -- (2,6);
\begin{scope}[shift={(4,8)},rotate=180]
\draw [gray,opacity = 0.7] (0,0) grid[step=1] (2,8);
\draw [gray,opacity = 0.7] (0,0) -- (2,2);
\draw [gray,opacity = 0.7] (0,4) -- (2,6);
\draw [gray,opacity = 0.7] [dashed] (0,4) -- (2,2);
\fill [gray,opacity = 0.7] (0.5,0.5) circle (0.2);
\fill [gray,opacity = 0.7] (1.5,1.5) circle (0.2);
\draw [gray,opacity = 0.7] (2.5,2.5) node[scale=0.6]{$\bigstar$};
\draw [gray,opacity = 0.7] (0.5,3.5) node[scale=0.6]{$\bigstar$};
\end{scope}

\end{scope}

\begin{scope}[shift={(5,0)}]

\draw (0,0) grid[step=1] (2,4);
\draw (0,0) -- (2,2);
\draw [dashed] (0,4) -- (2,2);
\fill (0.5,0.5) circle (0.2);
\fill (1.5,1.5) circle (0.2);
\draw (1.5,2.5) node[scale=0.6]{$\bigstar$};
\draw (3.5,3.5) node[scale=0.6]{$\bigstar$};

\draw [gray,opacity = 0.7] (0,0) grid[step=1] (2,8);
\draw [gray,opacity = 0.7] (0,0) -- (2,2);
\draw [gray,opacity = 0.7] (0,4) -- (2,6);
\begin{scope}[shift={(4,8)},rotate=180]
\draw [gray,opacity = 0.7] (0,0) grid[step=1] (2,8);
\draw [gray,opacity = 0.7] (0,0) -- (2,2);
\draw [gray,opacity = 0.7] (0,4) -- (2,6);
\draw [gray,opacity = 0.7] [dashed] (0,4) -- (2,2);
\fill [gray,opacity = 0.7] (0.5,0.5) circle (0.2);
\fill [gray,opacity = 0.7] (1.5,1.5) circle (0.2);
\draw [gray,opacity = 0.7] (1.5,2.5) node[scale=0.6]{$\bigstar$};
\draw [gray,opacity = 0.7] (3.5,3.5) node[scale=0.6]{$\bigstar$};
\end{scope}

\end{scope}

\begin{scope}[shift={(5,-9)}]

\draw (0,0) grid[step=1] (2,4);
\draw (0,0) -- (2,2);
\draw [dashed] (0,4) -- (2,2);
\fill (0.5,0.5) circle (0.2);
\fill (1.5,1.5) circle (0.2);
\draw (2.5,2.5) node[scale=0.6]{$\bigstar$};
\draw (3.5,3.5) node[scale=0.6]{$\bigstar$};

\draw [gray,opacity = 0.7] (0,0) grid[step=1] (2,8);
\draw [gray,opacity = 0.7] (0,0) -- (2,2);
\draw [gray,opacity = 0.7] (0,4) -- (2,6);
\begin{scope}[shift={(4,8)},rotate=180]
\draw [gray,opacity = 0.7] (0,0) grid[step=1] (2,8);
\draw [gray,opacity = 0.7] (0,0) -- (2,2);
\draw [gray,opacity = 0.7] (0,4) -- (2,6);
\draw [gray,opacity = 0.7] [dashed] (0,4) -- (2,2);
\fill [gray,opacity = 0.7] (0.5,0.5) circle (0.2);
\fill [gray,opacity = 0.7] (1.5,1.5) circle (0.2);
\draw [gray,opacity = 0.7] (2.5,2.5) node[scale=0.6]{$\bigstar$};
\draw [gray,opacity = 0.7] (3.5,3.5) node[scale=0.6]{$\bigstar$};
\end{scope}

\end{scope}

\draw (-1,-10) rectangle (10,9);

\begin{scope}[shift={(3.5,10)}]

\draw (0,0) grid[step=1] (2,4);
\draw (0,0) -- (2,2);
\draw [dashed] (0,4) -- (2,2);
\fill (0.5,0.5) circle (0.2);
\fill (1.5,1.5) circle (0.2);
\draw (1.5,2.5) node[scale=0.6]{$\bigstar$};
\draw (0.5,3.5) node[scale=0.6]{$\bigstar$};

\end{scope}

\end{scope}

\end{tikzpicture}

\end{center}
\caption{Construction of $SDC_4$ from $\mathcal{T}_2^e$.}
\label{fig:SDC4generatedbyT2e}

\end{figure}
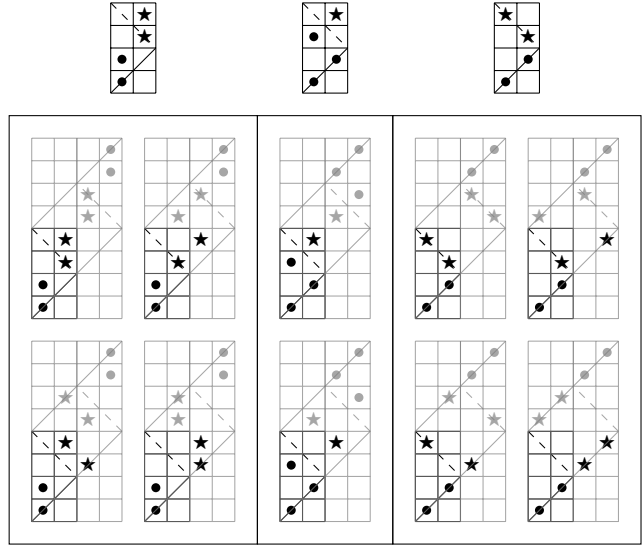

This bijection between the tableaux $T \in \mathcal{T}_n^e$ whose free points are labeled with 0 or 1 and $SDC_{2n}$ implies the following formula :
\begin{equation}
\label{eq:TnegeneratesSpDC2n}
|SDC_{2n}| = \sum_{T \in \mathcal{T}_n^e} 2^{\text{fr}(T)}.
\end{equation}
For the example $n = 2$, the $10 = 2^2+2^1+2^2$ elements of $SDC_{4}$ are generated by the 3 elements of $\mathcal{T}_2^e$ as depicted in Figure~\ref{fig:SDC4generatedbyT2e}, where the elements of $SDC_4$ in a same frame under every $T \in \mathcal{T}_2^e$ correspond with the $2^{\text{fr}(T)}$ different labellings of the free points of $T$.

\subsubsection{A combinatorial interpretation of $P_n(x)$ in terms of $\mathcal{T}_n^e$}
\label{sec:theorem1}
\begin{dfn}
\label{defi:evenpath}
Let $T \in \mathcal{T}_n^e$. For a given point $p_0 = (j:i)$ of $T$, we first define a path $(p_0,p_1,\dots)$ of points of $T$ as follows : for all $k \geq~0$, assume that the points $p_0,\dots,p_k$ are defined, and let $(j_k:i_k) = p_k$;
\begin{enumerate}[label=\alph*)]
\item if $j_k = n$, then $p_{k+1}$ is defined as $p_k$;
\item if $j_k < n$ and $p_k$ is the upper point of its column, then $p_{k+1}$ is defined as $p_{2n-j_k}^T$ (the point of the $(2n-j_k)$-th row $L_{2n-j_k}^T$ of $T$, see Notation);
\item if $j_k < n$ and $p_k$ is the lower point of its column, then $p_{k+1}$ is defined as $p_{j_k}^T$.
\end{enumerate}
We then define a finite subsequence $S_T(i) = (p_{k_0},\dots,p_{k_f})$ of $(p_0,p_1,\dots)$ by $k_0 = 0$ and, if we assume that $k_0,\dots,k_q$ are defined for some $q \geq~0$,
\begin{itemize}
\item if $j_{k_q} < \max \{j_k : k \geq~0\}$, then $k_{q+1} = \min \{k > k_q : j_k > j_{k_q}\}$;
\item otherwise $f = q$.
\end{itemize}
We finally define $B(T) = S_T(n)$ and $R(T) = S_T(2n)$. Let $\text{b}(T)+1$ and $\text{r}(T)+1$ be their respective number of elements.
\end{dfn}

For example, we represent in Figure~\ref{fig:examplepaths} a tableau $T_1 \in \mathcal{T}_7^e$ with $B(T_1) = ((2:7),(3:12),(7:11))$ and $R(T_1) = ((4:14),(7:10))$. In general, when depicting a tableau $T \in \mathcal{T}_n^e$, the $\text{b}(T)$ first elements of $B(T)$ and the $\text{r}(T)$ first elements of $R(T)$ are painted in blue and red respectively. The meaning of the green color in Figure~\ref{fig:examplepaths} will be explained afterwards (see Definition~\ref{defi:green}).

\begin{figure}[!htbp]

\begin{center}

\begin{tikzpicture}[scale=0.5]
\draw (-0.5,0.5) node[scale=1]{$1$};
\draw (-0.5,1.5) node[scale=1]{$2$};
\draw (-0.5,2.5) node[scale=1]{$3$};
\draw (-0.5,3.5) node[scale=1]{$4$};
\draw (-0.5,4.5) node[scale=1]{$5$};
\draw (-0.5,5.5) node[scale=1]{$6$};
\draw (-0.5,6.5) node[scale=1]{$7$};
\draw (-0.5,7.5) node[scale=1]{$8$};
\draw (-0.5,8.5) node[scale=1]{$9$};
\draw (-0.5,9.5) node[scale=1]{$10$};
\draw (-0.5,10.5) node[scale=1]{$11$};
\draw (-0.5,11.5) node[scale=1]{$12$};
\draw (-0.5,12.5) node[scale=1]{$13$};
\draw (-0.5,13.5) node[scale=1]{$14$};

\draw (0.5,-0.5) node[scale=1]{$1$};
\draw (1.5,-0.5) node[scale=1]{$2$};
\draw (2.5,-0.5) node[scale=1]{$3$};
\draw (3.5,-0.5) node[scale=1]{$4$};
\draw (4.5,-0.5) node[scale=1]{$5$};
\draw (5.5,-0.5) node[scale=1]{$6$};
\draw (6.5,-0.5) node[scale=1]{$7$};

\draw (0,0) grid[step=1] (7,14);
\draw (0,0) -- (7,7);
\draw [dashed] (0,14) -- (7,7);
\fill (0.5,0.5) circle (0.2);
\fill (0.5,1.5) circle (0.2);
\fill (2.5,2.5) circle (0.2);
\fill (3.5,3.5) circle (0.2);
\fill (4.5,4.5) circle (0.2);
\fill (4.5,5.5) circle (0.2);
\fill (1.5,6.5) circle (0.2);
\fill (5.5,7.5) circle (0.2);
\draw (5.5,8.5) node[scale=0.8]{$\bigstar$};
\draw (6.5,9.5) node[scale=0.8]{$\bigstar$};
\draw (6.5,10.5) node[scale=0.8]{$\bigstar$};
\draw (2.5,11.5) node[scale=0.8]{$\bigstar$};
\draw (1.5,12.5) node[scale=0.8]{$\bigstar$};
\draw (3.5,13.5) node[scale=0.8]{$\bigstar$};

\draw[color=red] (3.5,13.5) node[scale=0.8]{$\bigstar$};
\draw[color=red] [very thick] (3.5,13.5) [densely dashed] -- (3.5,9.5);
\draw[color=red] [very thick] (3.5,9.5) [densely dashed] -- (6.5,9.5);

\fill[color=blue] (1.5,6.5) circle (0.2);
\draw[color=blue] [very thick] (1.5,6.5) [densely dashed] -- (1.5,1.5);
\draw[color=blue] [very thick] (1.5,1.5) [densely dashed] -- (0.5,1.5);
\draw[color=blue] [very thick] (0.5,1.5) [densely dashed] -- (0.5,12.5);
\draw[color=blue] [very thick] (0.5,12.5) [densely dashed] -- (1.5,12.5);
\draw[color=blue] [very thick] (1.5,12.5) [densely dashed] -- (1.5,11.5);
\draw[color=blue] [very thick] (1.5,11.5) [densely dashed] -- (2.5,11.5);
\draw[color=blue] (2.5,11.5) node[scale=0.8]{$\bigstar$};
\draw[color=blue] [very thick] (2.5,11.5) [densely dashed] -- (2.5,10.5);
\draw[color=blue] [very thick] (2.5,10.5) [densely dashed] -- (6.5,10.5);

\draw[color=ForestGreen] [very thick] (1.3,1.3) [densely dashed] -- (0.3,1.3);
\draw[color=ForestGreen] [very thick] (0.3,1.3) [densely dashed] -- (0.3,12.7);
\draw[color=ForestGreen] [very thick] (0.3,12.7) [densely dashed] -- (1.3,12.7);
\draw[color=ForestGreen] (1.5,12.5) node[scale=0.8]{$\bigstar$};

\end{tikzpicture}

\end{center}

\caption{Extended Dellac configuration $T_1 \in \mathcal{T}_7^e$ such that $(\text{b}(T_1),\text{r}(T_1),\text{g}(T_1)) = (\textcolor{blue}{2},\textcolor{red}{1},\textcolor{ForestGreen}{1})$.}
\label{fig:examplepaths}

\end{figure}
\begin{dfn}
\label{defi:TPath}
Let $j \in [n]$ and consider a tableau $T$ made of $n$ columns (each of which contains at most two points) and $2n$ rows (each of which contains at most one point), such that each of the $j-1$ first rows (from bottom to top) of $T$ contains exactly one point above the line $y = x$, and each of the $j-1$ first columns (from left to right) contains exactly two points (for example, any tableau $T \in \mathcal{T}_n^e$ satisfies this condition). Let $i \in [j,2n]$ be such that the $j-1$ first boxes of the $L_i^T$ are empty (for example, any $i \in [j,2n]$ such that the box $(j:i)$ of $T$ contains a point). We define a sequence $\mathcal{I}_T(j:i) = (i_0,i_1,\dots)$ of elements of $[2n]$ as follows~: $i_0 = i$, afterwards if $i_0,\dots,i_k$ are defined for some $k \geq~0$,
\begin{enumerate}
\item if $i_k \in [j,2n-j] \sqcup \{2n\}$, then $i_{k+1}$ is defined as $i_k$;
\item if $i_k \in [2n-j+1,2n-1]$, let $j_k = 2n - i_k \in [j-1]$, then $i_{k+1}$ is defined by $(j_k:i_{k+1})$ being the upper point of $C_{j_k}^T$;
\item otherwise $i_k \in [j-1]$, and $i_{k+1}$ is defined by $(i_k:i_{k+1})$ being the lower point of the $C_{i_k}^T$.
\end{enumerate}
\end{dfn}

For example, in Figure~\ref{fig:examplepaths}, one can read the sequences corresponding to the free points $p = (j:i)$ that appear in the red, blue or green path, by following the path in the reversed direction $(\dots,p_{k},p_{k-1},\dots)$. We have $\mathcal{I}_{T_1}(7:10) = (10,14,14,\dots),\mathcal{I}_{T_1}(7:11)=(11,12,13,2,7,7,\dots)$, etc. The green path, as a subpath of the blue path, also gives $\mathcal{I}_{T_1}(2:13) = (13,2,2,\dots)$.

\begin{prop}
\label{prop:TPathstationnaire}
In the context of Definition~\ref{defi:TPath}, the sequence $\mathcal{I}_T(j:i) = (i_0,i_1,\dots)$ becomes stationary, \textit{i.e.}, there exists an integer $\text{root}_T(j:i) \in [j,2n-j] \sqcup \{2n\}$ (named the root of the box $(j:i)$ in $T$) and $k_0 \geq~0$ such that $i_k = \text{root}_T(j:i)$ for all $k \geq~k_0$. Moreover, the map $i \mapsto \text{root}_T(j:i)$ is a bijection from the set of integers $i \in [j,2n]$ such that the first $j-1$ boxes of $L_i^T$ are empty, to $[j,2n-j] \sqcup \{2n\}$.
\end{prop}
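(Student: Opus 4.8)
The plan is to regard the sequence $\mathcal{I}_T(j:i)=(i_0,i_1,\dots)$ as the forward orbit of a single ``successor map'' $s$ on $[2n]$, where $s$ is the identity on $A:=[j,2n-j]\sqcup\{2n\}$ (rule (1) of Definition~\ref{defi:TPath}) and is given by rules (2),(3) elsewhere, and to control this map through its \emph{backward} determinism rather than any forward monotonicity. Call a row index $v\in[2n]$ \emph{marked} if one of the boxes $(1:v),\dots,(j-1:v)$ contains a point. The hypotheses on $T$ say that $C_1^T,\dots,C_{j-1}^T$ carry exactly $2(j-1)$ points and that rows $1,\dots,j-1$ each carry exactly one point, necessarily in a column $\le j-1$; hence there are exactly $2(j-1)$ marked rows, of which $j-1$ lie in $[1,j-1]$ (namely all of them) and $j-1$ lie in $[j,2n]$. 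Consequently the domain of the root map, which is exactly the set of \emph{unmarked} rows in $[j,2n]$, has $(2n-j+1)-(j-1)=2n-2j+2$ elements, matching the target cardinality $|A|=(2n-2j+1)+1$.

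Two observations drive everything. First, $s$ sends every value outside $A$ to a marked row: rule (3) sends $v\in[1,j-1]$ to the row of a point of $C_v^T$, and rule (2) sends $v\in[2n-j+1,2n-1]$ to the row of a point of $C_{2n-v}^T$, and in both cases the relevant column index lies in $[j-1]$. Second, and this is the key lemma, every marked row $w$ has a \emph{unique} preimage under $s$ among values outside $A$, while every unmarked row has none. Indeed $L_w^T$ carries a single point; if $w$ is marked this point lies in a unique column $c\le j-1$ and is either the lower or the upper point of $C_c^T$. In the first case the only solution of $s(v)=w$ with $v\notin A$ is $v=c$ (rule (3)); in the second it is $v=2n-c$ (rule (2)); since a single row cannot be simultaneously a lower point of one column and an upper point of another, no value from the competing rule also maps to $w$. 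If $w$ is unmarked, any preimage would force a point of one of $C_1^T,\dots,C_{j-1}^T$ into $L_w^T$, contradicting unmarkedness.

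\emph{Termination.} The orbit $(i_k)$ lives in the finite set $[2n]$, so some value repeats; let $b$ be minimal with $i_b\in\{i_0,\dots,i_{b-1}\}$ and pick $a<b$ with $i_a=i_b$. Suppose $i_b\notin A$; then $i_{b-1}\notin A$ (otherwise $i_b=i_{b-1}\in A$), so $i_b$ is marked by the first observation. If $a=0$ then $i_0=i_b$ is marked, contradicting that $i_0$ is unmarked; hence $a\ge1$. Then $i_{a-1},i_{b-1}\notin A$ (as $A$ is $s$-fixed and $i_a,i_b\notin A$), and the marked value $i_a=i_b$ has a unique non-$A$ preimage by the key lemma, forcing $i_{a-1}=i_{b-1}$ --- an earlier repetition, contradicting minimality of $b$. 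Therefore $i_b\in A$, so the orbit reaches $A$ and is constant thereafter, proving that the sequence becomes stationary with $\text{root}_T(j:i)\in A$.

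\emph{Bijectivity.} Since domain and target have equal cardinality it suffices to prove injectivity. Fix $w\in A$ with $\text{root}_T(j:i)=w$ and set $T_0=\min\{k:i_k\in A\}$, so $i_{T_0}=w$ and $i_0,\dots,i_{T_0-1}\notin A$. If $w$ is unmarked, the key lemma forbids reaching it from any non-$A$ value, forcing $T_0=0$ and $i_0=w$. If $w$ is marked, then $T_0\ge1$ (as $i_0$ is unmarked), and each $i_k$ with $1\le k\le T_0$ is marked (an image of the non-$A$ value $i_{k-1}$); reading the key lemma backwards reconstructs $i_{T_0-1},i_{T_0-2},\dots$ as the successive unique non-$A$ preimages, a forced chain that halts exactly when it first meets an unmarked value, which can only be $i_0$. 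In either case $i$ is uniquely determined by $w$, so the root map is injective, hence bijective; in fact this backward chain furnishes an explicit inverse $A\to\{\text{unmarked rows}\ \ge j\}$. I expect the main obstacle to be precisely this reliance on backward determinism: the forward map is not monotone and may even fix values outside $A$ (a lower point on the diagonal $y=x$ in column $c$ gives $s(c)=c$), so neither termination nor injectivity can be read off the forward dynamics, and the uniqueness-of-preimage lemma together with the unmarkedness of the starting row must do all the work.
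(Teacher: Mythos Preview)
Your proof is correct and follows essentially the same strategy as the paper's: both arguments rest on the backward determinism of the successor map $s$ (what the paper calls ``reversibility'' and what you package as the key lemma on unique non-$A$ preimages of marked rows), and both deduce termination by showing that a cycle avoiding $A$ would force the unmarked starting row $i_0$ to be marked. Your presentation is more explicit---naming marked rows, isolating the preimage lemma, and using a minimal-repetition index rather than the paper's period shift $i_{k_2-k_1}=i_0$---but the mathematical content is the same, as is the cardinality count for surjectivity.
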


\begin{proof}
Since $[2n]$ is a finite set, there exist $0 \leq k_1 < k_2$ such that $i_{k_1} = i_{k_2}$. Suppose that $i_k \not\in [j,2n-j] \sqcup \{2n\}$ for all $k \geq~0$, \textit{i.e.}, the rule (i) of Definition~\ref{defi:TPath} is never applied. Then, the sequence $(i_k)_{k \geq~0}$ is reversible : for all $k > 0$, let $j_{k-1} \in [n]$ such that $p_{i_k}^T = (j_{k-1}:i_k)$ ; if $(j_{k-1}:i_k)$ is the upper point of its column, then $i_{k-1} = 2n-j_{k-1}$, otherwise $i_{k-1} = j_{k-1}$. This reversibility and the equality $i_{k_2} = i_{k_1}$ imply $i_{k_2-k_1} = i_{k_1-k_1} = i_0 = i$. Since $k_2 - k_1 > 0$ and, for all $k > 0$, the point $p_{i_k}^T$ is of the kind $(j_{k-1}:i_k)$ for some $j_{k-1} \in [j-1]$, then the point $p = (j:i)$ equals $(j_{k_2-k_1-1}:i_{k_2-k_1})$ for some $j_{k_2-k_1-1} < j$, which is absurd, so $i_k$ belongs to $[j,2n-j] \sqcup \{2n\}$ for $k$ big enough.

Let $k_{\min}$ be the smallest integer $k \geq~0$ such that $i_k = \text{root}_T(j:i)$. As stated in the previous paragraph, the sequence $(i=i_0,i_1,\hdots,i_{k_{\min}})$ is reversible because it never involves the rule (i) of Definition~\ref{defi:TPath}, so the map $i \mapsto \text{root}_T(j:i)$ is injective. Finally, the number of integers $i \in [j,2n]$ such that the first $j-1$ boxes of $L_i^T$ are empty is exactly $2(n-j+1) = |[j,2n-j] \sqcup \{2n\}|$ (hence $i \mapsto \text{root}_T(j:i)$ is bijective)~: since the $j-1$ first rows of $T$ contain exactly $j-1$ points, and the $j-1$ first columns of $T$ contain exactly $2(j-1)$ points, then, among the $2n-j+1$ upper rows of $T$ (the rows $C_j^T,C_{j+1}^T,\dots,C_{2n}^T$), exactly $j-1$ of them contain a point in one of their $j-1$ first boxes, so $2n-j+1 - (j-1) = 2(n-j+1)$ of them have their $j-1$ first boxes empty.
\end{proof}

\begin{dfn}
\label{defi:green}
We define $\mathcal{G}(T)$ as the set of the free points $p = (j:i)$ of $T$ such that :
\begin{itemize}
\item the point distinct from $p$ in $C_j^T$ is an element of $B(T)$;
\item the integer $\text{root}_T(j:i)$ equals $j$.
\end{itemize}
The cardinality of $\mathcal{G}(T)$ is denoted by $\text{g}(T)$ and its elements are painted in green in the graphical representation of $T$.
\end{dfn}
For example, in Figure~\ref{fig:examplepaths}, we have $\mathcal{G}(T_1) = \{(2:13)\}$.

\begin{dfn}
Let $n \geq~1$ and $T \in \mathcal{T}_n^e$. We denote by $\text{Omax}(T)$ (whose cardinality is $\text{omax}(T)$) the set of the elements of $B(T),R(T)$ and $\mathcal{G}(T)$. Note that $\text{omax}(T) \geq~2$. The set $\text{Max}(T)$ (whose cardinality is $\max(T)$) is defined as $\text{Omax}(T)$ from which we removed the last element of $B(T)$ and the last element of $R(T)$, hence $$\max(T) = b(T)+r(T)+g(T) = \omax(T)-2.$$
\end{dfn}

\begin{rem}
\label{rem:evenmaximalfreepoints}
For all $T \in \mathcal{T}_n^e$, the $\text{b}(T)$ last points of $B(T)$, the $\text{r}(T)+1$ points of $R(T)$, and the $\text{g}(T)$ points of $\mathcal{G}(T)$, are pairwise distinct free points of $T$, so
$$\text{fr}(T) \geq~\max(T)+1.$$
\end{rem}

The first main result of this paper is the following.

\begin{thm}
\label{theo:PnintermsofTne}
For all $n \geq~1$,
\begin{equation*}
\label{eq:PnintermsofTne}
P_n(x) = \sum_{T \in \mathcal{T}_n^e} 2^{\text{fr}(T)-1-\max(T)} x^{\max(T)}.
\end{equation*}
\end{thm}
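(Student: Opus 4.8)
The plan is to argue by induction on $n$, showing that the right-hand side
$$Q_n(x) := \sum_{T \in \mathcal{T}_n^e} 2^{\text{fr}(T)-1-\max(T)}\, x^{\max(T)}$$
satisfies the same initial data and recursion as $P_n(x)$. The base case $n=1$ is an explicit check: $\mathcal{T}_1^e$ has a single element $T$ (both points of the unique column in rows $1$ and $2$), for which $\text{fr}(T)=1$, $B(T)=((1:1))$, $R(T)=((1:2))$, and $\mathcal{G}(T)=\emptyset$ (since $\text{root}_T(1:2)=2\neq 1$), so $b(T)=r(T)=g(T)=0$, $\max(T)=0$, and $Q_1(x)=2^{0}x^{0}=1=P_1(x)$. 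It then remains to prove
$$Q_{n+1}(x) = \frac{(x+2)(x+1)}{2}\,Q_n(x+2) - \frac{x(x-1)}{2}\,Q_n(x).$$

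For the inductive step I would use the last-column decomposition underlying the identity $|\mathcal{T}_{n+1}^e| = \frac{(n+2)(n+1)}{2}|\mathcal{T}_n^e|$: every $T' \in \mathcal{T}_{n+1}^e$ arises from a unique $T \in \mathcal{T}_n^e$ by inserting a new rightmost column (column $n+1$) carrying two points in rows $a<b$ chosen from $[n+1,2n+2]$, with the old rows reindexed order-preservingly into the remaining $2n$ rows (one checks this preserves the above-diagonal condition). The heart of the matter is to track the transformation of the statistics. The shift $x \mapsto x+2$ in $Q_n$ I would read off the identity $\max(T)=\omax(T)-2$: the two points discarded in passing from $\text{Omax}(T)$ to $\text{Max}(T)$ are the terminal points of $B(T)$ and $R(T)$, which cease to be terminal once the extra column $n+1$ is present and thus re-enter $\text{Max}(T')$, each either contributing to the exponent of $x$ or being demoted to an ordinary free point (a factor $2$ absorbed into $2^{\text{fr}-1-\max}$); via $(x+2)^{\max(T)}=\sum_{S\subseteq \text{Max}(T)}2^{\max(T)-|S|}x^{|S|}$ this produces exactly the substitution $x\mapsto x+2$. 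The prefactor $\frac{(x+2)(x+1)}{2}$ should then encode the two new points of the added column as potential new terminal points of $B(T'),R(T')$ or new green points, the unordered choice $a<b$ accounting for the $\tfrac12$. Here Proposition~\ref{prop:TPathstationnaire} and the root function are the tools that decide precisely when the new points extend the blue/red paths or create an element of $\mathcal{G}(T')$, hence exactly how $b,r,g$ and $\text{fr}$ change.

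The main obstacle is the negative term $-\frac{x(x-1)}{2}Q_n(x)$, which precludes a sign-free bijective proof and forces an inclusion–exclusion. Concretely, I expect that among the $\binom{n+2}{2}$ placements of the new column a distinguished sub-family yields configurations that are over-counted by the positive term $\frac{(x+2)(x+1)}{2}Q_n(x+2)$ — typically placements that would register a ``new maximal point'' which is in fact forced, or that fail to land on a valid root in the sense of Proposition~\ref{prop:TPathstationnaire} — and the claim I would have to verify is that this offending family, carrying its induced weights, is enumerated exactly by $\frac{x(x-1)}{2}Q_n(x)$. The delicate point is to match this correspondence weight-by-weight, in particular to confirm that the absence of any shift ($x\mapsto x$, no reactivation and no new contribution) in the subtracted term matches precisely those configurations in which the new column creates nothing and no existing maximal point is demoted.

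As an alternative route — and a useful sanity check — one may bypass the recursion by constructing a bijection $\Phi : \mathcal{T}_n^e \to SP_n$ with $\max(\Phi(T)) = \max(T)$ and $\text{fd}(\Phi(T)) = n - \text{fr}(T)$, whence the theorem follows directly from the Randrianarivony--Zeng formula~\eqref{eq:PnintermsofSPn}; the data $(B(T),R(T),\mathcal{G}(T),\text{root}_T)$ is exactly what is needed to read off the values of the pistol $f$, with the maximal points of $f$ matching $\text{Max}(T)$. Finally, specializing $x=2$ collapses the asserted identity to the already-established $|SDC_{2n}| = \sum_{T} 2^{\text{fr}(T)}$ of~\eqref{eq:TnegeneratesSpDC2n}, since there $2^{\text{fr}(T)-1-\max(T)}x^{\max(T)} = 2^{\text{fr}(T)-1}$; this is a reassuring consistency test that any correct proof must pass.
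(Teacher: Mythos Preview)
Your alternative route is not available: there is no bijection $\Phi:\mathcal{T}_n^e\to SP_n$, because the two sets do not even have the same cardinality. Indeed $|\mathcal{T}_n^e|=(n+1)!\,n!/2^n$ gives $1,3,18,180,\dots$, whereas $|SP_n|$ is the median Genocchi sequence $1,3,17,155,\dots$; already at $n=3$ the counts differ. What the paper records (in the Remark following the theorem) is a \emph{surjection} $\varphi:\mathcal{T}_n^e\twoheadrightarrow SP_n$ satisfying the fiberwise identity~\eqref{eq:orbitofapistol}, together with $\max(f)=\max(T)$ for $T\in\varphi^{-1}(f)$; this does yield the theorem, but it is a different (and substantially harder) statement than the bijection you propose.

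Your main inductive approach also has a genuine gap. The last-column decomposition you describe --- delete column $n{+}1$ and the two rows carrying its points --- is a valid bijection realising $|\mathcal{T}_{n+1}^e|=\binom{n+2}{2}|\mathcal{T}_n^e|$, but it does \emph{not} interact well with the statistics $\max$ and $\mathrm{fr}$. The paths $B(T')$ and $R(T')$ are anchored at the specific rows $n{+}1$ and $2n{+}2$, and under your row deletion those rows need not map to rows $n$ and $2n$ of the quotient; consequently $B$, $R$, $\mathcal{G}$ of the image bear no simple relation to those of $T'$, and your heuristic that the two terminal points of $B,R$ are ``reactivated'' (giving the shift $x\mapsto x+2$) cannot be made precise in this setup. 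The paper's proof uses a different and more delicate reduction $\Pi:\mathcal{T}_n^e\twoheadrightarrow\mathcal{T}_{n-1}^e$ that deletes the last column together with the \emph{fixed} rows $n$ and $n{+}1$, precisely so that the anchors of $B$ and $R$ are preserved; it then introduces auxiliary paths $B'(T),R'(T),\mathcal{G}'(T)$ (anchored at rows $n{-}1$ and $n{+}1$) whose elements merge with those of $B,R,\mathcal{G}$ in the image. With this map in hand the paper verifies the \emph{coefficient} recursions~\eqref{eq:an0}--\eqref{eq:anmoins1} directly, via a three-case analysis of the fibres $\Pi^{-1}(T_0)$; the signed polynomial recursion is never confronted as such, so no inclusion--exclusion of the type you sketch is needed.
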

For example, the three elements of $\mathcal{T}_2^e$ below
\begin{center}

\begin{tikzpicture}[scale=0.4]

\draw (0,0) grid[step=1] (2,4);
\draw (0,0) -- (2,2);
\draw [dashed] (0,4) -- (2,2);
\fill (0.5,0.5) circle (0.2);
\fill [color=blue] (0.5,1.5) circle (0.2);
\draw (1.5,2.5) node[scale=0.8]{$\bigstar$};
\draw (1.5,3.5) node[scale=0.8]{$\bigstar$};

\draw[color=black] (1.5,3.5) node[scale=0.8]{$\bigstar$};
\draw[color=blue] [very thick] (0.5,1.5) [densely dashed] -- (0.5,2.5);
\draw[color=blue] [very thick] (0.5,2.5) [densely dashed] -- (1.5,2.5);
\draw[color=black] (1.5,2.5) node[scale=0.8]{$\bigstar$};

\begin{scope}[shift={(3,0)}]

\draw (0,0) grid[step=1] (2,4);
\draw (0,0) -- (2,2);
\draw [dashed] (0,4) -- (2,2);
\fill (0.5,0.5) circle (0.2);
\fill (0.5,2.5) circle (0.2);
\fill (1.5,1.5) circle (0.2);
\draw (1.5,3.5) node[scale=0.8]{$\bigstar$};

\draw[color=black] (1.5,3.5) node[scale=0.8]{$\bigstar$};

\end{scope}

\begin{scope}[shift={(6,0)}]

\draw (0,0) grid[step=1] (2,4);
\draw (0,0) -- (2,2);
\draw [dashed] (0,4) -- (2,2);
\fill (0.5,0.5) circle (0.2);
\fill (1.5,1.5) circle (0.2);
\draw (0.5,3.5) node[scale=0.8]{$\bigstar$};
\draw (1.5,2.5) node[scale=0.8]{$\bigstar$};

\draw[color=red] (0.5,3.5) node[scale=0.8]{$\bigstar$};
\draw[color=red] [very thick] (0.5,3.5) [densely dashed] -- (0.5,2.5);
\draw[color=red] [very thick] (0.5,2.5) [densely dashed] -- (1.5,2.5);
\draw[color=black] (1.5,2.5) node[scale=0.8]{$\bigstar$};

\end{scope}
\end{tikzpicture}

\end{center}
give $$P_2(x) = 2^{2-1-\textcolor{blue}{1}-\textcolor{red}{0}-\textcolor{ForestGreen}{0}} x^{\textcolor{blue}{1}+\textcolor{red}{0}+\textcolor{ForestGreen}{0}} + 2^{1-1-\textcolor{blue}{0}-\textcolor{red}{0}-\textcolor{ForestGreen}{0}} x^{\textcolor{blue}{0}+\textcolor{red}{0}+\textcolor{ForestGreen}{0}}+2^{2-1-\textcolor{blue}{0}-\textcolor{red}{1}-\textcolor{ForestGreen}{0}} x^{\textcolor{blue}{0}+\textcolor{red}{1}+\textcolor{ForestGreen}{0}}
 = 2x+1.$$

The cardinality of $SDC_{2n}$ being $r_n = 2P_n(2)$ is then obtained by setting $x = 2$ in Theorem \ref{theo:PnintermsofTne} in view of formula~\eqref{eq:TnegeneratesSpDC2n}. Section \ref{sec:PnintermsofTne} is dedicated to the proof of Theorem \ref{theo:PnintermsofTne} (by induction, using the inductions formulas \eqref{eq:an0},\eqref{eq:ank} and \eqref{eq:anmoins1}).

\begin{rem}
An independent proof of $|SDC_{2n}| = r_n$ is given in \cite{B1}. It consists in defining a surjection $\varphi : \mathcal{T}_n^e \twoheadrightarrow SP_n$ such that
\begin{equation}
\label{eq:orbitofapistol}
2^{n-\text{fd}(f)} = \sum_{T \in \varphi^{-1}(f)} 2^{\text{fr}(T)}
\end{equation}
for all $f \in SP_n$, which proves indeed $|SDC_{2n}| = 2P_n(2) = r_n$ in view of formula~\eqref{eq:PnintermsofSPn} and formula~\eqref{eq:TnegeneratesSpDC2n}. Afterwards, one can check that for all $f \in SP_n$ and $T \in \varphi^{-1}(f)$,
\begin{enumerate}
\item $\text{g}(T)$ is the number of integers $j \in \{2,4,\dots,2n-2\}$ such that \linebreak$f(j-1) = f(j) = 2n$ and $\min\{j'<j-1 : f(j') = j\}$ is even;
\item $\text{r}(T)$ is the number of all other integers $j \in \{2,4,\dots,2n-2\}$ such that $f(j) = 2n$;
\item $\text{b}(T)$ is the number of integers $j \in \{1,3,\dots,2n-3\}$ such that $f(j) = 2n$.
\end{enumerate}
It follows that $\max(f) = \max(T)$, which proves Theorem \ref{theo:PnintermsofTne} by formula~\eqref{eq:PnintermsofSPn} and formula~\eqref{eq:orbitofapistol}
\end{rem}

\subsection{Odd symmetric Dellac configurations}
A second goal is to prove that the cardinality of $SDC_{2n-1} (n \geq~1)$ is the number $l_n$ from the sequence $(l_n)_{n \geq~0} = (1,1,3,21,267,\dots)$ introduced in Section \ref{sec:DnPnlnrn}. It is obviously true for $n = 1$ (see Figure~\ref{fig:DC123}). Afterwards, as we did for the even case, we reduce $SDC_{2n-1} (n \geq~2)$ to smaller elements called odd extended Dellac configurations, whose set is denoted by $\mathcal{T}_{n-1}^o$. In Theorem~\ref{theo:PnintermsofTno}, we give a combinatorial interpretation of $P_n(x)$ in terms of $\mathcal{T}_{n-1}^o$, which implies the equality $|SDC_{2n-1}| = P_n(1) = l_n$.

\subsubsection{Generation of $SDC_{2n+1} (n \geq~1)$}
We define \textit{odd extended Dellac configurations} $T \in \mathcal{T}_n^o$ ($n \geq~1$) as the rectangular tableaux $T$ of size $n \times (2n+1)$ with the following conditions~:
\begin{itemize}
\item every column of $T$ contains exactly two points;
\item every row of $T$ contains at most one point;
\item each of the first $n$ rows of $T$ contains exactly one point.
\end{itemize}
In other words, a tableau $T \in \mathcal{T}_n^o$ is like an element of $\mathcal{T}_n^e$, except that it has $2n+1$ rows instead of $2n$, and exactly one of its $n+1$ upper rows is empty (\textit{i.e.}, has no points). There is a natural surjection $\phi : \mathcal{T}_n^o \twoheadrightarrow \mathcal{T}_n^e$ which consists in deleting the empty row of the elements of $\mathcal{T}_n^o$, and it is clear that $|\phi^{-1}(T)| = n+1$ for all $T \in \mathcal{T}_n^e$, hence $|\mathcal{T}_n^o| = (n+1) |\mathcal{T}_n^e| = ((n+1)!)^2/2^n$. For example, the $9$ elements of $\mathcal{T}_2^o$ are surjected into the $3$ elements of $\mathcal{T}_2^e$ as follows~:
\begin{center}

\begin{tikzpicture}[scale=0.4]

\draw (0,0) grid[step=1] (2,5);
\draw (0,0) -- (2,2);
\draw [dashed] (0,5) -- (2,3);
\fill (0.5,0.5) circle (0.2);
\fill (0.5,1.5) circle (0.2);
\fill (1.5,2.5) circle (0.2);
\draw (1.5,3.5) node[scale=0.8]{$\bigstar$};

\begin{scope}[shift={(3,0)}]

\draw (0,0) grid[step=1] (2,5);
\draw (0,0) -- (2,2);
\draw [dashed] (0,5) -- (2,3);
\fill (0.5,0.5) circle (0.2);
\fill (0.5,1.5) circle (0.2);
\fill (1.5,2.5) circle (0.2);
\draw (1.5,4.5) node[scale=0.8]{$\bigstar$};

\end{scope}

\begin{scope}[shift={(6,0)}]

\draw (0,0) grid[step=1] (2,5);
\draw (0,0) -- (2,2);
\draw [dashed] (0,5) -- (2,3);
\fill (0.5,0.5) circle (0.2);
\fill (0.5,1.5) circle (0.2);
\draw (1.5,3.5) node[scale=0.8]{$\bigstar$};
\draw (1.5,4.5) node[scale=0.8]{$\bigstar$};

\end{scope}

\draw (9,2.5) node[scale=0.8]{$\rightarrow$};

\begin{scope}[shift={(10,0.5)}]

\draw (0,0) grid[step=1] (2,4);
\draw (0,0) -- (2,2);
\draw [dashed] (0,4) -- (2,2);
\fill (0.5,0.5) circle (0.2);
\fill (0.5,1.5) circle (0.2);
\draw (1.5,2.5) node[scale=0.8]{$\bigstar$};
\draw (1.5,3.5) node[scale=0.8]{$\bigstar$};

\draw (2.3,2) node[scale=1]{$,$};

\end{scope}

\begin{scope}[shift={(0,-6)}]

\draw (0,0) grid[step=1] (2,5);
\draw (0,0) -- (2,2);
\draw [dashed] (0,5) -- (2,3);
\fill (0.5,0.5) circle (0.2);
\fill (0.5,2.5) circle (0.2);
\fill (1.5,1.5) circle (0.2);
\draw (1.5,3.5) node[scale=0.8]{$\bigstar$};

\begin{scope}[shift={(3,0)}]

\draw (0,0) grid[step=1] (2,5);
\draw (0,0) -- (2,2);
\draw [dashed] (0,5) -- (2,3);
\fill (0.5,0.5) circle (0.2);
\fill (0.5,2.5) circle (0.2);
\fill (1.5,1.5) circle (0.2);
\draw (1.5,4.5) node[scale=0.8]{$\bigstar$};

\end{scope}

\begin{scope}[shift={(6,0)}]

\draw (0,0) grid[step=1] (2,5);
\draw (0,0) -- (2,2);
\draw [dashed] (0,5) -- (2,3);
\fill (0.5,0.5) circle (0.2);
\fill (1.5,1.5) circle (0.2);
\fill (0.5,3.5) circle (0.2);
\draw (1.5,4.5) node[scale=0.8]{$\bigstar$};

\end{scope}

\draw (9,2.5) node[scale=0.8]{$\rightarrow$};

\begin{scope}[shift={(10,0.5)}]

\draw (0,0) grid[step=1] (2,4);
\draw (0,0) -- (2,2);
\draw [dashed] (0,4) -- (2,2);
\fill (0.5,0.5) circle (0.2);
\fill (0.5,2.5) circle (0.2);
\fill (1.5,1.5) circle (0.2);
\draw (1.5,3.5) node[scale=0.8]{$\bigstar$};

\draw (2.3,2) node[scale=1]{$,$};

\end{scope}

\end{scope}

\begin{scope}[shift={(10,0.5)}]

\draw (0,0) grid[step=1] (2,4);
\draw (0,0) -- (2,2);
\draw [dashed] (0,4) -- (2,2);
\fill (0.5,0.5) circle (0.2);
\fill (0.5,1.5) circle (0.2);
\draw (1.5,2.5) node[scale=0.8]{$\bigstar$};
\draw (1.5,3.5) node[scale=0.8]{$\bigstar$};

\end{scope}

\begin{scope}[shift={(0,-12)}]

\draw (0,0) grid[step=1] (2,5);
\draw (0,0) -- (2,2);
\draw [dashed] (0,5) -- (2,3);
\fill (0.5,0.5) circle (0.2);
\fill (1.5,1.5) circle (0.2);
\fill (0.5,3.5) circle (0.2);
\fill (1.5,2.5) circle (0.2);

\begin{scope}[shift={(3,0)}]

\draw (0,0) grid[step=1] (2,5);
\draw (0,0) -- (2,2);
\draw [dashed] (0,5) -- (2,3);
\fill (0.5,0.5) circle (0.2);
\fill (1.5,1.5) circle (0.2);
\draw (0.5,4.5) node[scale=0.8]{$\bigstar$};
\fill (1.5,2.5) circle (0.2);

\end{scope}

\begin{scope}[shift={(6,0)}]

\draw (0,0) grid[step=1] (2,5);
\draw (0,0) -- (2,2);
\draw [dashed] (0,5) -- (2,3);
\fill (0.5,0.5) circle (0.2);
\fill (1.5,1.5) circle (0.2);
\draw (0.5,4.5) node[scale=0.8]{$\bigstar$};
\draw (1.5,3.5) node[scale=0.8]{$\bigstar$};

\end{scope}

\draw (9,2.5) node[scale=0.8]{$\rightarrow$};

\begin{scope}[shift={(10,0.5)}]

\draw (0,0) grid[step=1] (2,4);
\draw (0,0) -- (2,2);
\draw [dashed] (0,4) -- (2,2);
\fill (0.5,0.5) circle (0.2);
\fill (1.5,1.5) circle (0.2);
\draw (0.5,3.5) node[scale=0.8]{$\bigstar$};
\draw (1.5,2.5) node[scale=0.8]{$\bigstar$};

\draw (2.3,2) node[scale=1]{$.$};

\end{scope}

\end{scope}

\end{tikzpicture}

\end{center}
Note that when depicting the tableaux $T \in \mathcal{T}_n^o$, we represent with stars the points of $T$ located on or above the line $y = 2n+1-x$ (represented by a dashed line), which we also name \textit{free points} of $T$ as for the elements of $\mathcal{T}_n^e$, and whose number is also denoted by $\text{fr}(T)$.

Similarly as for $SDC_{2n}$, the odd symmetric Dellac configurations $D \in SDC_{2n+1}$ are in one-to-one correspondence with the elements of $\mathcal{T}_n^o$ whose free points are labeled with the integers 0 or 1 : for such a tableau $T$, we construct the corresponding element of $SDC_{2n+1}$ as follows~:
\begin{itemize}
\item we draw an empty tableau $\overline{T}$ made of $2n+1$ columns and $4n+2$ rows;
\item for a box $(j:i)$ of $T$, if this box contains a point,
\begin{itemize}
\item if this point is not free, or if it is a free point whose label is $1$, then we draw a point in the box $(j:i)$ of $\overline{T}$;
\item otherwise this point is free and its label is $0$, and we draw a point in the box $(2n+2
-j:i)$ of $\overline{T}$;
\end{itemize}
\item we draw a point in the box $(n+1:i_0)$ where $L_{i_0}^T$ is its unique empty row;
\item we apply the central reflexion with respect to the center \linebreak$(n+1/2,2n+1)$ of $\overline{T}$ to the $2n+1$ points that have been drawn in $\overline{T}$, which indeed makes $\overline{T}$ an element of $SDC_{2n+1}$.
\end{itemize}
For example, we depict in Figure~\ref{fig:Ttilde2} how a labeled tableau $T_2 \in \mathcal{T}_2^o$ is mapped to its corresponding element $\overline{T_2} \in SDC_5$.
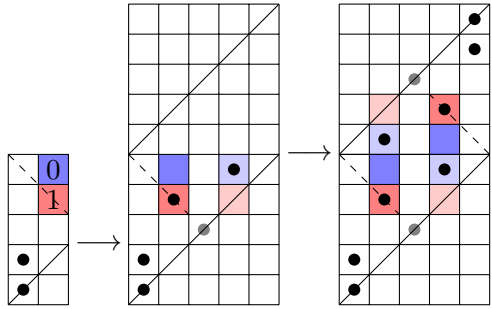
\begin{figure}[!htbp]

\begin{center}

\begin{tikzpicture}[scale=0.4]

\begin{scope}[shift={(0,0)}]

\fill [color=red,opacity = 0.5] (1,3) rectangle (2,4);
\fill [color=blue,opacity = 0.5] (1,4) rectangle (2,5);

\draw (0,0) grid[step=1] (2,5);
\draw (0,0) -- (2,2);
\draw [dashed] (0,5) -- (2,3);
\fill (0.5,0.5) circle (0.2);
\fill (0.5,1.5) circle (0.2);
\draw (1.5,3.5) node[scale=1]{$1$};
\draw (1.5,4.5) node[scale=1]{$0$};

\end{scope}

\draw (3,2) node[scale=1]{$\longrightarrow$};

\begin{scope}[shift={(4,0)}]

\fill [color=red,opacity = 0.5] (1,3) rectangle (2,4);
\fill [color=blue,opacity = 0.5] (1,4) rectangle (2,5);

\fill [color=red,opacity = 0.2] (3,3) rectangle (4,4);
\fill [color=blue,opacity = 0.2] (3,4) rectangle (4,5);

\draw (0,0) grid[step=1] (5,10);
\draw (0,0) -- (5,5);
\draw (0,5) -- (5,10);
\draw [dashed] (0,5) -- (2,3);
\fill (0.5,0.5) circle (0.2);
\fill (0.5,1.5) circle (0.2);
\fill (1.5,3.5) circle (0.2);
\fill (3.5,4.5) circle (0.2);
\fill [opacity = 0.5] (2.5,2.5) circle (0.2);

\end{scope}

\draw (10,5) node[scale=1]{$\longrightarrow$};

\begin{scope}[shift={(11,0)}]

\fill [color=red,opacity = 0.5] (1,3) rectangle (2,4);
\fill [color=blue,opacity = 0.5] (1,4) rectangle (2,5);

\fill [color=red,opacity = 0.2] (3,3) rectangle (4,4);
\fill [color=blue,opacity = 0.2] (3,4) rectangle (4,5);

\draw (0,0) grid[step=1] (5,5);
\draw (0,0) -- (5,5);
\draw [dashed] (0,5) -- (2,3);
\fill (0.5,0.5) circle (0.2);
\fill (0.5,1.5) circle (0.2);
\fill (1.5,3.5) circle (0.2);
\fill (3.5,4.5) circle (0.2);
\fill [opacity = 0.5] (2.5,2.5) circle (0.2);

\begin{scope}[shift={(5,10)}]
\begin{scope}[rotate=180]

\fill [color=red,opacity = 0.5] (1,3) rectangle (2,4);
\fill [color=blue,opacity = 0.5] (1,4) rectangle (2,5);

\fill [color=red,opacity = 0.2] (3,3) rectangle (4,4);
\fill [color=blue,opacity = 0.2] (3,4) rectangle (4,5);

\draw (0,0) grid[step=1] (5,5);
\draw (0,0) -- (5,5);
\draw [dashed] (0,5) -- (2,3);
\fill (0.5,0.5) circle (0.2);
\fill (0.5,1.5) circle (0.2);
\fill (1.5,3.5) circle (0.2);
\fill (3.5,4.5) circle (0.2);
\fill [opacity = 0.5] (2.5,2.5) circle (0.2);

\end{scope}
\end{scope}

\end{scope}

\end{tikzpicture}

\end{center}

\caption{Construction of an odd symmetric Dellac configuration.}
\label{fig:Ttilde2}

\end{figure}
This bijection between the tableaux $T \in \mathcal{T}_n^o$ whose free points are labeled with 0 or 1 and $SDC_{2n+1}$ implies the following formula for all $n \geq~1$ :
\begin{equation}
\label{eq:TnogeneratesSpDC2nmoins1}
|SDC_{2n+1}| = \sum_{T \in \mathcal{T}_n^o} 2^{\text{fr}(T)}.
\end{equation}
For the example $n = 1$, the $3 = 2^0+2^1$ elements of $SDC_{3}$ are generated by the 2 elements of $\mathcal{T}_2^o$ as depicted in Figure~\ref{fig:SDC3generatedbyT2o}, where the elements of $SDC_3$ in a same frame under every $T \in \mathcal{T}_1^o$ correspond with the $2^{\text{fr}(T)}$ different labellings of the free points of $T$.

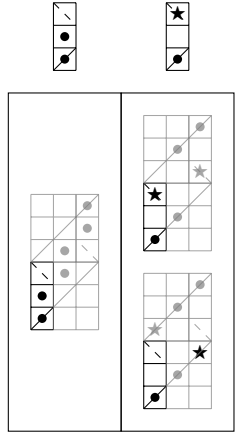
\begin{figure}[!htbp]

\begin{tikzpicture}[scale=0.3]

\begin{scope}[shift={(0,-1.5)}]

\draw (0,0) grid[step=1] (1,3);
\draw (0,0) -- (1,1);
\draw [dashed] (0,3) -- (1,2);
\fill (0.5,0.5) circle (0.2);
\fill (0.5,1.5) circle (0.2);
\fill [gray,opacity=0.7] (1.5,2.5) circle (0.2);

\draw [gray,opacity = 0.7] (1,1) -- (3,3);
\draw [gray,opacity = 0.7] (1,0) grid[step=1] (3,3);

\begin{scope}[shift={(3,6)},rotate=180]
\draw [gray,opacity=0.7] (0,0) grid[step=1] (1,3);
\draw [gray,opacity=0.7] (0,0) -- (1,1);
\draw [gray,opacity=0.7,dashed] (0,3) -- (1,2);
\fill [gray,opacity=0.7] (0.5,0.5) circle (0.2);
\fill [gray,opacity=0.7] (0.5,1.5) circle (0.2);
\fill [gray,opacity=0.7] (1.5,2.5) circle (0.2);

\draw [gray,opacity = 0.7] (1,1) -- (3,3);
\draw [gray,opacity = 0.7] (1,0) grid[step=1] (3,3);
\end{scope}

\end{scope}

\draw (-1,-6) rectangle (4,9);

\begin{scope}[shift={(1,10)}]

\draw (0,0) grid[step=1] (1,3);
\draw (0,0) -- (1,1);
\draw [dashed] (0,3) -- (1,2);
\fill (0.5,0.5) circle (0.2);
\fill (0.5,1.5) circle (0.2);

\end{scope}

\begin{scope}[shift={(5,0)}]

\begin{scope}[shift={(0,2)}]

\draw (0,0) grid[step=1] (1,3);
\draw (0,0) -- (1,1);
\draw [dashed] (0,3) -- (1,2);
\fill (0.5,0.5) circle (0.2);
\fill [gray,opacity=0.7] (1.5,1.5) circle (0.2);
\draw (0.5,2.5) node[scale=0.6]{$\bigstar$};

\draw [gray,opacity = 0.7] (1,1) -- (3,3);
\draw [gray,opacity = 0.7] (1,0) grid[step=1] (3,3);

\begin{scope}[shift={(3,6)},rotate=180]
\draw [gray,opacity=0.7] (0,0) grid[step=1] (1,3);
\draw [gray,opacity=0.7] (0,0) -- (1,1);
\draw [gray,opacity=0.7,dashed] (0,3) -- (1,2);
\fill [gray,opacity=0.7] (0.5,0.5) circle (0.2);
\fill [gray,opacity=0.7] (1.5,1.5) circle (0.2);
\draw [gray,opacity=0.7] (0.5,2.5) node[scale=0.6]{$\bigstar$};

\draw [gray,opacity = 0.7] (1,1) -- (3,3);
\draw [gray,opacity = 0.7] (1,0) grid[step=1] (3,3);
\end{scope}

\end{scope}

\begin{scope}[shift={(0,-5)}]

\draw (0,0) grid[step=1] (1,3);
\draw (0,0) -- (1,1);
\draw [dashed] (0,3) -- (1,2);
\fill (0.5,0.5) circle (0.2);
\fill [gray,opacity=0.7] (1.5,1.5) circle (0.2);
\draw (2.5,2.5) node[scale=0.6]{$\bigstar$};

\draw [gray,opacity = 0.7] (1,1) -- (3,3);
\draw [gray,opacity = 0.7] (1,0) grid[step=1] (3,3);

\begin{scope}[shift={(3,6)},rotate=180]
\draw [gray,opacity=0.7] (0,0) grid[step=1] (1,3);
\draw [gray,opacity=0.7] (0,0) -- (1,1);
\draw [gray,opacity=0.7,dashed] (0,3) -- (1,2);
\fill [gray,opacity=0.7] (0.5,0.5) circle (0.2);
\fill [gray,opacity=0.7] (1.5,1.5) circle (0.2);
\draw [gray,opacity=0.7] (2.5,2.5) node[scale=0.6]{$\bigstar$};

\draw [gray,opacity = 0.7] (1,1) -- (3,3);
\draw [gray,opacity = 0.7] (1,0) grid[step=1] (3,3);
\end{scope}

\end{scope}

\draw (-1,-6) rectangle (4,9);

\begin{scope}[shift={(1,10)}]

\draw (0,0) grid[step=1] (1,3);
\draw (0,0) -- (1,1);
\draw [dashed] (0,3) -- (1,2);
\fill (0.5,0.5) circle (0.2);
\draw (0.5,2.5) node[scale=0.6]{$\bigstar$};

\end{scope}

\end{scope}

\end{tikzpicture}

\caption{Construction of $SDC_3$ from $\mathcal{T}_1^o$.}
\label{fig:SDC3generatedbyT2o}

\end{figure}

\subsubsection{A combinatorial interpretation of $P_n(x)$ in terms of $\mathcal{T}_{n-1}^o$}
We adapt Definition \ref{defi:evenpath}, Definition \ref{defi:TPath}, Proposition \ref{prop:TPathstationnaire} and Definition \ref{defi:green} to the tableaux $T \in \mathcal{T}_n^o$ as follows~:
\begin{itemize}
\item first of all, we replace all the occurrences of $2n$ with $2n+1$, which doesn't change the proof of Proposition \ref{prop:TPathstationnaire};
\item regarding $S_T(i)$ from Definition \ref{defi:evenpath}, if $L_i^T$ is its unique empty row, then $S_T(i)$ is defined as the empty sequence ; otherwise, if the points $p_0,\dots,p_{k_0}$ are defined for some $k_0 \geq~0$, and if the rule~b) or the rule~c) involves the unique empty row of $T$, then we define $p_k$ as $p_{k_0}$ for all $k > k_0$;
\item in Definition \ref{defi:evenpath}, instead of $R(T)$ and $B(T)$, we define a unique finite sequence $V(T) = S_T(2n+1)$ (possibly empty), whose number of elements is denoted by $\text{v}(T)$;
\item the set $\mathcal{G}(T)$ is defined by replacing $B(T)$ with $V(T)$ in Definition~\ref{defi:green}.
\end{itemize}

For example, we represent in Figure~\ref{fig:examplepaths2} a tableau $T_3 \in \mathcal{T}_6^o$ with $V(T_3) = ((2:13),(3:11),(4:10))$ and $\mathcal{G}(T_3) = \{(2:12)\}$. In general, when depicting a tableau $T \in \mathcal{T}_n^o$, the $\text{v}(T)$ elements of $V(T)$ and the $\text{g}(T)$ elements of $\mathcal{G}(T)$ are painted in purple and green respectively.

\begin{figure}[!htbp]

\begin{center}

\begin{tikzpicture}[scale=0.5]
\draw (-0.5,0.5) node[scale=1]{$1$};
\draw (-0.5,1.5) node[scale=1]{$2$};
\draw (-0.5,2.5) node[scale=1]{$3$};
\draw (-0.5,3.5) node[scale=1]{$4$};
\draw (-0.5,4.5) node[scale=1]{$5$};
\draw (-0.5,5.5) node[scale=1]{$6$};
\draw (-0.5,6.5) node[scale=1]{$7$};
\draw (-0.5,7.5) node[scale=1]{$8$};
\draw (-0.5,8.5) node[scale=1]{$9$};
\draw (-0.5,9.5) node[scale=1]{$10$};
\draw (-0.5,10.5) node[scale=1]{$11$};
\draw (-0.5,11.5) node[scale=1]{$12$};
\draw (-0.5,12.5) node[scale=1]{$13$};

\draw (0.5,-0.5) node[scale=1]{$1$};
\draw (1.5,-0.5) node[scale=1]{$2$};
\draw (2.5,-0.5) node[scale=1]{$3$};
\draw (3.5,-0.5) node[scale=1]{$4$};
\draw (4.5,-0.5) node[scale=1]{$5$};
\draw (5.5,-0.5) node[scale=1]{$6$};

\draw (0,0) grid[step=1] (6,13);
\draw (0,0) -- (6,6);
\draw [dashed] (0,13) -- (6,7);
\fill (0.5,0.5) circle (0.2);
\fill (0.5,1.5) circle (0.2);
\fill (2.5,2.5) circle (0.2);
\fill (3.5,3.5) circle (0.2);
\fill (4.5,4.5) circle (0.2);
\fill (4.5,5.5) circle (0.2);
\fill (5.5,6.5) circle (0.2);
\draw (1.5,12.5) node[scale=0.8]{$\bigstar$};
\draw (1.5,11.5) node[scale=0.8]{$\bigstar$};
\draw (2.5,10.5) node[scale=0.8]{$\bigstar$};
\draw (3.5,9.5) node[scale=0.8]{$\bigstar$};
\draw (5.5,7.5) node[scale=0.8]{$\bigstar$};

\draw[color=Purple] (1.5,12.5) node[scale=0.8]{$\bigstar$};
\draw[color=Purple] [very thick] (1.5,12.5) [densely dashed] -- (1.5,10.5);
\draw[color=Purple] [very thick] (1.5,10.5) [densely dashed] -- (2.5,10.5);
\draw[color=Purple] (2.5,10.5) node[scale=0.8]{$\bigstar$};
\draw[color=Purple] [very thick] (2.5,10.5) [densely dashed] -- (2.5,9.5);
\draw[color=Purple] [very thick] (2.5,9.5) [densely dashed] -- (3.5,9.5);
\draw[color=Purple] (3.5,9.5) node[scale=0.8]{$\bigstar$};
\draw[color=Purple] [very thick] (3.5,9.5) [densely dashed] -- (3.5,8.5);

\draw[color=ForestGreen] [very thick] (1.5,1.5) [densely dashed] -- (0.5,1.5);
\draw[color=ForestGreen] [very thick] (0.5,1.5) [densely dashed] --
(0.5,11.5);
\draw[color=ForestGreen] [very thick] (0.5,11.5) [densely dashed] -- (1.5,11.5);
\draw[color=ForestGreen] (1.5,11.5) node[scale=0.8]{$\bigstar$};

\end{tikzpicture}

\end{center}

\caption{Odd extended Dellac configuration $T_3 \in \mathcal{T}_6^o$ such that $(\text{v}(T_3),\text{g}(T_3)) = (\textcolor{Purple}{3},\textcolor{ForestGreen}{1})$.}
\label{fig:examplepaths2}

\end{figure}

\begin{rem}
\label{rem:oddmaximalfreepoints}
For all $T \in \mathcal{T}_{n-1}^o$, the $\text{v}(T)$ elements of $V(T)$ and the $\text{g}(T)$ elements of $\mathcal{G}(T)$ are pairwise distinct free points, so $\text{fr}(T) \geq~\text{v}(T) + \text{g}(T)$.
\end{rem}

The second main result of this paper is the following.

\begin{thm}
\label{theo:PnintermsofTno}
For all $n \geq~2$,
\begin{equation*}
\label{eq:PnintermsofTno}
P_n(x) = \sum_{T \in \mathcal{T}_{n-1}^o} 2^{\text{fr}(T)-\text{g}(T)} x^{\text{v}(T)}(1+x)^{\text{g}(T)}.
\end{equation*}
\end{thm}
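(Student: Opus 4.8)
The plan is to prove Theorem~\ref{theo:PnintermsofTno} by induction on $n$, in exact parallel with the proof of Theorem~\ref{theo:PnintermsofTne}, using the same induction formulas \eqref{eq:an0}, \eqref{eq:ank}, \eqref{eq:anmoins1} for the coefficients $c_{n,k}$ of $P_n(x)$. First I would rewrite the claimed generating sum over $\mathcal{T}_{n-1}^o$ as a polynomial in $x$ by expanding $(1+x)^{\text{g}(T)}$ and collecting the coefficient of $x^k$; the statistic controlling the $x$-degree is $\text{v}(T)$ shifted by a chosen number $0 \le s \le \text{g}(T)$ of green points, so the coefficient of $x^k$ receives contributions from all $T$ with $\text{v}(T)+s=k$, weighted by $\binom{\text{g}(T)}{s} 2^{\text{fr}(T)-\text{g}(T)}$. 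The goal is then to match these collected coefficients with $c_{n,k}$.

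The main structural input is the surjection $\phi : \mathcal{T}_{n-1}^o \twoheadrightarrow \mathcal{T}_{n-1}^e$ that deletes the unique empty row, together with its fibers of size $n$. I would analyze how the statistics $(\text{v},\text{g},\text{fr})$ of $T \in \mathcal{T}_{n-1}^o$ relate to the statistics $(\text{b},\text{r},\text{g},\text{fr})$ of $\phi(T) \in \mathcal{T}_{n-1}^e$ as the position of the deleted empty row varies over its $n$ possible locations. The key observation I expect to exploit is that $V(T)$ in the odd setting is a single merged path, whereas in the even setting the analogous data splits into the two paths $B(T)$ and $R(T)$: depending on where the empty row is inserted, the odd path $V(T)$ should correspond either to a ``concatenation'' governed by the root structure of Proposition~\ref{prop:TPathstationnaire}, or to cases where the empty row interrupts one of the paths. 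I would set up a careful case analysis on the empty row's position relative to the roots $\text{root}_T(j:i)$, tracking precisely when inserting the empty row creates a new free point (raising $\text{fr}$), merges $B$ and $R$ into $V$, or converts an $R$-type contribution into a $\text{v}$-contribution versus a green contribution.

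Alternatively, and perhaps more cleanly, I would prove the theorem directly by induction using the recursion for $P_n$, mirroring Section~\ref{sec:PnintermsofTne}. The inductive step builds an element of $\mathcal{T}_{n-1}^o$ from one of $\mathcal{T}_{n-2}^o$ by inserting a new leftmost column (or equivalently analyzing the two points of the last column), and I would verify that the weight $2^{\text{fr}(T)-\text{g}(T)} x^{\text{v}(T)}(1+x)^{\text{g}(T)}$ transforms under this insertion according to the operator $\frac{(x+2)(x+1)}{2} P_n(x+2) - \frac{x(x-1)}{2} P_n(x)$ that defines $P_{n+1}$ from $P_n$. The substitutions $x \mapsto x+2$ and $x \mapsto x$ should correspond combinatorially to the ways the two new points interact with the existing paths: either extending $V(T)$ (incrementing $\text{v}$), creating new green points (incrementing $\text{g}$ and hence modifying the $(1+x)$ factor), or adding free points that are neither, which rescale the power of $2$.

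The hard part will be the bookkeeping of the green statistic under these operations, because the $(1+x)^{\text{g}(T)}$ factor makes the $x$-degree contribution of each green point ambiguous (it may or may not be ``counted'' toward $\text{v}$), and this must be reconciled precisely with the asymmetry between the coefficient recursions \eqref{eq:ank} and \eqref{eq:anmoins1}. In particular, showing that a green point contributes the factor $(1+x)$ rather than a pure power of $x$ or a pure power of $2$ requires identifying exactly which configurations allow the root of the corresponding free point to equal its column index $j$ (the defining condition in Definition~\ref{defi:green} as adapted), and proving that precisely these configurations survive under the column-insertion in two distinct ways corresponding to the binomial expansion. I expect this green-point accounting, rather than the overall inductive framework, to be the delicate step, and I would isolate it as a separate lemma tracking $\text{root}_T$ before and after insertion.
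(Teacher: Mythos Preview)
Your proposal differs substantially from the paper's proof, and your first route via $\phi$ has a genuine gap.

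The paper does \emph{not} redo any induction on $n$. Instead, it constructs a surjection $\mathcal{P}:\mathcal{T}_n^e\twoheadrightarrow\mathcal{T}_{n-1}^o$ (note the direction and the sizes: the source is the \emph{even} set at size $n$, the target is the \emph{odd} set at size $n-1$) and proves the fiber identity
\[
\sum_{T\in\mathcal{P}^{-1}(T_0)} 2^{\text{fr}(T)-1-\max(T)}x^{\max(T)} \;=\; 2^{\text{fr}(T_0)-\text{g}(T_0)}x^{\text{v}(T_0)}(1+x)^{\text{g}(T_0)}
\]
for every $T_0\in\mathcal{T}_{n-1}^o$. Summing over $T_0$ and invoking the already-established Theorem~\ref{theo:PnintermsofTne} gives Theorem~\ref{theo:PnintermsofTno} immediately. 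The fibers are parametrized by labelings $l:J(T_0)\to\{b,r,bg,br\}$ (three choices on columns carrying a green point, two on the others), and the $(1+x)$ factor emerges from a short binomial computation over the parameter $\delta_l$ counting green columns labeled $r$. No induction, no coefficient-matching against \eqref{eq:an0}--\eqref{eq:anmoins1}.

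Your $\phi$ approach has a size mismatch: $\phi$ maps $\mathcal{T}_{n-1}^o$ onto $\mathcal{T}_{n-1}^e$, but Theorem~\ref{theo:PnintermsofTne} identifies the weighted sum over $\mathcal{T}_{n-1}^e$ with $P_{n-1}(x)$, not $P_n(x)$. So even a perfect fiberwise analysis of $\phi$ would only relate your odd sum to $P_{n-1}$, and you would still need a separate recursion step to reach $P_n$. You never address this gap. Your alternative route, a direct induction on $\mathcal{T}_{n-1}^o$ mirroring Section~\ref{sec:PnintermsofTne}, is in principle workable, but would require rebuilding the entire $\Pi$-machinery (situations $\mathcal{S}_1,\mathcal{S}_2,\mathcal{S}_3$, the labeling systems, Lemmas~\ref{lem:S1}--\ref{lem:S3}) for the odd case with one path $V(T)$ instead of two paths $B(T),R(T)$; the paper's reduction to Theorem~\ref{theo:PnintermsofTne} avoids all of that.
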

For example, the two elements of $\mathcal{T}_1^o$
\begin{center}

\begin{tikzpicture}[scale=0.4]

\draw (0,0) grid[step=1] (1,3);
\draw (0,0) -- (1,1);
\draw [dashed] (0,3) -- (1,2);
\fill (0.5,0.5) circle (0.2);
\fill [color=black] (0.5,1.5) circle (0.2);

\begin{scope}[shift={(2,0)}]

\draw (0,0) grid[step=1] (1,3);
\draw (0,0) -- (1,1);
\draw [dashed] (0,3) -- (1,2);
\fill (0.5,0.5) circle (0.2);
\draw[color=Purple] (0.5,2.5) node[scale=0.8]{$\bigstar$};
\draw[color=Purple] [very thick] (0.5,2.5) [densely dashed] -- (0.5,1.5);

\end{scope}

\end{tikzpicture}

\end{center}
give $P_2(x) = 2^{0-\textcolor{ForestGreen}{0}} x^{\textcolor{Purple}{0}} (1+x)^{\textcolor{ForestGreen}{0}} + 2^{1-\textcolor{ForestGreen}{0}} x^{\textcolor{Purple}{1}} (1+x)^{\textcolor{ForestGreen}{0}} = 2x+1$, and the nine elements of $\mathcal{T}_2^o$

\begin{center}

\begin{tikzpicture}[scale=0.4]

\draw (0,0) grid[step=1] (2,5);
\draw (0,0) -- (2,2);
\draw [dashed] (0,5) -- (2,3);
\fill (0.5,0.5) circle (0.2);
\fill (0.5,1.5) circle (0.2);
\fill (1.5,2.5) circle (0.2);
\draw (1.5,3.5) node[scale=0.8]{$\bigstar$};

\begin{scope}[shift={(3,0)}]

\draw (0,0) grid[step=1] (2,5);
\draw (0,0) -- (2,2);
\draw [dashed] (0,5) -- (2,3);
\fill (0.5,0.5) circle (0.2);
\fill (0.5,1.5) circle (0.2);
\fill (1.5,2.5) circle (0.2);
\draw[color=Purple] (1.5,4.5) node[scale=0.8]{$\bigstar$};
\draw[color=Purple] [very thick] (1.65,4.5) [densely dashed] -- (1.65,2.5);
\draw[color=Purple] [very thick] (1.35,2.5) [densely dashed] -- (1.35,1.5);
\draw[color=Purple] [very thick] (1.35,1.5) [densely dashed] -- (0.5,1.5);
\draw[color=Purple] [very thick] (0.5,1.5) [densely dashed] -- (0.5,3.5);

\end{scope}

\begin{scope}[shift={(6,0)}]

\draw (0,0) grid[step=1] (2,5);
\draw (0,0) -- (2,2);
\draw [dashed] (0,5) -- (2,3);
\fill (0.5,0.5) circle (0.2);
\fill (0.5,1.5) circle (0.2);

\draw[color=Purple] (1.5,4.5) node[scale=0.8]{$\bigstar$};
\draw[color=Purple] [very thick] (1.5,4.5) [densely dashed] -- (1.5,2.5);

\draw[color=ForestGreen] (1.5,3.5) node[scale=0.8]{$\bigstar$};
\draw[color=ForestGreen] [very thick] (1.5,3.5) [densely dashed] -- (0.5,3.5);
\draw[color=ForestGreen] [very thick] (0.5,3.5) [densely dashed] -- (0.5,1.5);
\draw[color=ForestGreen] [very thick] (0.5,1.5) [densely dashed] -- (1.5,1.5);

\end{scope}

\begin{scope}[shift={(9,0)}]

\draw (0,0) grid[step=1] (2,5);
\draw (0,0) -- (2,2);
\draw [dashed] (0,5) -- (2,3);
\fill (0.5,0.5) circle (0.2);
\fill (0.5,2.5) circle (0.2);
\fill (1.5,1.5) circle (0.2);
\draw (1.5,3.5) node[scale=0.8]{$\bigstar$};

\begin{scope}[shift={(3,0)}]

\draw (0,0) grid[step=1] (2,5);
\draw (0,0) -- (2,2);
\draw [dashed] (0,5) -- (2,3);
\fill (0.5,0.5) circle (0.2);
\fill (0.5,2.5) circle (0.2);
\fill (1.5,1.5) circle (0.2);
\draw[color=Purple] (1.5,4.5) node[scale=0.8]{$\bigstar$};
\draw[color=Purple] [very thick] (1.5,4.5) [densely dashed] -- (1.5,2.5);
\draw[color=Purple] [very thick] (1.5,2.5) [densely dashed] -- (0.5,2.5);
\draw[color=Purple] [very thick] (0.5,2.5) [densely dashed] -- (0.5,3.5);

\end{scope}

\begin{scope}[shift={(6,0)}]

\draw (0,0) grid[step=1] (2,5);
\draw (0,0) -- (2,2);
\draw [dashed] (0,5) -- (2,3);
\fill (0.5,0.5) circle (0.2);
\fill (1.5,1.5) circle (0.2);
\fill (0.5,3.5) circle (0.2);
\draw[color=Purple] (1.5,4.5) node[scale=0.8]{$\bigstar$};
\draw[color=Purple] [very thick] (1.5,4.5) [densely dashed] -- (1.5,2.5);

\end{scope}

\end{scope}

\begin{scope}[shift={(18,0)}]

\draw (0,0) grid[step=1] (2,5);
\draw (0,0) -- (2,2);
\draw [dashed] (0,5) -- (2,3);
\fill (0.5,0.5) circle (0.2);
\fill (1.5,1.5) circle (0.2);
\fill (0.5,3.5) circle (0.2);
\fill (1.5,2.5) circle (0.2);

\begin{scope}[shift={(3,0)}]

\draw (0,0) grid[step=1] (2,5);
\draw (0,0) -- (2,2);
\draw [dashed] (0,5) -- (2,3);
\fill (0.5,0.5) circle (0.2);
\fill (1.5,1.5) circle (0.2);
\draw[color=Purple] (0.5,4.5) node[scale=0.8]{$\bigstar$};
\draw[color=Purple] [very thick] (0.5,4.5) [densely dashed] -- (0.5,3.5);
\fill (1.5,2.5) circle (0.2);

\end{scope}

\begin{scope}[shift={(6,0)}]

\draw (0,0) grid[step=1] (2,5);
\draw (0,0) -- (2,2);
\draw [dashed] (0,5) -- (2,3);
\fill (0.5,0.5) circle (0.2);
\fill (1.5,1.5) circle (0.2);
\draw[color=Purple] (0.5,4.5) node[scale=0.8]{$\bigstar$};
\draw[color=Purple] [very thick] (0.5,4.5) [densely dashed] -- (0.5,3.5);
\draw[color=Purple] [very thick] (0.5,3.5) [densely dashed] -- (1.5,3.5);
\draw[color=Purple] (1.5,3.5) node[scale=0.8]{$\bigstar$};
\draw[color=Purple] [very thick] (1.5,3.5) [densely dashed] -- (1.5,2.5);
\end{scope}

\end{scope}

\end{tikzpicture}

\end{center}
give
\begin{align*}
P_3(x) = &2^{1-\textcolor{ForestGreen}{0}} x^{\textcolor{Purple}{0}} (1+x)^{\textcolor{ForestGreen}{0}}
+ 2^{1-\textcolor{ForestGreen}{0}} x^{\textcolor{Purple}{1}} (1+x)^{\textcolor{ForestGreen}{0}}
+ 2^{2-\textcolor{ForestGreen}{1}} x^{\textcolor{Purple}{1}} (1+x)^{\textcolor{ForestGreen}{1}}\\
&+ 2^{1-\textcolor{ForestGreen}{0}} x^{\textcolor{Purple}{0}} (1+x)^{\textcolor{ForestGreen}{0}}
+ 2^{1-\textcolor{ForestGreen}{0}} x^{\textcolor{Purple}{1}} (1+x)^{\textcolor{ForestGreen}{0}}
+ 2^{1-\textcolor{ForestGreen}{0}} x^{\textcolor{Purple}{1}} (1+x)^{\textcolor{ForestGreen}{0}}\\
&+ 2^{0-\textcolor{ForestGreen}{0}} x^{\textcolor{Purple}{0}} (1+x)^{\textcolor{ForestGreen}{0}}
+ 2^{1-\textcolor{ForestGreen}{0}} x^{\textcolor{Purple}{1}} (1+x)^{\textcolor{ForestGreen}{0}}
+ 2^{2-\textcolor{ForestGreen}{0}} x^{\textcolor{Purple}{2}} (1+x)^{\textcolor{ForestGreen}{0}}\\
=& 2+2x+2x(1+x)+2+2x+2x+1+2x+4x^2\\
=& 6x^2+10x+5.
\end{align*}

The cardinality of $SDC_{2n-1}$ being $P_n(1) = l_n$ for all $n \geq~2$ is then obtained by setting $x = 1$ in Theorem \ref{theo:PnintermsofTno} in view of formula~\eqref{eq:TnogeneratesSpDC2nmoins1}. Section~\ref{sec:PnintermsofTno} is dedicated to the proof of Theorem \ref{theo:PnintermsofTno}.

\section{Proof of Theorem \ref{theo:PnintermsofTne}}
\label{sec:PnintermsofTne}

For all $n \geq~1$, let
\begin{align*}
E_n(x) = \sum_{T \in \mathcal{T}_n^e} 2^{\text{fr}(T)-1-\max(T)} x^{\max(T)}.
\end{align*}
By Remark \ref{rem:evenmaximalfreepoints} and the inequality $\text{fr}(T) \leq n$ for all $T \in \mathcal{T}_n^e$ (the free points of $T$ are of the kind $(j:i)$ such that $2n \geq~i \geq~2n+1-j \geq~n+1$, hence are located in the $n$ top rows), the polynomial $E_n(x)$ has positive integers coefficients and its degree is at most $n-1$. Moreover, if $T_{n,\max} \in \mathcal{T}_n^e$ is the tableau whose points are $(j:j)$ and $(j:2n+1-j)$ for all $j \in [n]$ (see Figure~\ref{fig:T4max} for the case $n = 4$), then $\text{b}(T_{n,\max}) = 0,\text{r}(T_{n,\max}) = n-1$ and $\text{g}(T_{n,\max}) =0$, so $\max(T_{n,\max}) = n-1$ and the degree of $E_n(x)$ is exactly $n-1$.

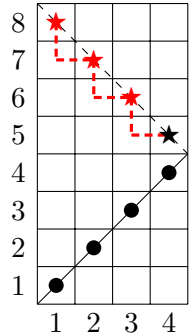
\begin{figure}[!htbp]

\begin{center}

\begin{tikzpicture}[scale=0.5]
\draw (-0.5,0.5) node[scale=1]{$1$};
\draw (-0.5,1.5) node[scale=1]{$2$};
\draw (-0.5,2.5) node[scale=1]{$3$};
\draw (-0.5,3.5) node[scale=1]{$4$};
\draw (-0.5,4.5) node[scale=1]{$5$};
\draw (-0.5,5.5) node[scale=1]{$6$};
\draw (-0.5,6.5) node[scale=1]{$7$};
\draw (-0.5,7.5) node[scale=1]{$8$};

\draw (0.5,-0.5) node[scale=1]{$1$};
\draw (1.5,-0.5) node[scale=1]{$2$};
\draw (2.5,-0.5) node[scale=1]{$3$};
\draw (3.5,-0.5) node[scale=1]{$4$};

\draw (0,0) grid[step=1] (4,8);
\draw (0,0) -- (4,4);
\draw [dashed] (0,8) -- (4,4);
\fill (0.5,0.5) circle (0.2);
\fill (1.5,1.5) circle (0.2);
\fill (2.5,2.5) circle (0.2);
\fill (3.5,3.5) circle (0.2);

\draw[color=red] (0.5,7.5) node[scale=0.8]{$\bigstar$};
\draw[color=red] [very thick] (0.5,7.5) [densely dashed] -- (0.5,6.5);
\draw[color=red] [very thick] (0.5,6.5) [densely dashed] -- (1.5,6.5);
\draw[color=red] (1.5,6.5) node[scale=0.8]{$\bigstar$};
\draw[color=red] [very thick] (1.5,6.5) [densely dashed] -- (1.5,5.5);
\draw[color=red] [very thick] (1.5,5.5) [densely dashed] -- (2.5,5.5);
\draw[color=red] (2.5,5.5) node[scale=0.8]{$\bigstar$};
\draw[color=red] [very thick] (2.5,5.5) [densely dashed] -- (2.5,4.5);
\draw[color=red] [very thick] (2.5,4.5) [densely dashed] -- (3.5,4.5);
\draw[color=black] (3.5,4.5) node[scale=0.8]{$\bigstar$};

\end{tikzpicture}

\end{center}

\caption{The tableau $T_{4,\max} \in \mathcal{T}_4^e$.}
\label{fig:T4max}

\end{figure}

For all $k \in [0,n-1]$, let $\mathcal{T}^e_{n,k}$ be the set of the tableaux $T \in \mathcal{T}_n^e$ such that $\max(T) = k$, and let $e_{n,k} = \sum_{T \in \mathcal{T}_{n,k}^e} 2^{\text{fr}(T)-1-k}$, so that
$$E_n(x) = \sum_{k = 0}^{n-1} e_{n,k} x^k.$$
Theorem~\ref{theo:PnintermsofTne} states that $E_n(x) = P_n(x)$, \textit{i.e.}, that $e_{n,k} = c_{n,k}$ for all $k \in [0,n-1]$. It is clear that $e_{1,0} = 1 = c_{1,0}$. We will prove the rest by checking that $(e_{n,k})_{0 \leq k \leq n}$ satisfies the induction formulas \eqref{eq:an0} ,\eqref{eq:ank},\eqref{eq:anmoins1}. To do so, we first define in the next section a surjection from $\mathcal{T}_n^e$ to $\mathcal{T}_{n-1}^e$.

\subsection{A surjection $\Pi : \mathcal{T}_n^e \twoheadrightarrow \mathcal{T}_{n-1}^e (n \geq~2)$}
Consider $T \in \mathcal{T}_n^e$ and the integers $(j_{n-1},j_{n+1}) \in [n]^2$ such that the points of $L_{n-1}^T$ and $L_{n+1}^T$ are located in $C_{j_{n-1}}^T$ and $C_{j_{n+1}}^T$ respectively. We define $(i_{\min},i_{\max})$ as $(n-1,n+1)$ if $j_{n-1} \leq j_{n+1}$, as $(n+1,n-1)$ otherwise.

We define two new paths
\begin{align*}
(B'(T),R'(T)) = \begin{cases}
(S_T(i_{\min}),S_T(i_{\max})) &\text{if $C_{n-1}^T$ contains an element}\\&\text{of $R(T)$},\\
(S_T(i_{\max}),S_T(i_{\min})) &\text{if $C_{n-1}^T$ contains an element}\\&\text{of $\mathcal{G}(T)$},\\
(S_T(n+1),S_T(n-1)) &\text{otherwise and if $p_{n+1}^T$ is free,}\\
(S_T(n-1),S_T(n+1)) &\text{otherwise},
\end{cases}
\end{align*}
and one new set $\mathcal{G}'(T)$ defined as the free points of $T$ of the kind $p = (j:i)$ such that the other point of $C_j^T$ is an element of $B'(T)$ and $\text{root}_T(j:i) =j$. Their respective numbers of elements located in $C_1^T,C_2^T,\dots,C_{n-1}^T$ are denoted by $\text{b}'(T)$, $\text{r}'(T)$ and $\text{g}'(T)$. Note that $S_T(n+1) = ((n:n+1))$ for all $T$ in which $p_{n+1}^T$ is free, so $r'(T)$ or $b'(T)$ can equal $0$. However, the first element of $S_T(n-1)$, the point $p_{n-1}^T$, is always of the kind $(j:n-1)$ for some $j<n$, so $\text{b}'(T) + \text{r}'(T) \geq~1$ in general. The set of the elements of $B'(T),R'(T)$ or $\mathcal{G}'(T)$ is denoted by $\text{Max}'(T)$ whose cardinality is $\max'(T) = \text{b}'(T) + \text{r}'(T)+\text{g}'(T)$. For example, consider the tableau $T_1 \in \mathcal{T}_7^e$ from Figure~\ref{fig:examplepaths}, we depict in Figure~\ref{fig:newpaths} the paths $B'(T_1) = ((5:6),(6:9))$ and $R'(T_1) = ((6:8))$ by painting them in light blue and orange respectively (there the set $\mathcal{G}(T_1)$ is empty).

\begin{figure}[!htbp]

\begin{center}

\begin{tikzpicture}[scale=0.5]
\draw (-0.5,0.5) node[scale=1]{$1$};
\draw (-0.5,1.5) node[scale=1]{$2$};
\draw (-0.5,2.5) node[scale=1]{$3$};
\draw (-0.5,3.5) node[scale=1]{$4$};
\draw (-0.5,4.5) node[scale=1]{$5$};
\draw (-0.5,5.5) node[scale=1]{$6$};
\draw (-0.5,6.5) node[scale=1]{$7$};
\draw (-0.5,7.5) node[scale=1]{$8$};
\draw (-0.5,8.5) node[scale=1]{$9$};
\draw (-0.5,9.5) node[scale=1]{$10$};
\draw (-0.5,10.5) node[scale=1]{$11$};
\draw (-0.5,11.5) node[scale=1]{$12$};
\draw (-0.5,12.5) node[scale=1]{$13$};
\draw (-0.5,13.5) node[scale=1]{$14$};

\draw (0.5,-0.5) node[scale=1]{$1$};
\draw (1.5,-0.5) node[scale=1]{$2$};
\draw (2.5,-0.5) node[scale=1]{$3$};
\draw (3.5,-0.5) node[scale=1]{$4$};
\draw (4.5,-0.5) node[scale=1]{$5$};
\draw (5.5,-0.5) node[scale=1]{$6$};
\draw (6.5,-0.5) node[scale=1]{$7$};

\draw (0,0) grid[step=1] (7,14);
\draw (0,0) -- (7,7);
\draw [dashed] (0,14) -- (7,7);
\fill (0.5,0.5) circle (0.2);
\fill (0.5,1.5) circle (0.2);
\fill (2.5,2.5) circle (0.2);
\fill (3.5,3.5) circle (0.2);
\fill (4.5,4.5) circle (0.2);

\fill (1.5,6.5) circle (0.2);

\draw (5.5,8.5) node[scale=0.8]{$\bigstar$};
\draw (6.5,9.5) node[scale=0.8]{$\bigstar$};
\draw (6.5,10.5) node[scale=0.8]{$\bigstar$};
\draw (2.5,11.5) node[scale=0.8]{$\bigstar$};
\draw (1.5,12.5) node[scale=0.8]{$\bigstar$};
\draw (3.5,13.5) node[scale=0.8]{$\bigstar$};

\draw[color=red] (3.5,13.5) node[scale=0.8]{$\bigstar$};
\draw[color=red] [very thick] (3.5,13.5) [densely dashed] -- (3.5,9.5);
\draw[color=red] [very thick] (3.5,9.5) [densely dashed] -- (6.5,9.5);

\fill[color=Peach] (5.5,7.5) circle (0.2);

\fill[color=blue] (1.5,6.5) circle (0.2);
\draw[color=blue] [very thick] (1.5,6.5) [densely dashed] -- (1.5,1.5);
\draw[color=blue] [very thick] (1.5,1.5) [densely dashed] -- (0.5,1.5);
\draw[color=blue] [very thick] (0.5,1.5) [densely dashed] -- (0.5,12.5);
\draw[color=blue] [very thick] (0.5,12.5) [densely dashed] -- (1.5,12.5);
\draw[color=blue] [very thick] (1.5,12.5) [densely dashed] -- (1.5,11.5);
\draw[color=blue] [very thick] (1.5,11.5) [densely dashed] -- (2.5,11.5);
\draw[color=blue] (2.5,11.5) node[scale=0.8]{$\bigstar$};
\draw[color=blue] [very thick] (2.5,11.5) [densely dashed] -- (2.5,10.5);
\draw[color=blue] [very thick] (2.5,10.5) [densely dashed] -- (6.5,10.5);

\fill[color=Turquoise] (4.5,5.5) circle (0.2);
\draw[color=Turquoise] [very thick] (4.5,5.5) [densely dashed] -- (4.5,8.5);
\draw[color=Turquoise] [very thick] (4.5,8.5) [densely dashed] -- (5.5,8.5);
\draw[color=Turquoise] (5.5,8.5) node[scale=0.8]{$\bigstar$};

\draw[color=ForestGreen] [very thick] (1.3,1.3) [densely dashed] -- (0.3,1.3);
\draw[color=ForestGreen] [very thick] (0.3,1.3) [densely dashed] -- (0.3,12.7);
\draw[color=ForestGreen] [very thick] (0.3,12.7) [densely dashed] -- (1.3,12.7);
\draw[color=ForestGreen] (1.5,12.5) node[scale=0.8]{$\bigstar$};

\end{tikzpicture}

\end{center}

\caption{The tableau $T_1 \in \mathcal{T}_7^e$, such that $(\text{b}'(T_1),\text{r}'(T_1),\text{g}'(T_1)) = (\textcolor{Turquoise}{2},\textcolor{Peach}{1},\textcolor{YellowGreen}{0})$.}
\label{fig:newpaths}

\end{figure}

In order to construct an element $\Pi(T)$ of $\mathcal{T}_{n-1}^e$ from $T \in \mathcal{T}_n^e$, we "project" all points of $T$ that are not elements of $\text{Max}(T)$ or $\text{Max}'(T)$  in a tableau $(n-1)\times(2n-2)$, then we construct $\text{Max}(\Pi(T))$ as a "union" of $\text{Max}(T)$ and $\text{Max}'(T)$, in a meaning that we will make explicit.

\begin{dfn}
\label{defi:insertion}
Let $j \in [n]$ and consider a tableau $T$ satisfying the conditions of Definition~\ref{defi:TPath}. Let $i_f \in [j,2n-j] \sqcup \{2n\}$ and let $i_0 \in [j,2n]$ be the unique integer such that $\text{root}_T(j:i_0) = i_f$ in view of Proposition \ref{prop:TPathstationnaire}. We define the insertion of a point in the box $(j:i_f)$ of $T$ as the plotting of a point in the box $(j:i_0)$.

Note that if $L_{i_f}^T$ has its $j-1$ first boxes empty, then $i_0 = i_f$.
\end{dfn}
For example, if $T$ is a $7 \times 14$ tableau whose first column is the same as in Figure~\ref{fig:newpaths}, then the insertions of points in the boxes $(2:2)$ and $(2:7)$ of $T$ result in plotting points in the boxes $(2:13)$ and $(2:7)$ of $T$ respectively.

In general, let us now label the points of $\text{Max}(T)$ and $\text{Max}'(T)$ with letters $\rho,\beta$ or $\gamma$. The points labeled with $\beta$ (respectively $\rho,\gamma$) will correspond with the elements of $B(\Pi(T))$ (respectively $R(\Pi(T)), \mathcal{G}(\Pi(T))$).

\begin{dfn}
\label{defi:labels}
Let $T \in \mathcal{T}_n^e (n \geq~2)$. We label the elements of $\text{Max}(T)$ and $\text{Max}'(T)$ as follows.
\begin{itemize}
\item If a point $p_1$ belongs to $B(T)$ (respectively $R'(T)$) and the other point $p_2$ in the same column belongs to $B'(T)$ (respectively $R(T)$), then we label $p_1$ with $\beta$ and $p_2$ with $\gamma$.
\item If a point $p$ belongs to $B(T)$ or $B'(T)$ (respectively $R(T)$ or $R'(T)$, $\mathcal{G}(T)$ or $\mathcal{G}'(T)$) and is not concerned by the first rule, then we label it with $\beta$ (respectively $\rho,\gamma$).
\end{itemize}
\end{dfn}

\begin{dfn}[Map $\Pi : \mathcal{T}_n^e \rightarrow \mathcal{T}_{n-1}^e$]
\label{defi:Pi}
Let $T \in \mathcal{T}_n^e$. We first define a rectangular tableau $\Pi_0(T)$, made of $n-1$ columns and $2n-2$ rows, that contains a point in its box $\left(j:\widetilde{i}\right)$ for all points $p = (j:i)$ of $T$ that doesn't belong to $\text{Max}(T)$ or $\text{Max}'(T)$ (in particular $j < n$ and $i \not\in\{n,n+1\}$), where
$$\widetilde{i} = \begin{cases}
i &\text{if $i <n$},\\
i-2 &\text{if $i>n+1$.}
\end{cases}$$
For the example of $T_1 \in \mathcal{T}_7^e$ depicted in Figure~\ref{fig:newpaths}, we obtain the tableau $\Pi_0(T_1)$ of Figure~\ref{fig:Pi0T1}.

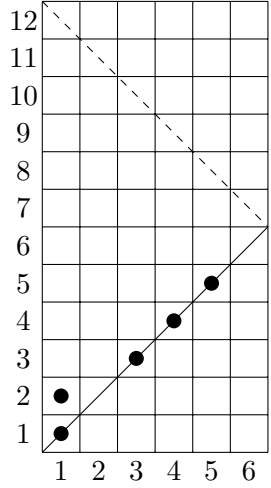
\begin{figure}[!htbp]
\begin{center}

\begin{tikzpicture}[scale=0.5]
\draw (-0.5,0.5) node[scale=1]{$1$};
\draw (-0.5,1.5) node[scale=1]{$2$};
\draw (-0.5,2.5) node[scale=1]{$3$};
\draw (-0.5,3.5) node[scale=1]{$4$};
\draw (-0.5,4.5) node[scale=1]{$5$};
\draw (-0.5,5.5) node[scale=1]{$6$};
\draw (-0.5,6.5) node[scale=1]{$7$};
\draw (-0.5,7.5) node[scale=1]{$8$};
\draw (-0.5,8.5) node[scale=1]{$9$};
\draw (-0.5,9.5) node[scale=1]{$10$};
\draw (-0.5,10.5) node[scale=1]{$11$};
\draw (-0.5,11.5) node[scale=1]{$12$};

\draw (0.5,-0.5) node[scale=1]{$1$};
\draw (1.5,-0.5) node[scale=1]{$2$};
\draw (2.5,-0.5) node[scale=1]{$3$};
\draw (3.5,-0.5) node[scale=1]{$4$};
\draw (4.5,-0.5) node[scale=1]{$5$};
\draw (5.5,-0.5) node[scale=1]{$6$};

\draw (0,0) grid[step=1] (6,12);
\draw (0,0) -- (6,6);
\draw [dashed] (0,12) -- (6,6);
\fill (0.5,0.5) circle (0.2);
\fill (0.5,1.5) circle (0.2);
\fill (2.5,2.5) circle (0.2);
\fill (3.5,3.5) circle (0.2);
\fill (4.5,4.5) circle (0.2);

\end{tikzpicture}

\end{center}
\caption{The tableau $\Pi_0(T_1)$.}
\label{fig:Pi0T1}

\end{figure}
Afterwards, let $X_T$ be the empty set. For $j$ from $1$ to $n-1$, if $C_j^T$ contains a point $p$ labeled with $\beta$ (respectively $\rho,\gamma$), we insert a point in the box $(j:n-1)$ of $\Pi_0(T)$ (respectively the box $(j:2n-2)$, the box $(j:j)$), following Definition~\ref{defi:insertion}. We say that \textit{we have inserted $p$ in $\Pi_0(T)$}. Also, if $p \in \text{Max}(T)$, let $p_{\text{new}}$ be the point just plotted in $\Pi_0(T)$ following the latter insertion, then we add $p_{\text{new}}$ to $X_T$.

Let $\Pi(T)$ be the tableau obtained at the end of this algorithm. It is straightforward by Definition \ref{defi:insertion} that $\Pi(T)$ is an element of $\mathcal{T}_{n-1}^e$, and that $X_T \subsetneq \text{Omax}(\Pi(T))$ (it is not equal because the point $p_{n-1}^{\Pi(T)}$ is an element of $\text{Omax}(\Pi(T))$ that results from the insertion in $\Pi_0(T)$ of the point $p_{n-1}^T$, but $p_{n-1}^T$ belongs to $\text{Max}'(T)$, not to $\text{Max}(T)$, hence $p_{n-1}^{\Pi(T)} \not\in X_T$). The set $X_T$ will be useful to reconstruct $T$ from $\Pi(T)$ (see Proposition \ref{prop:Pimoins1}).

For example, the tableau $\Pi(T_1) \in \mathcal{T}_6^e$ is as depicted in Figure~\ref{fig:PiT1}, and $X_{T_1} = \{(2:6),(2:11),(3:10),(4:12)\}$.

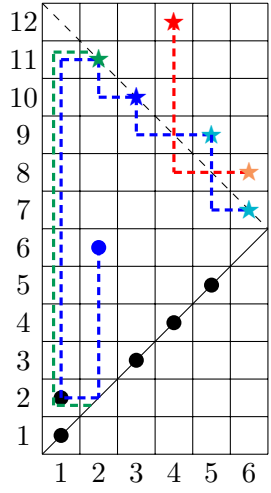
\begin{figure}[!htbp]
\begin{center}

\begin{tikzpicture}[scale=0.5]
\draw (-0.5,0.5) node[scale=1]{$1$};
\draw (-0.5,1.5) node[scale=1]{$2$};
\draw (-0.5,2.5) node[scale=1]{$3$};
\draw (-0.5,3.5) node[scale=1]{$4$};
\draw (-0.5,4.5) node[scale=1]{$5$};
\draw (-0.5,5.5) node[scale=1]{$6$};
\draw (-0.5,6.5) node[scale=1]{$7$};
\draw (-0.5,7.5) node[scale=1]{$8$};
\draw (-0.5,8.5) node[scale=1]{$9$};
\draw (-0.5,9.5) node[scale=1]{$10$};
\draw (-0.5,10.5) node[scale=1]{$11$};
\draw (-0.5,11.5) node[scale=1]{$12$};

\draw (0.5,-0.5) node[scale=1]{$1$};
\draw (1.5,-0.5) node[scale=1]{$2$};
\draw (2.5,-0.5) node[scale=1]{$3$};
\draw (3.5,-0.5) node[scale=1]{$4$};
\draw (4.5,-0.5) node[scale=1]{$5$};
\draw (5.5,-0.5) node[scale=1]{$6$};

\draw (0,0) grid[step=1] (6,12);
\draw (0,0) -- (6,6);
\draw [dashed] (0,12) -- (6,6);
\fill (0.5,0.5) circle (0.2);
\fill (0.5,1.5) circle (0.2);
\fill (2.5,2.5) circle (0.2);
\fill (3.5,3.5) circle (0.2);
\fill (4.5,4.5) circle (0.2);

\draw[color=red] (3.5,11.5) node[scale=0.8]{$\bigstar$};
\draw[color=red] [very thick] (3.5,11.5) [densely dashed] -- (3.5,7.5);
\draw[color=red] [very thick] (3.5,7.5) [densely dashed] -- (5.5,7.5);
\draw[color=Peach] (5.5,7.5) node[scale=0.8]{$\bigstar$};

\fill[color=blue] (1.5,5.5) circle (0.2);
\draw[color=blue] [very thick] (1.5,5.5) [densely dashed] -- (1.5,1.5);
\draw[color=blue] [very thick] (1.5,1.5) [densely dashed] -- (0.5,1.5);
\draw[color=blue] [very thick] (0.5,1.5) [densely dashed] -- (0.5,10.5);
\draw[color=blue] [very thick] (0.5,10.5) [densely dashed] -- (1.5,10.5);
\draw[color=blue] [very thick] (1.5,10.5) [densely dashed] -- (1.5,9.5);
\draw[color=blue] [very thick] (1.5,9.5) [densely dashed] -- (2.5,9.5);
\draw[color=blue] (2.5,9.5) node[scale=0.8]{$\bigstar$};
\draw[color=blue] [very thick] (2.5,9.5) [densely dashed] -- (2.5,8.5);
\draw[color=blue] [very thick] (2.5,8.5) [densely dashed] -- (4.5,8.5);
\draw[color=Turquoise] (4.5,8.5) node[scale=0.8]{$\bigstar$};
\draw[color=blue] [very thick] (4.5,8.5) [densely dashed] -- (4.5,6.5);
\draw[color=blue] [very thick] (4.5,6.5) [densely dashed] -- (5.5,6.5);
\draw[color=Turquoise] (5.5,6.5) node[scale=0.8]{$\bigstar$};

\draw[color=ForestGreen] [very thick] (1.3,1.3) [densely dashed] -- (0.3,1.3);
\draw[color=ForestGreen] [very thick] (0.3,1.3) [densely dashed] -- (0.3,10.7);
\draw[color=ForestGreen] [very thick] (0.3,10.7) [densely dashed] -- (1.3,10.7);
\draw[color=ForestGreen] (1.5,10.5) node[scale=0.8]{$\bigstar$};

\end{tikzpicture}

\end{center}
\caption{The tableau $\Pi(T_1) \in \mathcal{T}_6^e$.}
\label{fig:PiT1}
\end{figure}

Note that $(\text{b}(\Pi(T_1)),\text{r}(\Pi(T_1)),\text{g}(\Pi(T_1))) = (3,1,1)$.
\end{dfn}

\begin{rem}
\label{rem:newstat}
For all $T \in \mathcal{T}_n^e$, we have
\begin{align*}
\max(\Pi(T)) = \max(T) + \text{max}'(T)-2.
\end{align*}
Indeed, the insertions in $\Pi_0(T)$ of the two points of $C_{n-1}^T$, which both belong to $\text{Max}(T)$ or $\text{Max}'(T)$ (because their roots belong to $\{n-1,n,n+1,2n\}$), produce the two points of $C_{n-1}^{\Pi(T)}$, in other words, the last element of $B(\Pi(T))$ and the last element of $R(\Pi(T))$, which are not counted by the statistics $\text{b}$ and $\text{r}$.

In particular, since $\max'(T) \geq~1$ in general (the set $\text{Max}'(T)$ always contains $p_{n-1}^T$), we have $$\max(\Pi(T)) \in [\max(T)-1,n-2].$$

For the example of $T_1 \in \mathcal{T}_7^e$, we obtain $\max(\Pi(T_1)) = (2+1+1)+(2+1)-2 = 5$.
\end{rem}

We now intend to construct $\Pi^{-1}(T_0)$ for all $T_0 \in \mathcal{T}_{n-1}^e$. To do so, we first need to define labels for the elements of $\text{Omax}(T_0)$, given a subset $X \subsetneq \Omax(T_0)$.

\begin{dfn}
\label{defi:newlabels}
For a tableau $T_0 \in \mathcal{T}_{n-1}^e$ and a subset $X \subsetneq \text{Omax}(T_0)$, let us define labels of the elements of $\text{Omax}(T_0)$ as follows.
\begin{itemize}
\item For a point $p \in \mathcal{G}(T_0)$, let $q \neq p$ be the other point of the column of $p$ (the point $q$ is then an element of $B(T_0)$).
\begin{itemize}
\item If neither $p$ nor $q$ belong to $X$, we label $p$ and $q$ with the respective letters $\mathfrak{g}'$ and $\mathfrak{b}'$.
\item If only $q \in X$, we label $p$ and $q$ with the respective letters $\mathfrak{b}'$ and $\mathfrak{b}$.
\item If only $p \in X$, we label $p$ and $q$ with the respective letters $\mathfrak{r}$ and $\mathfrak{r}'$.
\item If both $p$ and $q$ belong to $X$, we label $p$ and $q$ with the respective letters $\mathfrak{g}$ and $\mathfrak{b}$.
\end{itemize}
\item For all points $p$ among the $b_0(T)$ first elements of $B(T_0)$ (respectively among the $r(T_0)$ first elements of $R(T_0)$) that is not concerned by the first case, if $p \not\in X$, then we label $p$ with $\mathfrak{b}'$ (respectively $\mathfrak{r}'$), otherwise we label it with $\mathfrak{b}$ (respectively $\mathfrak{r}$).
\item Let $p^b$ (respectively $p^r$) be the last element of $B(T_0)$ (respectively $R(T_0)$). Recall that $p^b$ and $p^r$ are the two points of $C_{n-1}^{T_0}$.
\begin{itemize}
\item  If $p^r \not\in X$, we label $p^r$ with the letter $\mathfrak{r}'$. Afterwards, we label $p^b$ with the letter $\mathfrak{b}'$ if $p^b \not\in X$, with the letter $\mathfrak{b}$ otherwise.
\item Otherwise, let $j_{\min}$ be the smallest integer $j \in [n-1]$ such that $C_j^{T_0}$ contains an element of $\text{Omax}(T_0) \backslash X$. If $j_{\min} = n-1$, then $\Omax(T_0) = \{p^b,p^r\}$, hence $X = \{p^r\}$, and we label $p^b$ and $p^r$ with the respective letters $\mathfrak{b}'$ and $\mathfrak{r}$. If $j_{\min} < n-1$, all the elements of $\Omax(T_0)$ that appear in $C_{j_{\min}}^{T_0}$ have already been labeled. We then have two cases.
\begin{itemize}
\item If $C_{j_{\min}}^{T_0}$ contains an element labeled with $\mathfrak{b}'$, then we label $p^r$ with the letter $\mathfrak{r}$. Afterwards, we label $p^b$ with the letter $\mathfrak{b}'$ if $p^b \not\in X$, with the letter $\mathfrak{b}$ otherwise.
\item Otherwise, we label $p^b$ and $p^r$ with the respective letters $\mathfrak{r}'$ and $\mathfrak{r}$ if $p^b \not\in X$, with the respective letters $\mathfrak{b}$ and $\mathfrak{g}$ otherwise.
\end{itemize}
\end{itemize}
\end{itemize}
\end{dfn}

\begin{rem}
\label{rem:labelsPi}
In the context of Definition \ref{defi:newlabels}, the elements of $\text{Omax}(T_0)$ that are labeled with $\mathfrak{b},\mathfrak{r}$ or $\mathfrak{g}$ are exactly the elements of $X$.
\end{rem}

\begin{dfn}
\label{defi:Pimoins1}
Let $i \in [0,n-2]$, and $T_0 \in \mathcal{T}_{n-1,i}^e$. We consider a subset $X \subsetneq \text{Omax}(T_0)$, whose cardinality $|X|$ is denoted by $k$. Note that $\text{omax}(T_0) = i+2$ hence $k \in [0,i+1]$. We intend to define a set $\mathfrak{T}(T_0,X) \subset \mathcal{T}_{n}^e$ whose cardinality is either $1$ or $2$ following three situations described below. Consider the labels introduced in Definition \ref{defi:newlabels}.
\begin{enumerate}[label=\arabic*.]
\item  Let $\mathcal{S}_1$ be the situation where $k \leq i$ and no point of the last column $C_{n-1}^{T_0}$ of $T_0$ is labeled with $\mathfrak{r}$ or $\mathfrak{g}$.
\item Let $\mathcal{S}_2$ be the situation where~:
\begin{itemize}
\item either $k = i+1$;
\item or one of the points of $C_{n-1}^{T_0}$ is labeled with $\mathfrak{r}$ or $\mathfrak{g}$, and there are no two elements of $\Omax(T_0) \backslash X$ labeled with $\mathfrak{b}'$ and $\mathfrak{r}'$.
\end{itemize}
\item Let $\mathcal{S}_3$ be the remaining situation, \textit{i.e.}, where $k \leq i$, one of the points of $C_{n-1}^{T_0}$ is labeled with $\mathfrak{r}$ or $\mathfrak{g}$, and there exist two elements of $\Omax(T_0) \backslash X$ labeled with $\mathfrak{b}'$ and $\mathfrak{r}'$.
\end{enumerate}
In any situation, first consider a $n \times 2n$ tableau $\mathfrak{T}_0(T_0)$ that contains points only in its boxes $\left( j:\overline{i} \right)$ for all points $(j:i)$ of $T_0$ that doesn't belong to $\text{Omax}(T_0)$, where $\overline{i}$ is defined as $i$ if $i < n$, as $i+2$ otherwise. Let us set $T^1 = T^2 = T^{3/} = T^{3\backslash} = \mathfrak{T}_0(T_0)$. For $j$ from $1$ to $n-1$, assume that $C_j^{T_0}$ contains an element $p \in \text{Omax}(T_0)$ (otherwise $T^1,T^2,T^{3/},T^{3\backslash}$ already contain two points in their $j$-th columns and we do nothing).
\begin{itemize}
\item If the label of $p$ is $\mathfrak{b}$ (respectively $\mathfrak{r},\mathfrak{g},\mathfrak{g}'$), we insert a point in the box $(j:n)$ (respectively $(j:2n),(j:j),(j:j)$) of $T^1,T^2,T^{3/}$ and $T^{3\backslash}$.
\item If the label of $p$ is $\mathfrak{b}'$ (respectively $\mathfrak{r}'$),
\begin{itemize}
\item in the situation $\mathcal{S}_1$, we insert a point in the box $(j:n-1)$ (respectively $(j:n+1)$) of $T^1$;
\item in the situation $\mathcal{S}_2$, we insert a point in the box $(j:n-1)$ of $T^2$;
\item in the situation $\mathcal{S}_3$,
\begin{itemize}
\item if the points of the last column of $T_0$ have the labels $\mathfrak{r}'$ and $\mathfrak{r}$, or $\mathfrak{b}$ and $\mathfrak{g}$, then we insert a point in the box $(j:n+1)$ (respectively $(j:n-1)$) of $T^{3/}$, and we insert a point in the box $(j:n-1)$ (respectively $(j:n+1)$) of $T^{3\backslash}$;
\item otherwise, we insert a point in the box $(j:n-1)$ (respectively $(j:n+1)$) of $T^{3/}$, and we insert a point in the box $(j:n+1)$ (respectively $(j:n-1)$) of $T^{3\backslash}$.
\end{itemize}
\end{itemize}
\end{itemize}
We say that \textit{we inserted $p$ in $T^1$} (respectively $T^2$, $T^{3/}$ and $T^{3 \backslash}$) in the situation $\mathcal{S}_1$ (respectively $\mathcal{S}_2,\mathcal{S}_3$). At the end of this algorithm, we finally insert two points in the boxes $(n:n)$ and $(n:2n)$ of $T^1$ (respectively $T^2$, $T^{3/}$ and $T^{3 \backslash}$) in the situation $\mathcal{S}_1$ (respectively $\mathcal{S}_2,\mathcal{S}_3$).

The choice of $T_0$ and $X \subsetneq \text{Omax}(T_0)$ raises one of the three situations $\mathcal{S}_1,\mathcal{S}_2,\mathcal{S}_3$, following which either $T^1$, or $T^2$, or both $T^{3/}$ and $T^{3\backslash}$ belong to $\mathcal{T}_{n}^e$ at the end of the above algorithm. Let $\mathfrak{T}(T_0,X) \subset \mathcal{T}_n^e$ be defined as $\{T^1\}$ (respectively $\{T^2\},\{T^{3/},T^{3 \backslash}\}$) in the situation $\mathcal{S}_1$ (respectively $\mathcal{S}_2,\mathcal{S}_3$).
\end{dfn}

\begin{rem}
\label{rem:maximalPimoins1}
If $T \in \mathfrak{T}(T_0,X)$, then by construction the cardinality of $\text{Max}(T)$ is the number of points of $\text{Omax}(T_0)$ labeled with $\mathfrak{b},\mathfrak{r}$ or $\mathfrak{g}$, in other words the cardinality $k$ of $X$ by Remark \ref{rem:labelsPi}, and $\mathfrak{T}(T_0,X) \subset \mathcal{T}_{n,k}^e$.
\end{rem}

\begin{rem}
\label{rem:freepointsofantecedents}
In the context of Definition \ref{defi:Pimoins1}, if a tableau is defined in the situation $\mathcal{S}_1$ or $\mathcal{S}_3$, then the point $p_{n+1}^T$ is not free, in other words the box $(n:n+1)$ of $T$ is empty, whereas in the situation $\mathcal{S}_2$ the point $p_{n+1}^T$ is $(n:n+1)$ hence is free. Indeed, in the situation $\mathcal{S}_1$, the points of $C_{n-1}^{T_0}$ have the labels $\mathfrak{b}'$ and $\mathfrak{r}'$, so the points of $C_{n-1}^T$ are obtained by inserting two points in the boxes $(n-1:n-1)$ and $(n-1:n+1)$ of $T^1$, which implies that $p_{n+1}^T$ is located in the $n-1$ first columns of $T$ and $p_{n+1}^T$ is then not free ; in the situation $\mathcal{S}_3$, since there exists two elements of $\Omax(T_0) \backslash X$ with the labels $\mathfrak{b}'$ and $\mathfrak{r}'$, by construction we insert a point in boxes $(j_1:n-1)$ and $(j_2:n+1)$ for some $j_1,j_2<n$ in both tableaux $T^{3/}$ and $T^{3 \backslash}$, so $p_{n+1}^T$ is also not free ; finally, in the situation $\mathcal{S}_2$, we never insert a point in any box $(j:n+1)$ for all $j <n$, so $p_{n+1}^T$ is necessarily located in $C_n^T$ hence is free.

Let us now compare the number of free points in $T_0$ and in the elements of $\mathfrak{T}(T_0,X)$. First of all, note that the number of free points of $T_0$ is the number of free points in $\mathfrak{T}_0(T_0)$ (let's denote it by $\text{fr}_0(T_0)$), plus $\text{omax}(T_0)-1 = i+1$ in view of Remark \ref{rem:evenmaximalfreepoints}. Likewise, if $T \in \mathfrak{T}(T_0,X)$, by construction $$\text{fr}(T) = \text{fr}_0(T_0) + (\text{max}(T)+1) + \text{max}'(T) - \eta$$ where $\eta$ is the number of non-free points of $\text{Max}'(T)$, \textit{i.e.}, $\eta = 1$ if $p_{n+1}^T$ is free (in which case $p_{n-1}^T$ is the only non-free point of $\text{Max}'(T)$), which happens exactly in the situation $\mathcal{S}_2$, or $\eta = 2$ otherwise, in the situations $\mathcal{S}_1$ and $\mathcal{S}_3$. Since $\text{max}(T) = k$ (in view of Remark \ref{rem:maximalPimoins1}) and $\max'(T) = \text{omax}(T_0) - |X| = i+2-k$ by construction, we then have
$$\begin{cases}
\text{fr}(T) = \text{fr}_0(T_0) + i+3 - 2 = \text{fr}(T_0) &\text{in the situation $\mathcal{S}_1$ or $\mathcal{S}_3$},\\
\text{fr}(T) = \text{fr}_0(T_0) + i+3 - 1 = \text{fr}(T_0)+1 &\text{in the situation $\mathcal{S}_2$.}
\end{cases}$$
\end{rem}

\begin{prop}
\label{prop:Pimoins1}
For all $i \in [0,n-2]$ and $T_0 \in \mathcal{T}_{n-1,i}$, we have $\Pi^{-1}(T_0) \subset \bigsqcup_{k = 0}^{i+1} \mathcal{T}_{n,k}^e$, and for all $k \in [0,i+1]$,
\begin{equation}
\label{eq:Pimoins1}
\Pi^{-1}(T_0) \cap \mathcal{T}_{n,k}^e  = \bigsqcup_{X \subset \text{Omax}(T_0),|X|=k} \mathfrak{T}(T_0,X).
\end{equation}
Therefore, $\Pi$ is surjective.
\end{prop}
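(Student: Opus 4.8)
The plan is to prove that the construction $\mathfrak{T}$ of Definition~\ref{defi:Pimoins1} inverts the map $\Pi$ of Definition~\ref{defi:Pi} in a grade-preserving way, so that the fibers of $\Pi$ decompose exactly as in \eqref{eq:Pimoins1}. I would organize everything around the pairing $T \mapsto (\Pi(T),X_T)$, where $X_T \subsetneq \text{Omax}(\Pi(T))$ is the subset recorded during Definition~\ref{defi:Pi}, and argue that the pair $(T_0,X)$ determines $T$ up to exactly the one- or two-fold ambiguity already built into $\mathfrak{T}(T_0,X)$.

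The containment $\Pi^{-1}(T_0)\subset\bigsqcup_{k=0}^{i+1}\mathcal{T}_{n,k}^e$ is immediate from Remark~\ref{rem:newstat}: if $\Pi(T)=T_0$ with $\max(T_0)=i$, then $i=\max(\Pi(T))=\max(T)+\max'(T)-2\geq\max(T)-1$ forces $\max(T)\leq i+1$. The substance is the two reverse-compatibility facts. First, for each $X\subsetneq\text{Omax}(T_0)$ with $|X|=k$ and each $T\in\mathfrak{T}(T_0,X)$, one should have $\Pi(T)=T_0$ and $T\in\mathcal{T}_{n,k}^e$; the membership $T\in\mathcal{T}_{n,k}^e$ is exactly Remark~\ref{rem:maximalPimoins1}, and $X_T=X$ follows from Remark~\ref{rem:labelsPi} together with the label bookkeeping. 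Second, for $T\in\Pi^{-1}(T_0)\cap\mathcal{T}_{n,k}^e$ one should have $X_T\subsetneq\text{Omax}(T_0)$ with $|X_T|=\max(T)=k$ and $T\in\mathfrak{T}(T_0,X_T)$.

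The core of both facts is a label-matching lemma: the labels $\beta,\rho,\gamma$ placed on $\text{Max}(T)\cup\text{Max}'(T)$ by Definition~\ref{defi:labels} correspond, under $\Pi$, to the labels $\mathfrak{b}/\mathfrak{b}',\mathfrak{r}/\mathfrak{r}',\mathfrak{g}/\mathfrak{g}'$ placed on $\text{Omax}(T_0)$ by Definition~\ref{defi:newlabels} relative to $X=X_T$, where the primed letters mark the points coming from $\text{Max}'(T)$, that is the points of $\text{Omax}(T_0)\setminus X$, and the unprimed letters mark $X$. Granting this correspondence, the projections $\Pi_0$ and $\mathfrak{T}_0$ are visibly inverse on the non-maximal points, since the row maps $i\mapsto\tilde{i}$ and $i\mapsto\bar{i}$ are mutually inverse off the middle rows $n,n+1$; and the insertion operation of Definition~\ref{defi:insertion}, which is well defined and invertible precisely because $i\mapsto\text{root}_T(j:i)$ is a bijection by Proposition~\ref{prop:TPathstationnaire}, reconstructs each maximal path from its root. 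I would verify the label-matching by tracing each path ($B,R,\mathcal{G}$ and $B',R',\mathcal{G}'$) through the projection, checking for instance that an element of $B(T)$, of root $n$, produces a point of root $n-1$ (or $2n-2$) in $\Pi(T)$, which is the origin of the insertion targets $(j:n-1),(j:2n-2),(j:j)$ in Definition~\ref{defi:Pimoins1}. The delicate first rule of Definition~\ref{defi:labels}, turning a column meeting both $B(T)$ and $B'(T)$ into a $\beta$/$\gamma$ pair, is exactly the mechanism producing green points in $\Pi(T)$, and must be tracked carefully.

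The main obstacle is the case analysis governing the last column $C_{n-1}^{T_0}$ and the three situations $\mathcal{S}_1,\mathcal{S}_2,\mathcal{S}_3$. The trichotomy reflects whether $p_{n+1}^T$ is free: by Remark~\ref{rem:freepointsofantecedents}, $\mathcal{S}_2$ is exactly the free case, with a single non-free point in $\text{Max}'(T)$, while $\mathcal{S}_1$ and $\mathcal{S}_3$ are the non-free cases. I would match these against the four-way split defining $(B'(T),R'(T))$ in Definition~\ref{defi:Pi}, namely whether $C_{n-1}^T$ meets $R(T)$ or $\mathcal{G}(T)$ and whether $p_{n+1}^T$ is free, and show that in $\mathcal{S}_3$ the two configurations $T^{3/}$ and $T^{3\backslash}$ are precisely the two distinct preimages sharing the same pair $(T_0,X)$, whereas $\mathcal{S}_1$ and $\mathcal{S}_2$ each yield a single preimage. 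Once the labels and the situations are matched, $\Pi(\mathfrak{T}(T_0,X))=\{T_0\}$ and $X_{\mathfrak{T}(T_0,X)}=X$ give both inclusions of \eqref{eq:Pimoins1}; disjointness over distinct $X$ holds because $X_T$ is an invariant of $T$; and surjectivity of $\Pi$ follows since $\mathfrak{T}(T_0,\emptyset)$ is nonempty for every $T_0$.
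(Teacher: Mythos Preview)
Your proposal is correct and follows essentially the same approach as the paper: both argue via the pairing $T\mapsto(\Pi(T),X_T)$, establish the containment from Remark~\ref{rem:newstat}, and then verify the two inclusions of \eqref{eq:Pimoins1} by checking that the labels $\beta,\rho,\gamma$ of Definition~\ref{defi:labels} on $T$ correspond under $\Pi$ to the labels $\mathfrak{b},\mathfrak{r},\mathfrak{g},\mathfrak{b}',\mathfrak{r}',\mathfrak{g}'$ of Definition~\ref{defi:newlabels} on $T_0$ relative to $X_T$, with the situation $\mathcal{S}_3$ accounting for the unique two-element fibers. The paper's write-up is terser (it delegates the label verifications to ``one can check that''), but the logical skeleton is identical to yours.
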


\begin{proof}
Let $T \in \mathcal{T}_n^e$, if $\Pi(T) = T_0$, then $i \in [\max(T)-1,n-2]$ by Remark~\ref{rem:evenmaximalfreepoints}, so $\Pi^{-1}(T_0) \subset \bigsqcup_{k = 0}^{i+1} \mathcal{T}_{n,k}^e$.

Let us now prove that the elements $T \in \mathfrak{T}(T_0,X)$ for all $X \subsetneq \text{Omax}(T_0)$ are indeed mapped to $T_0$ by $\Pi$. It is straightforward that $\Pi_0(T)$ is obtained by erasing all the elements of $\text{Omax}(T_0)$ from $T_0$. Afterwards, for a point $p \in \text{Omax}(T_0)$, let $\bar{p} \in \text{Max}(T) \sqcup \text{Max}'(T)$ be the point of $T$ obtained by inserting $p$ in $\mathfrak{T}_0(T_0)$. One can check that the label of $\bar{p}$ as defined by Definition \ref{defi:labels} is $\beta$ (respectively $\rho,\gamma$) if and only if $p \in B(T_0)$ (respectively $p \in R(T_0),\mathcal{G}(T_0)$), so we obtain $\Pi(T) = T_0$
and $\mathfrak{T}(T_0,X)\subset \Pi^{-1}(T_0)$.

Reciprocally, for all $T \in \Pi^{-1}(T_0)$, we show that $T \in \mathfrak{T}(T_0,X_T)$. It is straightforward that $\mathfrak{T}_0(T_0)$ is obtained by erasing all the elements of $\text{Omax}(T)$ and $\text{Max}'(T)$ from $T$. Afterwards, for a point $p \in \text{Max}(T) \sqcup \text{Max}'(T)$, let $\tilde{p} \in \text{Omax}(T_0)$ be the point of $T_0$ that was obtained by inserting $p$ in in $\Pi_0(T)$. One can check that if $p \in B(T)$ (respectively $p \in R(T),\mathcal{G}(T),B'(T),R'(T),\mathcal{G}'(T)$), then the label of $\tilde{p}$ as defined by Definition \ref{defi:newlabels}  with $X = X_T$ (recall that $X_T$ is defined at the same time as $\Pi(T)$ in Definition \ref{defi:Pi}) is $\mathfrak{b}$ (respectively $\mathfrak{r},\mathfrak{g},\mathfrak{b}',\mathfrak{r}',\mathfrak{g}'$). Consequently, if $\mathfrak{T}(T_0,X_T)$ is defined in the situation $\mathcal{S}_1$ or $\mathcal{S}_2$, then $ \mathfrak{T}(T_0,X_T) = \{T\}$. If it is in the situation $\mathcal{S}_3$, then $\mathfrak{T}(T_0,X_T) = \{T^{3/},T^{3 \backslash}\}$ where $T = T^{3/}$ if $i_{\min}^T = n-1$, otherwise $T = T^{3 \backslash}$.

So $\Pi^{-1}(T_0) = \bigsqcup_{X \subsetneq \Omax(T_0)} \mathfrak{T}(T_0,X)$. formula~\eqref{eq:Pimoins1} then follows from Remark \ref{rem:maximalPimoins1}.
\end{proof}

\begin{lem}
\label{lem:S1}
Let $T_0 \in \mathcal{T}_{n-1}^e,X\subsetneq \Omax(T_0)$ and $T \in \mathfrak{T}(T_0,X)$. The following assertions are equivalent.
\begin{enumerate}
\item The set $\mathfrak{T}(T_0,X)$ is defined in the situation $\mathcal{S}_1$ of Definition \ref{defi:Pimoins1}.
\item The point $p_{n+1}^T$ is not free and no point of $C_{n-1}^T$ is an element of $R(T)$ or $\mathcal{G}(T)$.
\item The set $X$ doesn't contain the last element $p^r$ of $R(T_0)$, and $|X| \leq \max(T_0)$.
\end{enumerate}
\end{lem}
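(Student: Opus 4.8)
The plan is to isolate a single structural fact about the last column of $T_0$ and then read off all three equivalences from it. Recall from Remark~\ref{rem:newstat} that the two points of $C_{n-1}^{T_0}$ are exactly $p^b$, the last element of $B(T_0)$, and $p^r$, the last element of $R(T_0)$, and that these are obtained by inserting the two points of $C_{n-1}^T$ into $\Pi_0(T)$. The key sub-claim I would prove first, by inspecting the third bullet of Definition~\ref{defi:newlabels} (the one that assigns labels to $p^b$ and $p^r$), is that $p^b$ is never labeled $\mathfrak{r}$ or $\mathfrak{g}$ -- it always receives one of $\mathfrak{b},\mathfrak{b}',\mathfrak{r}'$ -- whereas $p^r$ receives a label in $\{\mathfrak{r},\mathfrak{g}\}$ precisely when $p^r\in X$. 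This is a finite case check over the branches of that bullet ($p^r\notin X$; $p^r\in X$ with $j_{\min}=n-1$; $j_{\min}<n-1$ with a $\mathfrak{b}'$ present in $C_{j_{\min}}^{T_0}$; and the remaining branch), combined with Remark~\ref{rem:labelsPi}, which identifies the points labeled $\mathfrak{b},\mathfrak{r},\mathfrak{g}$ with the elements of $X$. Its upshot is that ``no point of $C_{n-1}^{T_0}$ is labeled $\mathfrak{r}$ or $\mathfrak{g}$'' is equivalent to ``$p^r\notin X$''.

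With this in hand, the equivalence (i)$\Leftrightarrow$(iii) is immediate: writing $k=|X|$ and $i=\max(T_0)$, the situation $\mathcal{S}_1$ is by definition ``$k\le i$ and no point of $C_{n-1}^{T_0}$ is labeled $\mathfrak{r}$ or $\mathfrak{g}$'', which the sub-claim rewrites as ``$|X|\le\max(T_0)$ and $p^r\notin X$'', that is, exactly assertion (iii).

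For (i)$\Leftrightarrow$(ii) I would use Remark~\ref{rem:freepointsofantecedents}, which states that $p_{n+1}^T$ is non-free precisely in the situations $\mathcal{S}_1$ and $\mathcal{S}_3$ (and free in $\mathcal{S}_2$); thus the first clause of (ii) restricts us to the situations $\mathcal{S}_1$ and $\mathcal{S}_3$. For the second clause, I would invoke the label--membership dictionary established in the proof of Proposition~\ref{prop:Pimoins1} (valid here because $X=X_T$ by the disjointness of the union in that proposition): the preimage in $T$ of a point $\tilde p\in\Omax(T_0)$ lies in $R(T)$ (resp.\ $\mathcal{G}(T)$) iff $\tilde p$ carries the label $\mathfrak{r}$ (resp.\ $\mathfrak{g}$). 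Applying this to the two points of $C_{n-1}^T$, whose images are $p^b$ and $p^r$, and using the sub-claim once more, I get that ``some point of $C_{n-1}^T$ lies in $R(T)$ or $\mathcal{G}(T)$'' is equivalent to ``$p^r\in X$''. Finally I would observe that $\mathcal{S}_3$ forces $p^r\in X$ (its defining condition requires a point of $C_{n-1}^{T_0}$ labeled $\mathfrak{r}$ or $\mathfrak{g}$, necessarily $p^r$ by the sub-claim, whence $p^r\in X$ by Remark~\ref{rem:labelsPi}), while $\mathcal{S}_1$ gives $p^r\notin X$ by the (i)$\Leftrightarrow$(iii) step. Hence, among the two admissible situations $\mathcal{S}_1$ and $\mathcal{S}_3$, the second clause of (ii) singles out exactly $\mathcal{S}_1$, which proves (ii)$\Leftrightarrow$(i).

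The main obstacle I anticipate is the bookkeeping in the sub-claim: one must track, through every branch of the last bullet of Definition~\ref{defi:newlabels}, that the labels $\mathfrak{r}$ and $\mathfrak{g}$ on $C_{n-1}^{T_0}$ can only land on $p^r$ and do so exactly when $p^r\in X$ -- in particular the branch in which $p^b$ is assigned $\mathfrak{r}'$ must be checked not to place an $\mathfrak{r}$ or $\mathfrak{g}$ on $p^b$. Once the sub-claim is secured, everything else is a direct translation through Remarks~\ref{rem:labelsPi} and~\ref{rem:freepointsofantecedents} and the dictionary from Proposition~\ref{prop:Pimoins1}.
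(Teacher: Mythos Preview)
Your proposal is correct and follows essentially the same route as the paper: both obtain (i)$\Leftrightarrow$(iii) directly from the last-column branch of Definition~\ref{defi:newlabels}, and both use the label dictionary established in the proof of Proposition~\ref{prop:Pimoins1} to handle (i)$\Leftrightarrow$(ii). Your organization is in fact slightly more economical: you package the case check on $p^b,p^r$ as a single sub-claim and quote Remark~\ref{rem:freepointsofantecedents} for the (non-)freeness of $p_{n+1}^T$, whereas the paper re-derives that freeness statement via a short root analysis on $C_{n-1}^T$ inside the proof itself.
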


\begin{proof}
The equivalence (i) $\Leftrightarrow$ (iii) is straightforward by the third point of Definition \ref{defi:newlabels}.
Suppose that $\mathfrak{T}(T_0,X)$ is defined in the situation $\mathcal{S}_1$ of Definition \ref{defi:Pimoins1}, hence $\mathfrak{T}(T_0,X) = \{T\}$. We know that $\max(T) = |X| \leq \max(T_0)$ by hypothesis, so $\max'(T) = \omax(T_0) - |X| \geq~2$. Now, as stated in the proof of Proposition \ref{prop:Pimoins1}, the hypothesis that no point of $C_{n-1}^{T_0}$ has the label $\mathfrak{r}$ or $\mathfrak{g}$ implies that no point of $C_{n-1}^T$ belongs to $R(T)$ or $\mathcal{G}(T)$. It remains to prove that $p_{n+1}^T$ is free. In general, by Proposition \ref{prop:TPathstationnaire}, the roots of the points of $C_{n-1}^T$ belong to $\{n-1,n,n+1,2n\}$. Since no point of $C_{n-1}^T$ belongs to $R(T)$ by hypothesis, their roots cannot be $2n$. If a point $p$ of $C_{n-1}^T$ has the root $n+1$, then obviously $p_{n+1}^T$ is located in one of the $n-1$ first columns of $T$, so it is not free. Otherwise, one point $p \in C_{n-1}^T$ has the root $n-1$ and the other point $q$ of $C_{n-1}^T$ has the root $n$. It implies that $q$ is an element of $B(T)$. Since $p \not\in \mathcal{G}(T)$ by hypothesis, it cannot be free in this situation, so $p = (n-1:n-1)$ and $S_T(n-1) = (p)$. Since $\max'(T) \geq~2$ by hypothesis, the sequence $S_T(n+1)$ has at least one element located in the $n-1$ first columns of $T$, in other words $p_{n+1}^T$ (the first element of $S_T(n+1)$) is not free. So (i) implies (ii).

Let us now prove that (ii) implies (i). If $p_{n+1}^T$ is not free and no point of $C_{n-1}^T$ belongs to $R(T)$ or $\mathcal{G}(T)$, as stated in the proof of Proposition \ref{prop:Pimoins1} no point of $C_{n-1}^T$ has the label $\mathfrak{r}$ or $\mathfrak{g}$. Also, since $\max'(T) \geq~2$ because $\Max(T)$ contains $p_{n-1}^T$ in general and $p_{n+1}^T$ in this case (because $p_{n+1}^T$ is not free), we have $|X| = \max(T) = \omax(T_0) - \max'(T) = \max(T_0)+2-\max'(T) \leq \max(T_0)$. So (ii) implies (i).
\end{proof}

\begin{lem}
\label{lem:S2}
Let $T_0 \in \mathcal{T}_{n-1}^e,X\subsetneq \Omax(T_0)$ and $T \in \mathfrak{T}(T_0,X)$. The following assertions are equivalent.
\begin{enumerate}
\item The set $\mathfrak{T}(T_0,X)$ is defined in the situation $\mathcal{S}_2$ of Definition \ref{defi:Pimoins1}.
\item The point $p_{n+1}^T$ is free.
\item If $|X| < \max(T_0)+1$, then the last element $p^r$ of $R(T_0)$ belongs to $X$, and no two elements of $\Omax(T_0) \backslash X$ are labeled with $\mathfrak{b}'$ and~$\mathfrak{r}'$.
\end{enumerate}
\end{lem}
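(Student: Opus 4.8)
The plan is to prove the three equivalences by leaning on the trichotomy $\mathcal{S}_1,\mathcal{S}_2,\mathcal{S}_3$ of Definition~\ref{defi:Pimoins1} --- which, since $\mathcal{S}_3$ is declared to be the remaining situation, partitions all admissible pairs $(T_0,X)$ --- together with Remark~\ref{rem:freepointsofantecedents} and a single bookkeeping observation about the labels carried by the last column $C_{n-1}^{T_0}$.

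First I would dispatch (i)$\,\Leftrightarrow\,$(ii), which is essentially already contained in Remark~\ref{rem:freepointsofantecedents}: that remark asserts that $p_{n+1}^T$ is free exactly when $\mathfrak{T}(T_0,X)$ is defined in the situation $\mathcal{S}_2$, and that it is not free in the situations $\mathcal{S}_1$ and $\mathcal{S}_3$. Because exactly one of the three situations occurs for a given pair $(T_0,X)$, the freeness of $p_{n+1}^T$ singles out $\mathcal{S}_2$ among the three: this yields (i)$\,\Rightarrow\,$(ii) immediately, and (ii)$\,\Rightarrow\,$(i) by elimination of $\mathcal{S}_1$ and $\mathcal{S}_3$.

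The core of the argument is (i)$\,\Leftrightarrow\,$(iii). Here I would first record the observation that a point of $C_{n-1}^{T_0}$ carries the label $\mathfrak{r}$ or $\mathfrak{g}$ if and only if the last element $p^r$ of $R(T_0)$ lies in $X$. This is read off the third bullet of Definition~\ref{defi:newlabels}: when $p^r\notin X$ the two points $p^b,p^r$ of $C_{n-1}^{T_0}$ receive labels only among $\mathfrak{b}',\mathfrak{b},\mathfrak{r}'$, whereas when $p^r\in X$ every sub-case of the ``Otherwise'' branch assigns to $p^r$ the label $\mathfrak{r}$ or $\mathfrak{g}$. With this translation, the defining condition of $\mathcal{S}_2$ becomes: either $|X|=\max(T_0)+1$, or else $|X|\le \max(T_0)$ together with $p^r\in X$ and the absence in $\Omax(T_0)\setminus X$ of a pair of elements labeled $\mathfrak{b}'$ and $\mathfrak{r}'$. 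Phrasing this as a statement conditioned on $|X|<\max(T_0)+1$ is exactly assertion (iii): when $|X|=\max(T_0)+1$ both $\mathcal{S}_2$ and (iii) hold (the latter vacuously), and when $|X|\le \max(T_0)$ both reduce to the conjunction that $p^r\in X$ and that $\Omax(T_0)\setminus X$ contains no element labeled $\mathfrak{b}'$ alongside one labeled $\mathfrak{r}'$.

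I expect no serious obstacle, as the whole statement amounts to unwinding Definition~\ref{defi:Pimoins1} and matching it against Definition~\ref{defi:newlabels} and Remark~\ref{rem:freepointsofantecedents}. The one delicate point is the label bookkeeping for the last column, namely verifying that $p^r\in X$ is the exact trigger for an $\mathfrak{r}$- or $\mathfrak{g}$-label there; since Definition~\ref{defi:newlabels} treats $C_{n-1}^{T_0}$ through a nested case split (on whether $p^r\in X$, then on the value of $j_{\min}$, then on the label found in $C_{j_{\min}}^{T_0}$), I would check each branch explicitly to confirm that no branch with $p^r\notin X$ produces an $\mathfrak{r}$ or $\mathfrak{g}$ label.
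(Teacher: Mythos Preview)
Your proposal is correct. For (i)$\,\Leftrightarrow\,$(iii) you proceed exactly as the paper does, reading the equivalence off the third bullet of Definition~\ref{defi:newlabels} via the observation that $C_{n-1}^{T_0}$ carries an $\mathfrak r$- or $\mathfrak g$-label precisely when $p^r\in X$. For (i)$\,\Leftrightarrow\,$(ii) you take a shortcut the paper does not: you invoke Remark~\ref{rem:freepointsofantecedents}, which already states and justifies that $p_{n+1}^T$ is free exactly in situation $\mathcal S_2$. The paper instead gives a self-contained argument at this point, analyzing the possible roots of the two points of $C_{n-1}^T$ inside $\{n-1,n,n+1,2n\}$ and tracing what each configuration forces about $\Max'(T)$ and the resulting labels in $C_{n-1}^{T_0}$. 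Your route is shorter and perfectly legitimate since Remark~\ref{rem:freepointsofantecedents} precedes the lemma; the paper's route has the virtue of making the mechanism (roots determining labels determining situation) explicit once more, which is the same mechanism driving Lemmas~\ref{lem:S1} and~\ref{lem:S3}.
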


\begin{proof}
The equivalence (i) $\Leftrightarrow$ (iii) is straightforward by the third point of Definition \ref{defi:newlabels}.
Suppose that $\mathfrak{T}(T_0,X)$ is defined in the situation $\mathcal{S}_2$ of Definition \ref{defi:Pimoins1}, hence $\mathfrak{T}(T_0,X) = \{T\}$. If $|X| = \max(T_0)+1$, then $\max'(T) = \omax(T_0) - |X| = 1$, so $\Max'(T) = \{p_{n-1}^T\}$ and $p_{n+1}^T$ is free (because it doesn't belong to $\Max'(T)$). If $|X| < \max(T_0)+1$, one of the points of $C_{n-1}^{T_0}$ is labeled with $\mathfrak{r}$ or $\fgg$, so one of the points of $C_{n-1}^T$ is labeled with $\rho$ or $\gamma$. Assume then that $p_{n+1}^T$ is not free, then one of the two points $p_{n-1}^T$ and $p_{n+1}^T$ is an element of $B'(T)$ and the other is an element of $R'(T)$, which by Definition \ref{defi:labels} have labels $\beta$ and $\rho$, and their insertions in $\Pi_0(T)$ plot two points labeled with $\fb'$ and $\fr'$, which contradicts (i). So (i) implies~(ii).

Reciprocally, suppose that $p_{n+1}^T$ is free. Recall that $X = X_T$ has the cardinality $\max(T)$ by Remark \ref{rem:maximalPimoins1}, so if $\max(T) = \max(T_0)+1$ then we indeed have (i). Suppose now that $\max(T) < \max(T_0)+1$. In general, by Proposition \ref{prop:TPathstationnaire}, the roots of the points of $C_{n-1}^T$ belong to $\{n-1,n,n+1,2n\}$. Since $p_{n+1}^T$ is free, these roots cannot be $n+1$. Since $\max'(T) = \omax(T_0)-\max(T) \geq~2$, the point $p_{n-1}^T$ cannot be located in the box $(n-1:n-1)$ (otherwise $\Max'(T)$ would be $\{p_{n-1}^T\}$ because $p_{n+1}^T$ is free, whereas its cardinality exceeds 2). Consequently, if the roots of the points of $C_{n-1}^T$ are $n-1$ and $n$, then the point whose root is $n-1$ is free, which implies that it is an element of $\mathcal{G}(T)$ (because the point whose root is $n$ belongs to $B(T)$), hence it is labeled with $\gamma$ and its insertion in $\Pi_0(T)$ plots a point labeled with $\fgg$ in $C_{n-1}^{T_0}$. Otherwise, the roots of the points of $C_{n-1}^T$ are $n$ and $2n$, in particular the point whose root is $2n$ belongs to $R(T)$ so it is labeled with $\rho$ and its insertion in $\Pi_0(T)$ plots a point labeled with $\fr$ in $C_{n-1}^{T_0}$. In both cases, we have (i).
\end{proof}

The following lemma is a consequence of Lemma \ref{lem:S1} and Lemma \ref{lem:S2}.

\begin{lem}
\label{lem:S3}
Let $T_0 \in \mathcal{T}_{n-1}^e,X\subsetneq \Omax(T_0)$ and $T \in \mathfrak{T}(T_0,X)$. The following assertions are equivalent.
\begin{enumerate}
\item The set $\mathfrak{T}(T_0,X)$ is defined in the situation $\mathcal{S}_3$ of Definition \ref{defi:Pimoins1}.
\item The point $p_{n+1}^T$ is not free and one point of $C_{n-1}^T$ is an element of $R(T)$ or $\mathcal{G}(T)$.
\item The set $X$ contains the last element $p^r$ of $R(T_0)$,$|X| \leq \max(T_0)$, and no two elements of $\Omax(T_0) \backslash X$ are labeled with $\mathfrak{b}'$ and $\mathfrak{r}'$.
\end{enumerate}
\end{lem}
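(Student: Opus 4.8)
The plan is to exploit the fact that, by Definition~\ref{defi:Pimoins1}, the three situations $\mathcal{S}_1,\mathcal{S}_2,\mathcal{S}_3$ are mutually exclusive and jointly exhaustive: every pair $(T_0,X)$ with $X\subsetneq\Omax(T_0)$ falls into exactly one of them, and $\mathcal{S}_3$ is defined precisely as ``the remaining situation''. Consequently, for $T\in\mathfrak{T}(T_0,X)$, assertion (i) of Lemma~\ref{lem:S3} (that $\mathfrak{T}(T_0,X)$ is defined in $\mathcal{S}_3$) holds if and only if $\mathfrak{T}(T_0,X)$ is defined neither in $\mathcal{S}_1$ nor in $\mathcal{S}_2$. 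This reduces the whole statement to negating and intersecting the characterizations already obtained in Lemma~\ref{lem:S1} and Lemma~\ref{lem:S2}; no new analysis of the paths is required.

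For the equivalence (i)~$\Leftrightarrow$~(ii), I would proceed as follows. By Lemma~\ref{lem:S2}, being in $\mathcal{S}_2$ is equivalent to ``$p_{n+1}^T$ is free'', so $\neg\mathcal{S}_2$ is exactly ``$p_{n+1}^T$ is not free''. By Lemma~\ref{lem:S1}, being in $\mathcal{S}_1$ is equivalent to ``$p_{n+1}^T$ is not free and no point of $C_{n-1}^T$ is an element of $R(T)$ or $\mathcal{G}(T)$''. Forming the conjunction $\neg\mathcal{S}_1\wedge\neg\mathcal{S}_2$ and using that ``$p_{n+1}^T$ is not free'' (forced by $\neg\mathcal{S}_2$) contradicts the disjunct ``$p_{n+1}^T$ is free'' coming from $\neg\mathcal{S}_1$, one is left with ``$p_{n+1}^T$ is not free and some point of $C_{n-1}^T$ is an element of $R(T)$ or $\mathcal{G}(T)$'', which is precisely assertion (ii).

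For (i)~$\Leftrightarrow$~(iii), I would negate the third assertions and intersect them in the same manner. Lemma~\ref{lem:S1}(iii) reads ``$p^r\notin X$ and $|X|\le\max(T_0)$'', whose negation is ``$p^r\in X$ or $|X|=\max(T_0)+1$''. Lemma~\ref{lem:S2}(iii) is the implication ``$|X|\le\max(T_0)\Rightarrow\bigl(p^r\in X$ and no two elements of $\Omax(T_0)\backslash X$ are labeled $\mathfrak{b}',\mathfrak{r}'\bigr)$'', whose negation is ``$|X|\le\max(T_0)$ and $\bigl(p^r\notin X$ or there exist two elements of $\Omax(T_0)\backslash X$ labeled $\mathfrak{b}',\mathfrak{r}'\bigr)$''. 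Intersecting, the clause $|X|\le\max(T_0)$ supplied by $\neg\mathcal{S}_2$ eliminates the disjunct $|X|=\max(T_0)+1$ of $\neg\mathcal{S}_1$ and forces $p^r\in X$, which in turn eliminates the disjunct $p^r\notin X$ of $\neg\mathcal{S}_2$. What survives is ``$p^r\in X$, $|X|\le\max(T_0)$, and there exist two elements of $\Omax(T_0)\backslash X$ labeled $\mathfrak{b}',\mathfrak{r}'$'', which is exactly the list of defining clauses of $\mathcal{S}_3$ in Definition~\ref{defi:Pimoins1}, i.e.\ assertion (iii).

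The only step requiring genuine care — and thus the main obstacle — is the bookkeeping of the boundary case $|X|=\max(T_0)+1$, where $\mathcal{S}_2$ is reached through its first alternative and the conditional in Lemma~\ref{lem:S2}(iii) holds vacuously. One must invoke the recall $|X|\in[0,\max(T_0)+1]$ from Definition~\ref{defi:Pimoins1} to treat ``$|X|>\max(T_0)$'' and ``$|X|=\max(T_0)+1$'' as interchangeable, and then check that the conjunction of the two negations collapses the conditional form cleanly rather than leaving a spurious case. Once this is confirmed, both equivalences follow formally and the lemma is proved as a direct consequence of Lemmas~\ref{lem:S1} and~\ref{lem:S2}.
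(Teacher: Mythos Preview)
Your proposal is correct and is exactly the approach the paper takes: the paper's entire ``proof'' is the one-line remark preceding the lemma, ``The following lemma is a consequence of Lemma~\ref{lem:S1} and Lemma~\ref{lem:S2}'', and you have carefully spelled out the complementation argument that this sentence encodes. One small point worth flagging: your derivation for (i)~$\Leftrightarrow$~(iii) yields ``there \emph{exist} two elements of $\Omax(T_0)\backslash X$ labeled $\mathfrak{b}',\mathfrak{r}'$'', matching the defining clause of $\mathcal{S}_3$ in Definition~\ref{defi:Pimoins1}, whereas the printed assertion~(iii) says ``no two elements'' --- this is evidently a typo in the paper (a copy from Lemma~\ref{lem:S2}(iii)), and your version is the intended one.
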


\begin{prop}
\label{prop:cardinalPimoins1}
For all $i \in [0,n-2]$ and $T_0 \in \mathcal{T}_{n-1,i}$, the set
$\Pi^{-1}(T_0) \subset \bigsqcup_{k = 0}^{i+1} \mathcal{T}_{n,k}^e$  is partitioned as follows~:
\begin{itemize}
\item $\Pi^{-1}(T_0) \cap \mathcal{T}_{n,0}^e$ has one unique element, which has $\text{fr}(T_0)$ free points;
\item $\Pi^{-1}(T_0) \cap \mathcal{T}_{n,i+1}^e$ has $i+2$ elements, each of which has $\text{fr}(T_0)+1$ free points;
\item for all $k \in [i]$, let $N_k(T_0)$ be the number of elements $T \in \Pi^{-1}(T_0) \cap \mathcal{T}_{n,k}^e$ such that $p_{n+1}^T$ is free. All these tableaux have $\text{fr}(T_0) +1$ free points. Afterwards, in addition to these tableaux, the set $\Pi^{-1}(T_0) \cap \mathcal{T}_{n,k}^e$ is also composed of~:
\begin{itemize}
\item $\binom{i+1}{k}$ tableaux whose $(n-1)$-th column contains no element of $R(T)$ or $\mathcal{G}(T)$, each of which has $\text{fr}(T_0)$ free points;
\item $2 \left( \binom{i+1}{k-1} - N_k(T_0) \right)$ other tableaux, each of which has $\text{fr}(T_0)$ free points.
\end{itemize}
\end{itemize}
\end{prop}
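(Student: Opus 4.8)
The plan is to combine the fiber decomposition of Proposition~\ref{prop:Pimoins1} with the trichotomy established in Lemmas~\ref{lem:S1}, \ref{lem:S2} and~\ref{lem:S3}. By Proposition~\ref{prop:Pimoins1} and Remark~\ref{rem:maximalPimoins1}, for every $k \in [0,i+1]$ one has the disjoint union
$$\Pi^{-1}(T_0) \cap \mathcal{T}_{n,k}^e = \bigsqcup_{\substack{X \subset \Omax(T_0)\\ |X| = k}} \mathfrak{T}(T_0,X),$$
where $\Omax(T_0)$ has $\omax(T_0) = i+2$ elements. Since $\mathfrak{T}(T_0,X)$ is a singleton in the situations $\mathcal{S}_1$ and $\mathcal{S}_2$ and has two elements in $\mathcal{S}_3$, and since the number of free points of its elements is controlled by Remark~\ref{rem:freepointsofantecedents} ($\text{fr}(T_0)$ in $\mathcal{S}_1,\mathcal{S}_3$ and $\text{fr}(T_0)+1$ in $\mathcal{S}_2$), the whole statement reduces to sorting the size-$k$ subsets $X$ by their situation and counting them.

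First I would organize the subsets of size $k$ according to whether they contain the last element $p^r$ of $R(T_0)$: exactly $\binom{i+1}{k}$ of them avoid $p^r$ and exactly $\binom{i+1}{k-1}$ contain it, the two numbers summing to $\binom{i+2}{k}$. By criterion~(iii) of Lemma~\ref{lem:S1}, a subset lies in $\mathcal{S}_1$ precisely when $p^r \notin X$ and $|X| \le i$; by Lemmas~\ref{lem:S2} and~\ref{lem:S3}, a subset with $p^r \in X$ and $|X| \le i$ lies in $\mathcal{S}_2$ or $\mathcal{S}_3$ according as $\Omax(T_0) \backslash X$ does not, or does, carry two elements labeled $\mathfrak{b}'$ and $\mathfrak{r}'$; and every subset of size $i+1$ lies in $\mathcal{S}_2$ since then criterion~(iii) of Lemma~\ref{lem:S2} is vacuous. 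This settles the two extreme cases immediately: for $k = 0$ the unique subset $\emptyset$ is in $\mathcal{S}_1$ and yields one tableau with $\text{fr}(T_0)$ free points, while for $k = i+1$ all $i+2$ subsets are in $\mathcal{S}_2$ and yield $i+2$ tableaux with $\text{fr}(T_0)+1$ free points.

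For $k \in [i]$ I would then read off the three announced families. The $\binom{i+1}{k}$ subsets avoiding $p^r$ are exactly the $\mathcal{S}_1$ subsets, and by the equivalence (i)$\Leftrightarrow$(ii) of Lemma~\ref{lem:S1} the corresponding tableaux are precisely those whose $(n-1)$-th column meets no element of $R(T)$ or $\mathcal{G}(T)$, each with $\text{fr}(T_0)$ free points. The $\binom{i+1}{k-1}$ subsets containing $p^r$ split into $\mathcal{S}_2$ and $\mathcal{S}_3$. The decisive observation is that, by the equivalence (i)$\Leftrightarrow$(ii) of Lemma~\ref{lem:S2}, a fiber element $T$ has $p_{n+1}^T$ free exactly when its subset lies in $\mathcal{S}_2$, and such a subset contributes a single tableau; hence the number of $\mathcal{S}_2$ subsets of size $k$ equals $N_k(T_0)$, and these $N_k(T_0)$ tableaux all have $\text{fr}(T_0)+1$ free points. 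The remaining $\binom{i+1}{k-1} - N_k(T_0)$ subsets lie in $\mathcal{S}_3$ and contribute two tableaux each, with $p_{n+1}^T$ not free and $\text{fr}(T_0)$ free points (Lemma~\ref{lem:S3}(ii), Remark~\ref{rem:freepointsofantecedents}), for a total of $2\left(\binom{i+1}{k-1} - N_k(T_0)\right)$. Assembling these three counts reproduces exactly the claimed partition.

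The computation is essentially bookkeeping once the lemmas are granted; the only point requiring care is the identification of $N_k(T_0)$ with the number of $\mathcal{S}_2$ subsets of size $k$. This rests on two facts that must be invoked explicitly: situations $\mathcal{S}_1$ and $\mathcal{S}_3$ produce only tableaux with $p_{n+1}^T$ not free (so that freeness of $p_{n+1}^T$ detects $\mathcal{S}_2$ unambiguously), and each $\mathcal{S}_2$ subset yields exactly one tableau (so that counting subsets and counting tableaux coincide). Both follow directly from the equivalences in Lemmas~\ref{lem:S1}--\ref{lem:S3} together with the singleton/doubleton structure of $\mathfrak{T}(T_0,X)$ recorded in Definition~\ref{defi:Pimoins1}, so no argument beyond careful counting is needed.
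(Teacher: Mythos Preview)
Your proof is correct and follows essentially the same approach as the paper's: both combine the fiber decomposition of Proposition~\ref{prop:Pimoins1} with the trichotomy of Lemmas~\ref{lem:S1}--\ref{lem:S3}, using criterion~(iii) of each lemma to sort the size-$k$ subsets $X \subsetneq \Omax(T_0)$ by whether or not they contain $p^r$, and Remark~\ref{rem:freepointsofantecedents} to read off the free-point counts. The only cosmetic difference is that you organize the argument around the dichotomy $p^r \in X$ versus $p^r \notin X$ up front, whereas the paper treats each of the three bullet classes for $k \in [i]$ in turn; the identification of $N_k(T_0)$ with the number of $\mathcal{S}_2$ subsets of size $k$ is handled identically in both.
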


\begin{proof}
By formula~\eqref{eq:Pimoins1}, the set $\Pi^{-1}(T_0) \cap \mathcal{T}_{n,0}^e$ is $\mathfrak{T}(T_0,\emptyset)$. By Lemma \ref{lem:S1}, this set is defined in the situation $\mathcal{S}_1$ of Definition \ref{defi:Pimoins1}, hence $\mathfrak{T}(T_0,\emptyset)$ has one unique element $T^1$ such that $\text{fr}(T^1) = \text{fr}(T_0)$ in view of Remark \ref{rem:freepointsofantecedents}.

Still by formula~\eqref{eq:Pimoins1}, the set $\Pi^{-1}(T_0) \cap \mathcal{T}_{n,i+1}^e$ is $$\bigsqcup_{X \subset \text{Omax}(T_0),|X|=i+1} \mathfrak{T}(T_0,X),$$
and for every of the $\binom{i+2}{i+1} = i+2$ subsets $X \subset \Omax(T_0)$ whose cardinality is $i+1$, the set $\mathfrak{T}(T_0,X)$ is defined in the situation $\mathcal{S}_2$ of Definition \ref{defi:Pimoins1} in view of Lemma \ref{lem:S2}, so it contains one unique element $T^2$ such that $\text{fr}(T^2) = \text{fr}(T_0)+1$ in view of Remark \ref{rem:freepointsofantecedents}.

Let $k \in [i]$. Consider the tableaux $T \in \Pi^{-1}(T_0) \cap \mathcal{T}_{n,k}^e$ such that $p_{n+1}^T$ is not free and no point of $C_{n-1}^T$ is an element of $R(T)$ or $\mathcal{G}(T)$. By formula~\eqref{eq:Pimoins1} and Lemma \ref{lem:S1}, the set of these tableaux is $\bigsqcup_X \mathfrak{T}(T_0,X)$ where the union is over the subsets $X \subset \Omax(T_0) \backslash \{p^r\}$ (where $p^r$ is the last element of $R(T_0)$) such that $|X| = k$. Since $\omax(T_0) = i+2$, there are $\binom{i+1}{k}$ such subsets, and for every of these subsets $X$, the set $\mathfrak{T}(T_0,X)$ is defined in the situation $\mathcal{S}_1$ of Definition \ref{defi:Pimoins1}, hence $\mathfrak{T}(T_0,\emptyset)$ has one unique element $T^1$ such that $\text{fr}(T^1) = \text{fr}(T_0)$ in view of Remark \ref{rem:freepointsofantecedents}.

Consider now the $N_k(T_0)$ tableaux $T \in \Pi^{-1}(T_0) \cap \mathcal{T}_{n,k}^e$ such that $p_{n+1}^T$ is free. By formula~\eqref{eq:Pimoins1} and Lemma \ref{lem:S2}, the set of these tableaux is $\bigsqcup_X \mathfrak{T}(T_0,X)$ where the union is over the subsets $X \subset \Omax(T_0)$ such that $p^r \in X,|X| = k$ and no two elements of $\Omax(T_0) \backslash X$ are labeled with $\fb'$ and $\fr'$. For every of these subsets $X$, the set $\mathfrak{T}(T_0,X)$ is defined in the situation $\mathcal{S}_2$ of Definition \ref{defi:Pimoins1}, hence $\mathfrak{T}(T_0,X)$ has one unique element $T^2$ such that $\text{fr}(T^2) = \text{fr}(T_0)+1$ in view of Remark \ref{rem:freepointsofantecedents}. Consequently, the number of subsets $X \subset \Omax(T_0)$ that contain $p^r$, whose cardinality is $k$, and such that no two elements of $\Omax(T_0) \backslash X$ are labeled with $\fb'$ and $\fr'$, is exactly $N_k(T_0)$.

Finally, consider the tableaux $T \in \Pi^{-1}(T_0) \cap \mathcal{T}_{n,k}^e$ such that $p_{n+1}^T$ is not free and one of the points of $C_{n-1}^T$ is an element of $R(T)$ or $\mathcal{G}(T)$. In view of the above paragraphs, they are exactly the tableaux $T \in \Pi^{-1}(T_0) \cap \mathcal{T}_{n,k}^e$ defined in the situation $\mathcal{S}_3$ of Definition \ref{defi:Pimoins1}. By Lemma \ref{lem:S3}, the set of these tableaux is $\bigsqcup_X \mathfrak{T}(T_0,X)$ where the union is over the subsets $X \subset \Omax(T_0)$ such that $p^r \in X,|X| = k$, and there exist two elements of $\Omax(T_0) \backslash X$ labeled with $\fb'$ and $\fr'$. Since there exist $\binom{i+1}{k-1}$ subsets $X \subset \Omax(T_0)$ that contain $p^r$ and whose cardinality is $k$, in view of the previous paragraph, the number of these subsets $X$ such that there exist two elements of $\Omax(T_0) \backslash X$ labeled with $\fb'$ and $\fr'$ is exactly $\binom{i+1}{k-1}-N_k(T_0)$. For every of these subsets $X$, the set $\mathfrak{T}(T_0,X)$ is defined in the situation $\mathcal{S}_3$ of Definition \ref{defi:Pimoins1}, hence $\mathfrak{T}(T_0,X)$ has two element $T^{3/}$ and $T^{3 \backslash}$ such that $\text{fr}(T^{3 /}) = \text{fr}(T^{3 \backslash})= \text{fr}(T_0)$ in view of Remark \ref{rem:freepointsofantecedents}, so the total number of tableaux $T \in \Pi^{-1}(T_0) \cap \mathcal{T}_{n,k}^e$ defined in the situation $\mathcal{S}_3$ is exactly $2 \left( \binom{i+1}{k-1} - N_k(T_0) \right)$.
\end{proof}

\begin{cor}
The integers $(e_{n,k})_{0 \leq k \leq n-1}$ satisfy the induction formulas \eqref{eq:an0},\eqref{eq:ank},\eqref{eq:anmoins1}. This proves Theorem \ref{theo:PnintermsofTne} in view of $e_{1,0} = 1 = c_{1,0}$.
\end{cor}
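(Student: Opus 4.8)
The plan is to evaluate $e_{n,k}$ fiber by fiber along the surjection $\Pi:\mathcal{T}_n^e\twoheadrightarrow\mathcal{T}_{n-1}^e$ of Proposition~\ref{prop:Pimoins1}, and to compare the outcome with the recurrences \eqref{eq:an0}, \eqref{eq:ank}, \eqref{eq:anmoins1}. Since $\Pi$ is surjective and $\mathcal{T}_{n-1}^e=\bigsqcup_{i=0}^{n-2}\mathcal{T}_{n-1,i}^e$, I would start from
\[
e_{n,k}=\sum_{T\in\mathcal{T}_{n,k}^e}2^{\text{fr}(T)-1-k}
=\sum_{i=0}^{n-2}\ \sum_{T_0\in\mathcal{T}_{n-1,i}^e}\ \sum_{T\in\Pi^{-1}(T_0)\cap\mathcal{T}_{n,k}^e}2^{\text{fr}(T)-1-k},
\]
and evaluate the inner sum using Proposition~\ref{prop:cardinalPimoins1}, which records for each $T_0\in\mathcal{T}_{n-1,i}^e$ both the number of preimages of each type \emph{and} their number of free points---exactly the data needed to read off the weight $2^{\text{fr}(T)-1-k}$. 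Throughout I would rewrite the resulting powers of $2$ relative to $2^{\text{fr}(T_0)-1-i}$, so that summing over $T_0\in\mathcal{T}_{n-1,i}^e$ turns them into multiples of $e_{n-1,i}$.

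For the boundary value $k=0$ only the first bullet of Proposition~\ref{prop:cardinalPimoins1} applies: each $T_0\in\mathcal{T}_{n-1,i}^e$ has a single preimage in $\mathcal{T}_{n,0}^e$, with $\text{fr}(T)=\text{fr}(T_0)$, contributing $2^{\text{fr}(T_0)-1}=2^i\,2^{\text{fr}(T_0)-1-i}$; summing gives $e_{n,0}=\sum_{i=0}^{n-2}2^i e_{n-1,i}$, which is \eqref{eq:an0}. For $k\ge1$ a tableau $T_0\in\mathcal{T}_{n-1,i}^e$ contributes only when $i\ge k-1$. The top stratum $i=k-1$ is governed by the second bullet: it has $i+2=k+1$ preimages, each with $\text{fr}(T)=\text{fr}(T_0)+1$, hence each of weight $2^{\text{fr}(T_0)-1-(k-1)}$, and summing over $\mathcal{T}_{n-1,k-1}^e$ produces the term $(k+1)e_{n-1,k-1}$.

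The crux is the middle range $k\le i$, handled by the third bullet. There the fiber splits into $N_k(T_0)$ tableaux with $p_{n+1}^T$ free (so $\text{fr}(T)=\text{fr}(T_0)+1$), together with $\binom{i+1}{k}$ and $2\big(\binom{i+1}{k-1}-N_k(T_0)\big)$ tableaux, all with $\text{fr}(T)=\text{fr}(T_0)$. Setting $m=2^{\text{fr}(T_0)-1-k}$, the contribution of $T_0$ is
\[
N_k(T_0)\cdot 2m+\binom{i+1}{k}m+2\Big(\binom{i+1}{k-1}-N_k(T_0)\Big)m
=\Big(\binom{i+1}{k}+2\binom{i+1}{k-1}\Big)m,
\]
the two copies of $N_k(T_0)$ cancelling. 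The main obstacle I anticipate is exactly this dependence on the opaque quantity $N_k(T_0)$, which counts subsets $X$ with a prescribed labelling pattern and cannot be put in closed form; the resolution is that the extra free point of the $\mathcal{S}_2$ preimages doubles their weight, and this surplus is precisely annihilated against half of the $\mathcal{S}_3$ preimages. Rewriting $m=2^{i-k}2^{\text{fr}(T_0)-1-i}$, summing over $\mathcal{T}_{n-1,i}^e$, and then over $i\in[k,n-2]$, gives $\sum_{i=k}^{n-2}2^{i-k}\big(\binom{i+1}{k}+2\binom{i+1}{k-1}\big)e_{n-1,i}$; adding the top-stratum term yields \eqref{eq:ank} for $1\le k\le n-2$, while the top-stratum computation alone gives $e_{n,n-1}=n\,e_{n-1,n-2}$, i.e. \eqref{eq:anmoins1}. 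As $(e_{n,k})$ and $(c_{n,k})$ share the base case $e_{1,0}=1=c_{1,0}$ and the same recurrences, induction forces $e_{n,k}=c_{n,k}$, that is $E_n(x)=P_n(x)$, proving Theorem~\ref{theo:PnintermsofTne}.
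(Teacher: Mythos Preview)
Your proposal is correct and follows essentially the same approach as the paper: both argue fiber by fiber along $\Pi$, invoke Proposition~\ref{prop:cardinalPimoins1} to read off the number and free-point count of preimages in each situation, and observe the same cancellation of $N_k(T_0)$ (the doubled weight of the $\mathcal{S}_2$ preimages against half of the $\mathcal{S}_3$ count) to recover \eqref{eq:an0}, \eqref{eq:ank}, \eqref{eq:anmoins1}. The organization and computations match the paper's proof almost line for line.
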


\begin{proof}
Let $n \geq~2$. By Proposition~\ref{prop:Pimoins1}, we have $\mathcal{T}_{n,k}^e \subset \bigsqcup_{i = k-1}^{n-2} \Pi^{-1}(\mathcal{T}_{n-1,i}^e)$ for all $k \in [n-1]$, and~:
\begin{itemize}
\item For all $i \in [0,n-2]$, every tableau $T_0 \in \mathcal{T}_{n-1}^e$ gives birth to $1$ tableau $T \in \mathcal{T}_{n,0}^e$, such that $\text{fr}(T) = \text{fr}(T_0)$, hence $$e_{n,0} = \sum_{T_0 \in \mathcal{T}_{n-1}^e} 2^{\text{fr}(T_0)-1} = \sum_{i = 0}^{n-2} 2^i e_{n-1,i},$$
which is formula \eqref{eq:an0}.
\item For all $k \in [n-2]$,
\begin{enumerate}
\item every tableau $T_0 \in \mathcal{T}_{n-1,k-1}^e$ gives birth to $k+1$ tableaux $T \in \mathcal{T}_{n,k}^e$, such that $\text{fr}(T) = \text{fr}(T_0)+1$, hence
\begin{align*}
\sum_{T \in \mathcal{T}_{n,k}^e,\Pi(T) \in \mathcal{T}_{n-1,k-1}^e} 2^{\text{fr}(T)-1-k} &= (k+1) \sum_{T_0 \in \mathcal{T}_{n-1,k-1}^e} 2^{\text{fr}(T_0)-1-(k-1)}\\
&= (k+1) e_{n-1,k-1};
\end{align*}
\item for all $i \in [k,n-2]$, every tableau $T_0 \in \mathcal{T}_{n-1,i}^e$ gives birth to $\binom{i+1}{k} + 2 \left( \binom{i+1}{k-1} - N_k(T_0) \right)$ tableaux $T \in \mathcal{T}_{n,k}^e$ such that $\text{fr}(T) = \text{fr}(T_0)$, and $N_k(T_0)$ tableaux $T \in \mathcal{T}_{n,k}^e$ such that $\text{fr}(T) = \text{fr}(T_0)+1$, hence the sum $\sum_{T \in \mathcal{T}_{n,k}^e,\Pi(T) \in \mathcal{T}_{n-1,i}^e} 2^{\text{fr}(T)-1-k}$ equals
\begin{align*}
&  \sum_{T_0 \in \mathcal{T}_{n-1,i}^e} \left( \binom{i+1}{k} + 2 \left( \binom{i+1}{k-1} - N_k(T_0) \right) \right) 2^{\text{fr}(T_0)-1-k} + N_k(T_0) 2^{\text{fr}(T_0)-k}\\
=& 2^{i-k} \left( \binom{i+1}{k} + 2 \binom{i+1}{k-1} \right) \sum_{T_0 \in \mathcal{T}_{n-1,i}^e} 2^{\text{fr}(T_0)-1-i}\\
=& 2^{i-k} \left( \binom{i+1}{k} + 2 \binom{i+1}{k-1} \right) e_{n-1,i}.
\end{align*}
\end{enumerate}
The equalities of (i) and (ii) give
$$ \sum_{T \in \mathcal{T}_{n,k}^e} 2^{\text{fr}(T)-1-k} = (k+1) e_{n-1,k-1} + 2^{i-k} \left( \binom{i+1}{k} + 2 \binom{i+1}{k-1} \right) e_{n-1,i}, $$
which is formula \eqref{eq:ank}.
\item Finally note that $\Pi(\mathcal{T}_{n,n-1}^e) \subset \mathcal{T}_{n-1,n-2}^e$, and every tableau $T_0 \in \mathcal{T}_{n-1,n-2}^e$ gives birth to $n$ tableaux $T \in \mathcal{T}_{n,n-1}^e$, such that $\text{fr}(T) = \text{fr}(T_0)+1$, hence
\begin{align*}
e_{n,n-1} = \sum_{T \in \mathcal{T}_{n,n-1}^e} 2^{\text{fr}(T)-1-(n-1)} &= n \sum_{T_0 \in \mathcal{T}_{n-1,k-1}^e} 2^{\text{fr}(T_0)-1-(n-2)}\\
&= n e_{n-1,n-2},
\end{align*}
which is formula \eqref{eq:anmoins1}.
\end{itemize}
This completes the proof.
\end{proof}

\section{Proof of Theorem \ref{theo:PnintermsofTno}}
\label{sec:PnintermsofTno}
In this section, we construct a surjection $\mathcal{P} : \mathcal{T}_n^e \twoheadrightarrow \mathcal{T}_{n-1}^o (n \geq~2)$ such that
\begin{equation}
\label{eq:sumoverPmoins1deT0}
\sum_{T \in \mathcal{P}^{-1}(T_0)} 2^{\text{fr}(T)-1-\max(T)} x^{\max(T)} = 2^{\text{fr}(T_0)-\text{g}(T_0)} x^{\text{v}(T_0)}(1+x)^{\text{g}(T_0)}
\end{equation}
for all $T_0 \in \mathcal{T}_{n-1}^o$, which proves Theorem \ref{theo:PnintermsofTno} in view of Theorem \ref{theo:PnintermsofTne}.

\begin{dfn}
Let $T \in \mathcal{T}_n^e$. We label the elements of $B(T)$ and $R(T)$ with the letter $\nu$, and the elements of $\mathcal{G}(T)$ with the letter $\gamma$.
\end{dfn}

\begin{dfn}[Map $\mathcal{P} : \mathcal{T}_n^e \rightarrow \mathcal{T}_{n-1}^o$]
\label{defi:P}
Let $T \in \mathcal{T}_n^e$. We first define a rectangular tableau $\mathcal{P}_0(T)$, made of $n-1$ columns and $2n-1$ rows, that contains a point in its box $\left( j:\overset{\approx}{i} \right)$ for all points $p = (j:i)$ of $T$ that doesn't belong to $\text{Max}(T)$ (in particular $j < n$), where
$$\overset{\approx}{i} = \begin{cases}
i &\text{if $i <n$},\\
i-1 &\text{otherwise.}
\end{cases}$$
For the example of $T_1 \in \mathcal{T}_7^e$ depicted in Figure~\ref{fig:examplepaths}, we obtain the tableau $\mathcal{P}_0(T_1)$ of Figure~\ref{fig:P0T1}.

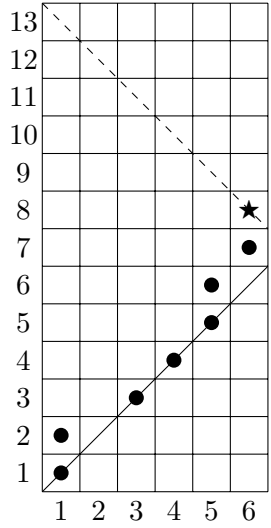
\begin{figure}[!htbp]
\begin{center}

\begin{tikzpicture}[scale=0.5]
\draw (-0.5,0.5) node[scale=1]{$1$};
\draw (-0.5,1.5) node[scale=1]{$2$};
\draw (-0.5,2.5) node[scale=1]{$3$};
\draw (-0.5,3.5) node[scale=1]{$4$};
\draw (-0.5,4.5) node[scale=1]{$5$};
\draw (-0.5,5.5) node[scale=1]{$6$};
\draw (-0.5,6.5) node[scale=1]{$7$};
\draw (-0.5,7.5) node[scale=1]{$8$};
\draw (-0.5,8.5) node[scale=1]{$9$};
\draw (-0.5,9.5) node[scale=1]{$10$};
\draw (-0.5,10.5) node[scale=1]{$11$};
\draw (-0.5,11.5) node[scale=1]{$12$};
\draw (-0.5,12.5) node[scale=1]{$13$};

\draw (0.5,-0.5) node[scale=1]{$1$};
\draw (1.5,-0.5) node[scale=1]{$2$};
\draw (2.5,-0.5) node[scale=1]{$3$};
\draw (3.5,-0.5) node[scale=1]{$4$};
\draw (4.5,-0.5) node[scale=1]{$5$};
\draw (5.5,-0.5) node[scale=1]{$6$};

\draw (0,0) grid[step=1] (6,13);
\draw (0,0) -- (6,6);
\draw [dashed] (0,13) -- (6,7);
\fill (0.5,0.5) circle (0.2);
\fill (0.5,1.5) circle (0.2);
\fill (2.5,2.5) circle (0.2);
\fill (3.5,3.5) circle (0.2);
\fill (4.5,4.5) circle (0.2);
\fill (4.5,5.5) circle (0.2);
\fill (5.5,6.5) circle (0.2);
\draw (5.5,7.5) node[scale=0.8]{$\bigstar$};

\end{tikzpicture}

\end{center}
\caption{The tableau $\mathcal{P}_0(T_1)$.}
\label{fig:P0T1}
\end{figure}

Afterwards, for $j$ from $1$ to $n-1$, if $C_j^T$ contains a point $p$ labeled with $\nu$ (respectively $\gamma$), we insert a point in the box $(j:2n-1)$ of $\mathcal{P}_0(T)$ (respectively the box $(j:j)$), following Definition~\ref{defi:insertion}.

Let $\mathcal{P}(T)$ be the tableau obtained at the end of this algorithm. It is straightforward by Definition \ref{defi:insertion} that $\mathcal{P}(T)$ is an element of $\mathcal{T}_{n-1}^o$.
\end{dfn}

For the example of $T = T_1 \in \mathcal{T}_7^e$ (see Figure~\ref{fig:examplepaths}), the tableau $\mathcal{P}(T_1) \in \mathcal{T}_6^o$ is as depicted in Figure~\ref{fig:examplepaths2}.

Let now $T_0 \in \mathcal{T}_{n-1}^o$. To construct $\mathcal{P}^{-1}(T_0)$, we consider four words $b,r,br,bg$ with which we will label the columns of $T_0$ that contain elements of $V(T_0)$ (recall that if a point of a column of $T_0$ belongs to $\mathcal{G}(T_0)$, then the other point of this column belongs to $V(T_0)$). Let
\begin{align*}
J(T_0) &= \{j \in [n-1] : \text{$C_j^{T_0}$ contains an element of $V(T_0)$}\},\\
J_g(T_0) &= \{j \in [n-1] : \text{$C_j^{T_0}$ contains an element of $\mathcal{G}(T_0)$}\}.
\end{align*}
We have $J_g(T_0) \subset J(T_0), |J(T_0)| = v(T_0),|J_g(T_0)| = g(T_0)$. Consider the set $L(T_0)$ of the functions $l : J(T_0) \rightarrow \{b,r,bg,br\}$ such that $l(j) \in \{b,r\}$ if $j \not\in J_g(T_0)$, otherwise $l(j) \in \{br,bg,r\}$. The cardinality of $L(T_0)$ is $|L(T_0)| = 2^{v(T_0)-g(T_0)} 3^{g(T_0)}$.

\begin{dfn}
\label{defi:Pmoins1}
For all $l \in L(T_0)$, we define a tableau $U^l(T_0) \in \mathcal{T}_n^e$ as follows. First consider a rectangular tableau $U_0^l(T_0)$ made of $n$ columns and $2n$ rows, that contain points in its boxes $\left( j : \overline{\overline{i}} \right)$ for all points $(j:i)$ of $T_0$ that doesn't belong to $V(T_0)$ or $\mathcal{G}(T_0)$, where $\overline{\overline{i}}$ is defined as $i$ if $i < n$, as $i+1$ otherwise. Let us set $T = U_0^l(T_0)$. For $j$ from $1$ to $n-1$, assume that $C_j^{T_0}$ contains an element of $V(T_0)$ (otherwise $C_j^T$ already contains two points and we do nothing).
\begin{itemize}
\item If no point of $C_j^{T_0}$ belongs to $\mathcal{G}(T_0)$, we have $j \in J(T_0) \backslash J_g(T_0)$, and $l(j) \in \{b,r\}$. If $l(j) = b$ (respectively $l(j) = r$), then we insert a point in the box $(j:n)$ (respectively $(j:2n)$) of $T$.
\item Otherwise $j \in J_g(T_0)$ and $l(j) \in \{br,bg,r\}$. If $l(j) = br$ (respectively $bg,r$), then we insert two points in the boxes $(j:n)$ and $(j:2n)$ (respectively $(j:n)$ and $(j:j)$, $(j:2n)$ and $(j:j)$) of $T$.
\end{itemize}
At the end of the algorithm, we insert two points in the boxes $(n:n)$ and $(n:2n)$ of $T$. By Definition \ref{defi:insertion}, the tableau $T$ has become an element of $\mathcal{T}_n^e$, that we denote by $U^l(T_0)$.
\end{dfn}

For example, the tableau $T_1 \in \mathcal{T}_7^e$ depicted in Figure~\ref{fig:examplepaths} is $U^l(\mathcal{P}(T_1))$, where recall that $\mathcal{P}(T_1) \in \mathcal{T}_6^o$ is the tableau depicted in Figure~\ref{fig:examplepaths2}, and where $l = \begin{pmatrix}
2 & 3 & 4 \\ bg & b & r
\end{pmatrix}$.

\begin{prop}
\label{prop:Pmoins1}
The set $\mathcal{P}^{-1}(T_0)$ is exactly $\{U^l(T_0) : l \in L(T_0)\}$ (so $\mathcal{P}$ is surjective).
\end{prop}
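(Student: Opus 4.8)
The plan is to establish the two inclusions $\{U^l(T_0): l\in L(T_0)\}\subseteq \mathcal{P}^{-1}(T_0)$ and $\mathcal{P}^{-1}(T_0)\subseteq \{U^l(T_0): l\in L(T_0)\}$ separately, in the same spirit as the proof of Proposition~\ref{prop:Pimoins1}. Everything rests on a single bookkeeping device supplied by Proposition~\ref{prop:TPathstationnaire}: the insertion of Definition~\ref{defi:insertion} inverts the root map, so inserting points at boxes $(j:n)$, $(j:2n)$ or $(j:j)$ for a strictly increasing sequence of columns $j$ builds up exactly the record path $B(T)=S_T(n)$, the record path $R(T)=S_T(2n)$, and points of root $j$ (which are green precisely when their column-partner lies on $B(T)$). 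Read backwards, for a given $T\in\mathcal{T}_n^e$ this recovers the decomposition of $\text{Max}(T)$ into $B(T)$, $R(T)$ and $\mathcal{G}(T)$ from the roots of the maximal points. It is this correspondence that ties the four labels $b,r,bg,br$ to the path structure of $T$.

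First I would treat the forward inclusion, fixing $l\in L(T_0)$ and proving $\mathcal{P}(U^l(T_0))=T_0$. Writing $T:=U^l(T_0)$, the dictionary above identifies the inserted points: for $j\in J(T_0)\setminus J_g(T_0)$ the single inserted point joins $B(T)$ or $R(T)$ according to $l(j)=b$ or $r$; for a green column $j\in J_g(T_0)$, the label $bg$ inserts a $B$-point together with a genuine green point, $br$ inserts a $B$-point together with an $R$-point, and $r$ inserts an $R$-point together with a non-maximal diagonal point. Erasing $\text{Max}(T)$ and projecting then shows that $\mathcal{P}_0(U^l(T_0))$ reproduces precisely the points of $T_0$ lying outside $V(T_0)\cup\mathcal{G}(T_0)$, after which re-inserting the $\nu$-points on the top row and the $\gamma$-points on the diagonal rebuilds $V(T_0)$ and $\mathcal{G}(T_0)$; hence $\mathcal{P}(U^l(T_0))=T_0$.

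For the reverse inclusion I would take an arbitrary $T\in\mathcal{P}^{-1}(T_0)$ and reconstruct a label $l_T\in L(T_0)$ by reading, column by column, the type of the maximal points of $T$: a lone $B$- or $R$-point gives $l_T(j)=b$ or $r$; a column bearing two maximal points gives $bg$ (a $B$-point with a green point) or $br$ (a $B$-point with an $R$-point); and an $R$-point sharing its column with a non-maximal diagonal point gives $r$ on a green column. I then check that $l_T$ satisfies the constraints defining $L(T_0)$ and that $U^{l_T}(T_0)=T$; the latter holds because $\mathcal{P}(T)=T_0$ already pins down the non-maximal points of $T$, while Proposition~\ref{prop:TPathstationnaire} makes the placement of the maximal points the unique inverse of the insertion rule. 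Since distinct $l$ produce distinct insertion patterns, $l\mapsto U^l(T_0)$ is injective, and the non-emptiness of $L(T_0)$ yields the surjectivity of $\mathcal{P}$.

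The hard part will be the green columns, where the three labels $br$, $bg$, $r$ must all be shown to map back to the \emph{same} green column of $T_0$. The most delicate case is a $br$-column, which carries two $\nu$-labeled points (a $B$-point and an $R$-point): under $\mathcal{P}$ one of them has to become the $V$-point of $T_0$ and the other its green point, so I must verify that the insertion resolves such a column into a $(V,\mathcal{G})$-pair rather than a single point, consistently with the green points produced in the $bg$- and $r$-cases. Equally, I must confirm that the left-to-right insertion reroutes correctly around already-occupied rows, so that the value of a root continues to label $B(T)$, $R(T)$ or $\mathcal{G}(T)$ as claimed even after earlier columns have been filled; this is exactly where the bijectivity in Proposition~\ref{prop:TPathstationnaire} carries the weight. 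Once these incidences are checked in each case, both inclusions follow and the proposition is proved.
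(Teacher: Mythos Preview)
Your proposal is correct and follows essentially the same approach as the paper: define $l_T$ by reading off, column by column, which of $B(T)$, $R(T)$, $\mathcal{G}(T)$ the maximal points of $C_j^T$ belong to, and then check both that $T=U^{l_T}(T_0)$ and that $\mathcal{P}(U^l(T_0))=T_0$ for every $l\in L(T_0)$. The paper's own proof is considerably terser (it simply declares these two verifications ``easy to check'') and does not isolate the $br$-column case you flag, but your more explicit treatment of the green columns and of the role of Proposition~\ref{prop:TPathstationnaire} is exactly the content hidden behind that phrase.
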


\begin{proof}
Let $T \in \mathcal{P}^{-1}(T_0)$. By Definition \ref{defi:P}, for all $j \in [n-1]$, we have $j \in J(T_0)$ (respectively $j \in J_g(T_0)$) if and only if one of the points of $C_j^T$ belongs to $\Max(T)$ (respectively to $\mathcal{G}(T)$). We then define $l_T \in L(T_0)$ by
$$l_T(j) = \begin{cases}
b &\text{if $C_j^T$ has a point of $B(T)$ and no point of $R(T)$ or $\mathcal{G}(T)$},\\
br &\text{if $C_j^T$ has a point of $B(T)$ and a point of $R(T)$},\\
bg &\text{if $C_j^T$ has a point of $B(T)$ and a point of $\mathcal{G}(T)$},\\
r &\text{if $C_j^T$ has a point of $R(T)$ and no point of $B(T)$}
\end{cases}$$
for all $j \in J(T_0)$. It is easy to check that $T = U^{l_T}(T_0)$, and that $\mathcal{P}(U^l(T_0)) = T_0$ for all $l \in L(T_0)$.
\end{proof}

\begin{prop}
\label{prop:sumoverPmoins1deT0}
Formula~\eqref{eq:sumoverPmoins1deT0} is true.
\end{prop}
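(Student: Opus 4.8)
The plan is to combine Proposition~\ref{prop:Pmoins1}, which identifies $\mathcal{P}^{-1}(T_0)=\{U^l(T_0):l\in L(T_0)\}$, with a careful bookkeeping of the two statistics $\text{fr}$ and $\max$ along the fiber. Writing $T=U^l(T_0)$, the left-hand side of~\eqref{eq:sumoverPmoins1deT0} becomes
$$\sum_{l\in L(T_0)}2^{\text{fr}(T)-1-\max(T)}x^{\max(T)},$$
and the heart of the argument is to show that both exponents split into independent per-column contributions indexed by $J(T_0)$.

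First I would establish the key fact that $\text{fr}$ is \emph{constant} on the fiber: $\text{fr}(U^l(T_0))=\text{fr}(T_0)+1$ for every $l\in L(T_0)$. The points of $T_0$ outside $V(T_0)\cup\mathcal{G}(T_0)$ are copied by $U_0^l$ through the shift $i\mapsto\overline{\overline{i}}$, and this shift preserves freeness: a point $(j:i)$ with $i\geq n$ is free in $T_0$ iff $i+j\geq 2n$ iff $(i+1)+j\geq 2n+1$, i.e. iff its image is free in $T$, while points with $i<n$ are free in neither. Since the $\text{v}(T_0)+\text{g}(T_0)$ points of $V(T_0)\cup\mathcal{G}(T_0)$ are free (Remark~\ref{rem:oddmaximalfreepoints}), this step yields exactly $\text{fr}(T_0)-\text{v}(T_0)-\text{g}(T_0)$ free points. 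The remaining $\text{v}(T_0)+\text{g}(T_0)+2$ points of $T$ are the inserted ones, namely the elements of $\text{Omax}(T)$ together with one extra non-maximal point (of root $j$) for each $j\in J_g(T_0)$ labelled $r$. By Remark~\ref{rem:evenmaximalfreepoints} all elements of $R(T)$ and $\mathcal{G}(T)$ and all but the first element of $B(T)$ are free, so among the $\text{Omax}$-points exactly one fails to be free, namely the first element $p_n^T$ of $B(T)$, which sits in row $n$ and hence can never be free; one checks in addition that each extra root-$j$ point lands in a free box. Thus exactly one inserted point is non-free, contributing $\text{v}(T_0)+\text{g}(T_0)+1$ free points, whence $\text{fr}(T)=\text{fr}(T_0)+1$.

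Next I would compute $\max(T)=\omax(T)-2=|\text{Omax}(T)|-2$. The two points inserted last, both in $C_n^T$, are the final elements of $B(T)$ and $R(T)$ and account for the $-2$; each column $j\in J(T_0)$ then contributes to $\text{Omax}(T)$ a number $m(l(j))$ of maximal points, where $m(b)=m(r)=1$ (a single $B$- or $R$-point, the partner root-$j$ point in the green case $l(j)=r$ being non-maximal) and $m(br)=m(bg)=2$. Hence $\max(T)=\sum_{j\in J(T_0)}m(l(j))$, a sum of independent per-column contributions. Feeding $\text{fr}(T)=\text{fr}(T_0)+1$ and this expression into the sum, pulling out $2^{\text{fr}(T_0)}$ and using that the labels are chosen independently over $J(T_0)$, I get
$$2^{\text{fr}(T_0)}\sum_{l\in L(T_0)}\prod_{j\in J(T_0)}\left(\frac{x}{2}\right)^{m(l(j))}
=2^{\text{fr}(T_0)}\prod_{j\in J(T_0)\setminus J_g(T_0)}\left[\tfrac{x}{2}+\tfrac{x}{2}\right]\prod_{j\in J_g(T_0)}\left[\left(\tfrac{x}{2}\right)^2+\left(\tfrac{x}{2}\right)^2+\tfrac{x}{2}\right].$$
Since $|J(T_0)\setminus J_g(T_0)|=\text{v}(T_0)-\text{g}(T_0)$ and $|J_g(T_0)|=\text{g}(T_0)$, each non-green factor equals $x$ and each green factor equals $x(1+x)/2$, so the product is $x^{\text{v}(T_0)-\text{g}(T_0)}\cdot\big(x(1+x)/2\big)^{\text{g}(T_0)}$, and multiplying by $2^{\text{fr}(T_0)}$ gives exactly $2^{\text{fr}(T_0)-\text{g}(T_0)}x^{\text{v}(T_0)}(1+x)^{\text{g}(T_0)}$, as desired.

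The main obstacle is the free-point lemma $\text{fr}(U^l(T_0))=\text{fr}(T_0)+1$: although the final count is clean, pinning it down requires correctly identifying the inserted points with the elements of $\text{Omax}(T)$ plus the extra root-$j$ points, confirming via Remark~\ref{rem:evenmaximalfreepoints} that the unique non-free maximal point is $p_n^T$, and verifying that the extra root-$j$ points (arising from green columns labelled $r$) land in free boxes — all of which rest on the precise behaviour of the insertion rule of Definition~\ref{defi:insertion}. The per-column decomposition of $\max(T)$ is comparatively routine once the classification of inserted points into $B(T)$, $R(T)$, $\mathcal{G}(T)$ and non-maximal root-$j$ points has been justified from the construction of $U^l$.
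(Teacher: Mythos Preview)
Your proposal is correct and follows essentially the same approach as the paper: both arguments establish $\text{fr}(U^l(T_0))=\text{fr}(T_0)+1$ for every $l$, compute $\max(T)$ from the labeling $l$ (the paper via $\max(T)=\text{v}(T_0)+\text{g}(T_0)-\delta_l$ with $\delta_l=|\{j\in J_g(T_0):l(j)=r\}|$, you via the per-column weights $m(l(j))$), and then sum over $L(T_0)$. The only difference is cosmetic: the paper groups the sum by the value of $\delta_l$ and applies the binomial theorem, whereas you factor it directly as a product over $J(T_0)$; the two computations are equivalent, and your factorization is arguably a bit cleaner.
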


\begin{proof}
Let $l \in L(T_0)$ and $T = U^l(T_0) \in \mathcal{P}^{-1}(T_0)$. Let us determine $\text{fr}(T)$ and $\max(T)$ in terms of $T_0$ and $l$. First of all, note that in general all the elements of $V(T_0)$ and $\mathcal{G}(T_0)$ are free points of $T_0$. Let $\text{fr}_0(T_0) = \text{fr}(T_0) - v(T_0) - g(T_0)$, by Definition \ref{defi:Pmoins1} it is the number of free points of $T$ that were initially in $U_0^l(T_0)$. Let $\delta_l \in [0,g(T_0)]$ be the number of elements $j \in J_g(T_0)$ such that $l(j) = r$. By construction of $T$, for all of these $\delta_l$ integers $j$, the two points of $C_j^{T_0}$ are an element of $V(T_0)$ and an element of $\mathcal{G}(T_0)$ whose insertions in $U_0^l(T_0)$ plot two free points, the first of which belongs to $R(T)$, the second of which doesn't belong to $\Max(T)$. Consequently, by Remark \ref{rem:evenmaximalfreepoints}, we have
\begin{align*}
b(T)+r(T) &= v(T_0),\\
g(T) & = g(T_0)-\delta_l,\\
\max(T) & = b(T)+r(T)+g(T)\\& = v(T_0)+g(T_0)-\delta_l,\\
\text{fr}(T) &= \text{fr}_0(T_0) + (\max(T)+1)+\delta_l  \\ & = \text{fr}(T_0)+1,
\end{align*}
so that
\begin{equation}
\label{eq:TT0}
2^{\text{fr}(T)-1-\max(T)} x^{\max(T)} = 2^{\text{fr}(T_0)-v(T_0)-g(T_0)+ \delta_l} x^{v(T_0)+g(T_0)- \delta_l}.
\end{equation}
Now, for all $\delta \in [0,g(T_0)]$, the number of elements $l \in L(T_0)$ such that $\delta_l = \delta$ is $\binom{g(T_0)}{\delta} 2^{g(T_0)-\delta} 2^{v(T_0)-g(T_0)} = \binom{g(T_0)}{\delta} 2^{v(T_0)-\delta}$, so, by Proposition \ref{prop:Pmoins1} and formula~\eqref{eq:TT0}, the sum $\sum_{T \in \mathcal{P}^{-1}(T_0)} 2^{\text{fr}(T)-1-\max(T)} x^{\max(T)}$ equals
\begin{align*}
&\sum_{\delta =0}^{g(T_0)} \binom{g(T_0)}{\delta} 2^{v(T_0)-\delta} 2^{\text{fr}(T_0)-v(T_0)-g(T_0)+ \delta} x^{v(T_0)+g(T_0)- \delta}\\& = 2^{\text{fr}(T_0)-g(T_0)} x^{v(T_0)+g(T_0)} \sum _{\delta = 0}^{g(T_0)} \binom{g(T_0)}{\delta} x^{-\delta}\\
& = 2^{\text{fr}(T_0)-g(T_0)} x^{v(T_0)+g(T_0)} \left( 1 + \frac{1}{x} \right)^{g(T_0)},
\end{align*}
hence formula~\eqref{eq:sumoverPmoins1deT0}.
\end{proof}

This proves Theorem \ref{theo:PnintermsofTno}.

\section*{Acknowledgments}
The work of EF was partially supported by the grant RSF-DFG 16-41-01013.

\end{document}